\titleformat{\section}[block]{\large\scshape\centering}{\thesection.}{1em}{} 
\titleformat{\subsection}[block]{\large}{\thesubsection.}{1em}{} 
    \newcommand{\R}{\mathds{R}}
\DeclareMathOperator*{\argmin}{argmin} %
\newtheorem{theorem}{Theorem}[section]
\newtheorem{proposition}[theorem]{Proposition}%
\newtheorem{corollary}[theorem]{Corollary}%
\newtheorem{example}{Example}%
\newtheorem{remark}{Remark}%
\newtheorem{lemma}[theorem]{Lemma}%
\newtheorem{assumption}{Assumption}
\title{\vspace{-15mm}\fontsize{20pt}{10pt}\selectfont\textbf{Static multilevel reverse Stackelberg games: existence and computations of best strategies}}
\author{
\large
\textbf{Seyfe Belete Worku$^{1}$, Birilew Belayneh Tsegaw$^{1}$ \& Semu Mitiku Kassa$^{2}$}\\[4mm] 
\textit{ $^1$ Department of Mathematics}\\[2mm]
\textit{Bahir Dar University, P.O.Box 79, Bahir Dar, Ethiopia}\\[2mm]
\textit{(e-mail: seyfv@yahoo.com, birilewb@yahoo.com). }\\[2mm]
\textit{$^2$ Department of Mathematics and Statistical Sciences}\\[2mm]
\textit{Botswana International University of Science and Technology}\\[2mm]
\textit{P/Bag 16, Palapye, Botswana} 
\textit{(e-mail: kassas@biust.ac.bw).} 
\vspace{-5mm}
}
\date{}
\begin{document}

\maketitle 

\thispagestyle{fancy} 

------------------------------------------------------------------------------------------------------------------------------\\
\textbf{Abstract:}  The multilevel reverse Stackelberg game is considered. In this game, the leader controls the outcome by announcing a strategy as a function of decision variables of the followers to his/her own decision space. Corresponding to the leader's strategy, the player in the next level  presents his/her strategy as a function of decision variables of the remaining players. This procedure is repeated until it is the turn of the bottom level player in the hierarchy, who reacts by determining his/her optimal decision variables. The structure of this game can be adopted in  decentralized multilevel decision making like resource allocation, energy market pricing, problems with hierarchical controls. In this paper conditions for existence and construction of affine leader reverse Stackelberg strategies are developed for such problems.  As an extension to the existing literature, we considered nonconvex sublevel sets of objective functions of followers. Moreover, a method to construct multiple reverse Stackelberg strategies for the leader is also presented.

\bigskip ~\\
\textit{Keywords:} Multilevel game, Hierarchical decision, Stackelberg strategy, Reverse Stackelberg strategy, Team solution, Desired equilibrium \\

------------------------------------------------------------------------------------------------------------------------------\\



\section{Introduction}\label{intro}

In control problems  where decision making is characterized by a natural hierarchy, sequential control approach can be adopted \cite{scattolini2009architectures}.  Also in model predictive controls of complex dynamical systems composed of subsystems at different layers, hierarchical control approach is essential \cite{scattolini2007hierarchical}.  In order to deal with such problems, a leader-follower solution concept which was introduced by Stackelberg \cite{von1934marktform} can be used as the framework for the resulting optimization problem. In a Stackelberg  game some decision makers are able to act prior to other players which then reacts in a rational manner, i.e. the players in Stackelberg games act in a specific order \cite{bacsar1998dynamic}. Thus, unlike Nash games where players are assumed to act simultaneously, Stackelberg games introduce a sequence of decisions between the players to characterize equilibria. Stackelberg games can be extended to multilevel games in which players  are  distributed throughout a multilevel hierarchy  \cite{Abay_Semu_2013,Abay_Semu_2016,Abay_Semu_2017,Semu_2018}.

Multilevel games are subsets of multilevel hierarchical decision problems  that deal with decentralized decision problems involving interacting players that are distributed throughout a $n$-level hierarchy, $ n\geq 2 $.	The players make their individual decisions in a sequential order, from the top $ 1^{\textrm{st}} $-level leader to the $ 2^{\textrm{nd}}$-level player  up to the bottom, $n^{\textrm{th}}$-level, player with the aim of optimizing their respective objectives. However, the class of reverse Stackelberg strategy is a solution approach where the leader formulates a strategy as a mapping from decision spaces of the followers towards his/her decision space that makes followers to behave as desired \cite{basar1979new,groot2012existence,zheng1982existence}.  Such strategy of the leader induces each of the followers to behave cooperatively in achieving team optimal solution of the next successive level player which eventually coincides with the desired solution of the leader. This strategy is also referred to as equilibrium solution \cite{bacsar1981equilibrium,basar1981stochastic}, incentive strategy \cite{ehtamo1989incentive,zheng1982existence}, reverse Stackelberg strategy  \cite{ho1981information}, inverse Stackelberg strategy \cite{olsder2009phenomena}.

Several researchers have investigated the existence and construction of reverse Stackelberg strategies for bi-level  games  \cite{groot2012existence,groot2016optimal,ho1981information,zheng1982existence,zheng1984stackelberg}. Multilevel Stackelberg games have been  applied in areas such as resource allocation \cite{Semu_2018,mitiku2007multilevel}, electricity pricing \cite{stankova2009stackelberg}, marketing channel \cite{groot2017hierarchical} and road pricing \cite{groot2014toward}.  Existence and construction of reverse Stackelberg strategies for trilevel games have been proved  in \cite{bacsar1981equilibrium,basar1981stochastic,basar1980performance} for the case where the objective functions of all the followers are quadratically convex under dynamic information, and in \cite{mizukami1989constructions} where the objectives are  strictly convex.  However, the strict convexity assumption is a strong condition which may not be satisfied by some practical problems.
Moreover, the works in \cite{bacsar1981equilibrium,basar1981stochastic} provide solutions for linear quadratic cost function structures of players, whereas the one in \cite{mizukami1989constructions} results in only a single reverse Stackelberg strategy for the leader. However, for some practical problems the leader can have infinitely many possible strategies to achieve his/her desired equilibrium. This article presents existence conditions that are applicable to a more general game setting and formulates a solution method that enables to generate multiple strategies.

In this article, we consider static multilevel reverse Stackelberg games and investigate existence of optimal reverse Stackelberg strategies for each of the players in the game. Existence of affine reverse Stackelberg strategy of the leader (and middle-level players) is established under some mild conditions on the  objective functions of the followers at the desired equilibrium.  The existence of optimal affine reverse Stackelberg strategies are presented under more relaxed conditions  than those in \cite{bacsar1981equilibrium,basar1981stochastic,basar1980performance,mizukami1989constructions}. Here, sublevel sets of objective functions of the followers at the desired equilibrium point are required to be connected which is a relatively mild condition in comparison to quadratically convex and strictly convex objective functions in the prior works.

The other main contribution of this work is that it proposes existence and construction  of   an infinite number of affine strategies that can induce the desired behavior of followers.  Such multiple optimal affine reverse Stackelberg strategies for the leader are characterized and constructed using free parameters. The construction of multiple strategies for multilevel reverse Stackelberg games enables  consideration of additional optimization criterion as a secondary objective \cite{cansever1982minimum}. This article also sheds light to the study of constrained version of the problem under consideration.

The paper is organized as follows. The formulation of  multilevel reverse Stackelberg game and definition of a corresponding strategy is included in Section 2. In Section 3, we present the first contribution of this article: the existence conditions for optimal affine reverse Stackelberg  strategy that enables  the leader to achieve his/her desired solution. The cases for convex sublevel sets and nonconvex sublevel sets are dealt with in Subsection 3.1 and Subsection 3.2 respectively.
The first part of Section 4 contains a method to construct only one  optimal affine reverse Stackelberg strategy,  with examples, for the leader. The second part starts with an example on trilevel game having infinitely many leader's reverse Stackelberg strategies. In this section, we present the second contribution of this article: Construction of multiple optimal affine reverse Stackelberg strategies of the leader. The insight brought up by the characterization of multiple optimal reverse Stackelberg strategies towards further study of the constrained is given in Section 5. Finally, the paper is concluded in Section 6 where we discuss concluding remarks and possible future  works.

\section{Preliminaries and Problem Formulation}\label{sec:prem}

In this Section we shall formulate the problem structure of static multilevel reverse Stackelberg games and present their properties. To make the presentation clear and easier to follow, we use a 3-level hierarchical static game as our main problem. However, it can be easily extended to any $n$-level hierarchical static game with one decision maker at each level of the hierarchy by repeating the procedure for the middle level decision maker any finite number of times sequentially.  That means, we use the structure of trilevel games for development of existence theorems as well as solution procedures in the subsequent sections in order to address a general $n$-level static game.

Consider a three-player static game  with three levels of hierarchy. Let the objective functions of the top level player (leader), the middle level player, and the third level player (follower)  be 	$ J_{1}(u^{1},u^{2},u^{3}),J_{2}(u^{1},u^{2},u^{3})$ and $J_{3}(u^{1},u^{2},u^{3}) $ respectively.  The variables  $ u^{1}\in \Omega_{1},u^{2}\in \Omega_{2},u^{3}\in \Omega_{3} $ are decision vectors of the leader, the middle level player and the follower respectively. The strategic representations of objective  functions  of the leader, the middle level player and the follower are	 $J_{1}(\gamma^{1},\gamma^{2},u^{3}),J_{2}(\gamma^{1},\gamma^{2},u^{3})$ and $J_{3}(\gamma^{1},\gamma^{2},u^{3}) $ respectively. The corresponding  strategies $ \gamma^{1} $ and $\gamma^{2}$ belong to admissible class of  strategy spaces $\Gamma^{1}$ and $\Gamma^{2} $ of the leader and the middle level player respectively. For ease of elaboration, we restrict the admissible strategy spaces  $\Gamma^{1}$ and $\Gamma^{2} $ to a class of affine  strategies. We denote the optimal strategies by $ \gamma^{i*}, i= 1,2 $.

Here,  the leader is only interested in controlling the state of the system by enforcing his/her optimal decisions to be adopted by the lower level players in the hierarchy. So the first step in determination of the leader's optimal reverse Stackelberg strategy is to evaluate team (desired) optimal solution of  $ J_{1} $. We assume that $ J_{1} $ has a unique team optimal point so that  the leader  seeks to achieve this unique desired equilibrium. For example,  say $(u^{1d},u^{2d},u^{3d})\in \Omega_{1}\times\Omega_{2}\times\Omega_{3}$ is the global optimum of $ J_{1} $, where $ \Omega_{1},\Omega_{2}$ and $\Omega_{3} $ are respectively decision spaces of the leader, the middle level player and the follower. The problem then becomes for the leader to determine an optimal leader function $ \gamma^{1}: \Omega_{2}\times \Omega_{3} \rightarrow \Omega_{1}  $  that leads to the desired equilibrium. The optimal reverse Stackelberg strategy of the leader $ \gamma^{1*} $ and the corresponding strategy of middle level player $ \gamma^{2*} $ achieves $ (u^{1d},u^{2d},u^{3d}) $ if it satisfies
\begin{align}		
	&( \gamma^{1*}, \gamma^{2*},u^{3d})= \argmin_{(\gamma^{1},\gamma^{2},u^{3})\in \Gamma_{1}\times\Gamma_{2}\times \Omega_{3}}J_{1}(\gamma^{1},\gamma^{2},u^{3}),\label{Ea1}\\	
	&(u^{2d},u^{3d})=\argmin_{(u^{2},u^{3})\in \Omega_{2}\times \Omega_{3}}J_{2}(\gamma^{1*},u^{2},u^{3}),\label{E3}\\
	&	u^{3d}=\argmin_{u^{3}\in \Omega_{3}} J_{3}(\gamma^{1*}, \gamma^{2*},u^{3}),\label{E4}	
\end{align}
where the realization of the strategies are
\begin{align}		
	& \gamma^{1*}(u^{2d},u^{3d}) =u^{1d}, \label{E1}\\	
	&\gamma^{2*}(u^{3d}) =u^{2d}. \label{E2}
\end{align}

If $ \gamma^{1*} $ satisfies Eqs.  (\ref{E3})-(\ref{E1})  and $ \gamma^{2*} $ satisfies Eqs. (\ref{E4}) and (\ref{E2}), then they are called optimal reverse Stackelberg strategies of the leader and the middle level player respectively.

The following proposition presents conditions that should be satisfied by the strategy $ \gamma^{1*} $ so that the leader to achieve the desired solution.
\begin{proposition}
	A fixed optimal strategy $ \gamma^{1*} $ of the leader satisfying Eqs. (\ref{E3}) and  (\ref{E1}) induces the middle level player to form a reverse Stackelberg strategy $ \gamma^{2*} $ to induce the follower.
\end{proposition}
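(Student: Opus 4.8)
The plan is to reduce the trilevel problem, once the leader has committed to $\gamma^{1*}$, to a two-level reverse Stackelberg subgame between the middle level player and the follower, and then to argue that announcing a reverse Stackelberg strategy is exactly the rational response of the middle level player in that subgame. First I would fix an optimal $\gamma^{1*}$ satisfying Eqs.~(\ref{E3}) and~(\ref{E1}) and substitute it into $J_2$, obtaining the reduced objective $\widetilde{J}_2(u^2,u^3):=J_2\bigl(\gamma^{1*}(u^2,u^3),u^2,u^3\bigr)$ on $\Omega_2\times\Omega_3$. By Eq.~(\ref{E3}) the pair $(u^{2d},u^{3d})$ is a global minimizer of $\widetilde{J}_2$, so it plays the role of the team (desired) solution for the middle level player, who now stands at the top of the remaining two-level hierarchy. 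Moreover Eq.~(\ref{E1}) guarantees that whenever the middle level player steers the subgame to $(u^{2d},u^{3d})$, the leader's realization is $\gamma^{1*}(u^{2d},u^{3d})=u^{1d}$, so the overall triple $(u^{1d},u^{2d},u^{3d})$ — the leader's desired equilibrium — is simultaneously attained.

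Next I would record that the follower, facing any announced $\gamma^2\in\Gamma^2$, reacts by solving $\argmin_{u^3\in\Omega_3}J_3(\gamma^{1*},\gamma^2,u^3)$; hence every admissible $\gamma^2$ produces a realized pair $\bigl(\gamma^2(u^3),u^3\bigr)\in\Omega_2\times\Omega_3$, and consequently $\widetilde{J}_2\bigl(\gamma^2(u^3),u^3\bigr)\ge\widetilde{J}_2(u^{2d},u^{3d})$ by the previous paragraph. Therefore no strategy of the middle level player can yield a payoff strictly below $\widetilde{J}_2(u^{2d},u^{3d})$, and the best outcome the middle level player can achieve is to realize exactly $(u^{2d},u^{3d})$. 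Because the follower moves after the middle level player's announcement, the only way to guarantee that $u^{3d}$ is chosen — and hence $u^{2d}$ realized — is to commit to a strategy $\gamma^{2*}$ with $u^{3d}\in\argmin_{u^3\in\Omega_3}J_3(\gamma^{1*},\gamma^{2*},u^3)$, i.e.\ Eq.~(\ref{E4}), and $\gamma^{2*}(u^{3d})=u^{2d}$, i.e.\ Eq.~(\ref{E2}). These are precisely the defining conditions of a reverse Stackelberg strategy of the middle level player relative to the follower, which gives the claim: the fixed $\gamma^{1*}$ forces the middle level player to form such a $\gamma^{2*}$.

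The main obstacle I anticipate is not the logical reduction itself but establishing its well-posedness, and this is where care is needed. One must verify that the reduced objective $\widetilde{J}_2$ still possesses $(u^{2d},u^{3d})$ as its \emph{unique} minimizer, so that the middle level player's desired point is unambiguous; this should follow from the standing assumption of a unique team optimum together with the consistency condition~(\ref{E1}), but it has to be argued rather than assumed. Likewise, for the statement to be fully substantive one needs a reverse Stackelberg strategy $\gamma^{2*}$ realizing $(u^{2d},u^{3d})$ to actually exist within the admissible affine class $\Gamma^2$; the present proposition only delivers the structural necessity of such a strategy, while its existence is exactly what the bilevel reverse Stackelberg existence results (developed subsequently) must supply once the reduced subgame is shown to meet their hypotheses. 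A final delicate point is keeping both optimality requirements~(\ref{E3}) and~(\ref{E1}) active at once: the former makes $(u^{2d},u^{3d})$ optimal for the middle level player given $\gamma^{1*}$, whereas the latter is what reconciles that optimum with the leader's own desired outcome, and the argument must maintain both throughout.
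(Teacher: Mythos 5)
Your proposal is correct and follows essentially the same route as the paper: fix $\gamma^{1*}$, observe via Eq.~(\ref{E3}) that $(u^{2d},u^{3d})$ is the team-optimal point of the middle level player's reduced objective $J_{2}(\gamma^{1*},u^{2},u^{3})$, and conclude that his/her rational response is to announce a $\gamma^{2*}$ satisfying Eqs.~(\ref{E4}) and~(\ref{E2}); the paper states exactly this, only more tersely. Your added remarks on well-posedness (uniqueness of the reduced minimizer and actual existence of $\gamma^{2*}$ in $\Gamma^{2}$) are appropriately deferred, matching the paper's Assumption~\ref{Assum:3} and Theorem~\ref{T16}.
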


\begin{proof}
	In Eq. (\ref{E3}) we see that the leader's optimal reverse Stackelberg  strategy $\gamma^{1*}$  leads to a team optimal solution of the middle level player as if the follower is cooperating. Thus, the middle level player wants to construct $\gamma^{2*}$ that satisfies Eqs. (\ref{E4}) and (\ref{E2}) 	to achieve his/her desired equilibrium  $ (u^{2d},u^{3d}) $. 	
\end{proof}

\begin{remark}
	In this setting, the leader forces or persuades (using incentives and punishment) the middle level player to choose  $\gamma^{2*}$ that satisfies Eqs. (\ref{E4}) and (\ref{E2}) by the announcement of $\gamma^{1*}$ satisfying Eq. (\ref{E3}). The influence of the leader on $\gamma^{2*}$ can be seen in the relation  $ \gamma^{1*}(u^{2},u^{3})=\gamma^{1*}(\gamma^{2*}(u^{3}),u^{3}) $.
\end{remark}

\section{Existence of  Optimal Affine  Reverse Stackelberg Strategy}\label{sect:existance}
In Subsection 3.1 of this section,  we present conditions under which optimal affine reverse Stackelberg strategy of the leader exists. Existence conditions for leader's reverse Stackelberg strategy in trilevel games under consideration are developed with the assumption that sublevel sets of followers objective functions are convex. This is a more general assumption relative to prior works \cite{bacsar1981equilibrium,basar1981stochastic,basar1980performance,mizukami1989constructions} where cost functions of followers are assumed to be strictly convex. In Subsection 3.2, we further relaxed existence conditions by considering nonconvex sublevel sets.
\subsection{Convex sublevel sets }
In order to prove the existence of affine reverse Stackelberg strategy for the leader, we use geometric properties of convex sublevel sets of objective functions of the middle level player and the follower at the desired solution. Let us define the level sets $ W^{2d} $ and $ W^{3d} $ corresponding to the objective functions of the followers $ J_{2} $ and $ J_{3} $ at the desired optimal solution $ (u^{1d},u^{2d},u^{3d}) $ of the leader as follows.
\begin{equation}\label{E5}
	W^{2d} = \{(u^{1},u^{2},u^{3})\in \Omega_{1}\times \Omega_{2}\times \Omega_{3}: J_{2}(u^{1},u^{2},u^{3})\leq J_{2}(u^{1d},u^{2d},u^{3d})\}.
\end{equation}	
\begin{equation}\label{E10}
	W^{3d} = \{(u^{1},u^{2},u^{3})\in \Omega_{1}\times \Omega_{2}\times \Omega_{3}: J_{3}(u^{1},u^{2},u^{3})\leq J_{3}(u^{1d},u^{2d},u^{3d})\}.
\end{equation}

The leader  should construct the optimal reverse Stackelberg strategy $ \gamma^{1*} $  such that it uniquely intersects with the level set  $ W^{2d} $ at  $ (u^{1d},u^{2d},u^{3d}) $. Under the announced leader's strategy $ \gamma^{1*} $ the middle level player constructs a strategy $ \gamma^{2*} $ such that it uniquely intersects $ 	W^{3d} $ at $ (u^{1d},u^{2d},u^{3d}) $.

We make the following assumptions on decision spaces, sublevel sets and objective functions of the followers.

\begin{assumption}\label{Assm:1}
	The decision spaces $ \Omega_{1}, \Omega_{2}, \Omega_{3} $ are convex and the sublevel sets $ W^{2d} $ and $ W^{3d} $ are connected.
\end{assumption}	
\begin{assumption}\label{Assum:2}
	The class of admissible strategies $ \Gamma^{1} $ and $ \Gamma^{2} $ satisfy
	\begin{align*}
        \Gamma^{1}&=\left\{\gamma^{1} : \Omega_{2}\times\Omega_{3}\rightarrow\Omega_{1}~\vert~~\gamma^{1} \textrm{ is affine and } \gamma^{1}(u^{2d},u^{3d}) = u^{1d}\right\},\\
        \Gamma^{2}&= \left\{\gamma^{2} : \Omega_{3}\rightarrow\Omega_{2}~\vert~~\gamma^{2} \textrm{ is affine and } \gamma^{2}(u^{3d})=u^{2d}\right\}.
	\end{align*}
\end{assumption}

The graph of a function $\gamma^{1}$, denoted by $\text{Graph}(\gamma^{1})$, is a relation that maps points in $\Omega_{2} \times \Omega_{3}$ to $\Omega_{1}$ and is represented by
\[ \text{Graph}(\gamma^{1})= \{(u^{1},u^{2},u^{3}) ~:~ u^{3}\in \Omega_{3}, u^{2} \in \Omega_{2}, u^{1} = \gamma^{1}(u^{2},u^{3})\}.\]

In order for the leader to be able to force the middle level player's decision to his/her desired equilibrium point, the leader should construct his/her affine optimal reverse Stackelberg strategy whose graph intersects with $ W^{2d} $ only at the  point $ (u^{1d},u^{2d},u^{3d}) $. That is,
\begin{center}
	$ W^{2d}\cap \text{Graph}(\gamma^{1*})= (u^{1d},u^{2d},u^{3d})$.
\end{center}

In  what follows  we consider  leader's affine  function mapping of dimension $ m_{1} $
\begin{equation}\label{E6}
	\gamma^{1}: \Omega_{2}\times \Omega_{3} \rightarrow \Omega_{1}
\end{equation}
that satisfies Eq. (\ref{E1}).

The leader's affine function $ \gamma^{1} $ is constructed in such a way that for all possible pair of actions $ (u^{2},u^{3})\in \Omega_{2} \times \Omega_{3} $ there exists   $ u^{1}\in \Omega_{1}  $ such that $ u^{1} = \gamma^{1}(u^{2},u^{3}) $. This can be accomplished by constructing an inverse affine function $ \alpha:\Omega_{1}  \rightarrow\Omega_{2}\times \Omega_{3}  $ whose graph passes through $ (u^{1d},u^{2d},u^{3d}) $.
Towards this, we construct the set of affine relations whose graph passes through $ (u^{1d},u^{2d},u^{3d}) $ as follows.

Denote the set of all affine relations of dimension $ m_{2}+ m_{3} $ in $ \Omega_{2}\times \Omega_{3}$ whose graph passes through $ (u^{1d},u^{2d},u^{3d}) $ by $ \mathcal{A}_{1} $. An element  $ \alpha_{1}\in \mathcal{A}_{1}$ satisfies
\[ \text{Graph}(\alpha_{1}) \cap W^{2d}= (u^{1d},u^{2d},u^{3d}).\]

Since $ \alpha_{1}\in \mathcal{A}_{1}$ is an affine relation having full dimension  $ m_{2}+ m_{3} $, for all  $ (u^{2},u^{3})\in \Omega_{2}\times \Omega_{3} $, there exists $ u^{1}\in \Omega_{1} $ such that $ \alpha_{1}(u^{1})= (u^{2},u^{3}) $. Thus, for every  $ \alpha_{1}\in \mathcal{A}_{1}$ we have  $ \alpha_{1}(\Omega_{1}) = \Omega_{2}\times\Omega_{3}$. Then the candidate leader function is characterized by $ \gamma^{1}:= (\alpha_{1})^{-1} $. Next, let us consider the set of affine relations whose graph passes through  $ (u^{1d},u^{2d},u^{3d}) $ and  lie on the supporting hyperplane $ \Pi_{W^{2d}} $. This set is denoted by $ \mathcal{A}^{\Pi_{W^{2d}}}_{1}$ and is defined as

\[\mathcal{A}^{\Pi_{W^{2d}}}_{1}:= \{\alpha_{1}\in \mathcal{A}_{1}: \alpha_{1} \subseteq\Pi_{W^{2d}}\}.\]

Now, we state the following two results from \cite{luenberger1969optimization} that we shall use them in the subsequent analysis.

\begin{lemma}[Geometric  Hahn-Banach Theorem,  Luenberger 1969]\label{T1}
	Let $ K $ be a convex set having a nonempty interior in a real normed linear vector space $ X $. Suppose that $ V $ is a linear variety in $ X $ containing no interior points of $ K $. Then there is a closed hyperplane in $ X $ containing $ V $ but containing no interior points of $ K $.
\end{lemma}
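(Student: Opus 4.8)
The plan is to obtain this geometric statement from the analytic Hahn--Banach extension theorem via the Minkowski gauge of $K$; this is the classical argument (Luenberger 1969), so I would mainly assemble the standard pieces. First I would normalize the configuration: since translations preserve convexity, interiors, linear varieties and closed hyperplanes, translate so that a fixed interior point of $K$ is moved to the origin. Then $0\in\operatorname{int}K$, and the translated variety $V_{0}$ still contains no interior point of $K$, so in particular $0\notin V_{0}$; writing $M=V_{0}-V_{0}$ for the (translation-invariant) direction subspace, we have $V_{0}=v_{0}+M$ for any $v_{0}\in V_{0}$, with $v_{0}\notin M$ (otherwise $V_{0}=M\ni 0$).

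Next I would introduce the Minkowski functional $p(x)=\inf\{t>0 : x\in tK\}$. Since $0\in\operatorname{int}K$, $p$ is finite on all of $X$, positively homogeneous and subadditive, hence a sublinear functional; and using convexity of $K$ (segments from $0$ to a point of $K$ lie in the interior) one checks the identity $\operatorname{int}K=\{x : p(x)<1\}$. In particular $p(v)\ge 1$ for every $v\in V_{0}$.

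Then I would construct the functional to be extended. On the subspace $N=M\oplus\R v_{0}$ define the linear functional $f(m+\alpha v_{0})=\alpha$; it is well defined because $v_{0}\notin M$, it equals $1$ on $V_{0}$ and vanishes on $M$. The domination estimate $f\le p$ on $N$ follows by homogeneity: for $\alpha>0$ we have $m+\alpha v_{0}=\alpha\bigl(v_{0}+m/\alpha\bigr)$ with $v_{0}+m/\alpha\in V_{0}$, so $p(m+\alpha v_{0})=\alpha\, p(v_{0}+m/\alpha)\ge\alpha=f(m+\alpha v_{0})$, while for $\alpha\le 0$ it is trivial since $f\le 0\le p$. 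Applying the analytic Hahn--Banach theorem extends $f$ to a linear functional $F$ on $X$ with $F\le p$ everywhere. The set $H=\{x : F(x)=1\}$ is then a hyperplane containing $V_{0}$, and it contains no interior point of $K$ because $F(x)\le p(x)<1$ whenever $x\in\operatorname{int}K$. Closedness of $H$ follows from continuity of $F$: a ball $B(0,r)\subseteq K$ gives $p(x)\le\|x\|/r$, hence $|F(x)|\le\|x\|/r$. Translating back produces the required closed hyperplane through the original $V$.

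The computations with the gauge $p$ are routine; the step carrying the actual content --- and the one I would be most careful with --- is the inequality $f\le p$ on $N$, since it is exactly the hypothesis needed to invoke the analytic Hahn--Banach theorem, together with the observation that $0\notin V_{0}$, which makes the auxiliary functional $f$ well defined.
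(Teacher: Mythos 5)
Your argument is correct and complete: the translation to put an interior point of $K$ at the origin, the Minkowski gauge $p$ with $\operatorname{int}K=\{p<1\}$, the functional $f(m+\alpha v_{0})=\alpha$ dominated by $p$, the analytic Hahn--Banach extension, and the continuity bound $|F(x)|\le\|x\|/r$ giving closedness are exactly the classical proof. The paper itself does not prove this lemma but merely cites Theorem~1 of Luenberger (1969), and your proof is essentially the standard argument given in that reference, so there is nothing to reconcile.
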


\begin{proof}
	See Theorem 1 in \cite{luenberger1969optimization}.
\end{proof}

\begin{lemma}[Support Theorem, Luenberger 1969]\label{T2}
	If $ x $ is not an interior point of a convex set $ K $ which contains interior points, there is a closed hyperplane $ \Pi $  containing $ x  $ such that $ K $ lies on one side of $ \Pi $.
\end{lemma}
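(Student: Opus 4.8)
The plan is to obtain Lemma~\ref{T2} as the degenerate case of the Geometric Hahn--Banach Theorem (Lemma~\ref{T1}), taking the linear variety to be the single point $x$. First I would note that $V:=\{x\}$ is a linear variety in $X$, since it is the translate by $x$ of the trivial subspace $\{0\}$. By hypothesis $K$ is convex with nonempty interior, and $x$ is \emph{not} an interior point of $K$, so $V$ contains no interior point of $K$. Applying Lemma~\ref{T1} then produces a closed hyperplane $\Pi\subseteq X$ with $x\in\Pi$ and $\Pi\cap\mathrm{int}(K)=\emptyset$.

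The second step is to convert ``$\Pi$ contains no interior point of $K$'' into ``$K$ lies on one side of $\Pi$''. Write $\Pi=\{y\in X: f(y)=c\}$ for a nonzero continuous linear functional $f$ and a scalar $c$, so that $X\setminus\Pi$ is the disjoint union of the two open half-spaces $H_-=\{f<c\}$ and $H_+=\{f>c\}$. The interior of a convex set is convex, hence connected; $\mathrm{int}(K)$ is nonempty and, by the first step, disjoint from $\Pi$, so it lies entirely in one of $H_-,H_+$, say $\mathrm{int}(K)\subseteq H_-$. Finally I would pass from the interior to all of $K$ via the standard fact that a convex set with nonempty interior satisfies $K\subseteq\overline{\mathrm{int}(K)}$; taking closures gives $K\subseteq\overline{\mathrm{int}(K)}\subseteq\overline{H_-}=\{f\le c\}$, which is precisely the statement that $K$ lies on one side of the closed hyperplane $\Pi$ through $x$.

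I do not anticipate a genuine obstacle: the three auxiliary facts invoked (that a singleton is a linear variety, that the interior of a convex set is convex, and that $K\subseteq\overline{\mathrm{int}(K)}$ for convex $K$ with interior points) are routine, and all of the real content is packaged inside Lemma~\ref{T1}. If anything, the only point requiring a moment's care is recognizing that separating a point from $K$ is exactly the ``$V=\{x\}$'' instance of separating a linear variety, after which the argument is immediate.
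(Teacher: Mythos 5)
Your argument is correct. Note that the paper does not actually prove Lemma~\ref{T2}: its ``proof'' is only a citation to Theorem~2 of Luenberger (1969). Your derivation --- specializing Lemma~\ref{T1} to the linear variety $V=\{x\}$ (the translate of the trivial subspace), obtaining a closed hyperplane $\Pi$ through $x$ that misses $\mathrm{int}(K)$, and then upgrading this to ``$K$ lies on one side of $\Pi$'' via the connectedness of the convex set $\mathrm{int}(K)$ together with the inclusion $K\subseteq\overline{\mathrm{int}(K)}$ --- is the standard route, and it is essentially how Luenberger himself deduces the support theorem from the geometric Hahn--Banach theorem; so in effect you have made the cited argument self-contained rather than taken a genuinely different path. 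The auxiliary facts you invoke all hold in a normed space (a closed hyperplane is a level set $\{f=c\}$ of a nonzero continuous linear functional, so its complement is two disjoint open half-spaces; the interior of a convex set is convex, hence connected; and a convex set with nonempty interior is contained in the closure of its interior), and each is used correctly, so there is no gap.
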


\begin{proof}
	See Theorem 2 in \cite{luenberger1969optimization}.
\end{proof}

Now,  to utilize these concepts in our context we state the following corollaries, that follow from Lemma \ref{T1} and Lemma \ref{T2}, which can be used in the subsequent analysis.

\begin{corollary}\label{T3}
	Assume that $ W^{2d} $ is convex and  locally strictly convex at the point $(u^{1d},u^{2d},u^{3d})$. Let $ \Omega_1=\R^{m_1}, \Omega_2 = \R^{m_2}, \Omega_3 = \R^{m_3}$ and let $ \alpha_1 \in \mathcal{A}_1$ be any affine function  such that $\text{Graph} (\alpha_{1})\cap W^{2d}= (u^{1d},u^{2d},u^{3d}) $. Then $\alpha_{1}$ lies on the hyperplane $\Pi_{W^{2d}}$ supporting $W^{2d}$ at the point $(u^{1d},u^{2d},u^{3d})$.
\end{corollary}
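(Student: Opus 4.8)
The plan is to combine the two quoted results from Luenberger: the Support Theorem (Lemma~\ref{T2}) to produce the supporting hyperplane $\Pi_{W^{2d}}$, and the Geometric Hahn--Banach Theorem (Lemma~\ref{T1}) to force the affine variety $\text{Graph}(\alpha_{1})$ into a supporting hyperplane of $W^{2d}$, after which one identifies that hyperplane with $\Pi_{W^{2d}}$ using local strict convexity. The first step is to note that $x^{d}:=(u^{1d},u^{2d},u^{3d})$ is a boundary point of $W^{2d}$, not an interior point: if some ball $B(x^{d},\varepsilon)$ were contained in $W^{2d}$, then, since $\text{Graph}(\alpha_{1})$ is an affine subset of $\R^{m_{1}+m_{2}+m_{3}}$ of dimension $m_{2}+m_{3}\ge 1$ passing through $x^{d}$, the intersection $\text{Graph}(\alpha_{1})\cap B(x^{d},\varepsilon)$ would contain a nondegenerate segment, contradicting $\text{Graph}(\alpha_{1})\cap W^{2d}=\{x^{d}\}$. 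In particular $\text{Graph}(\alpha_{1})$, being a linear variety through $x^{d}$, contains no interior point of $W^{2d}$ at all, because its only point of $W^{2d}$ is the boundary point $x^{d}$.

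Next I would invoke Lemma~\ref{T2} for the convex set $W^{2d}$ (which is closed and has nonempty interior) at the non-interior point $x^{d}$ to obtain a closed hyperplane through $x^{d}$ having $W^{2d}$ on one side; by the local strict convexity of $W^{2d}$ at $x^{d}$ this supporting hyperplane is the only one at $x^{d}$, and it is $\Pi_{W^{2d}}$. Then I would apply Lemma~\ref{T1} with $K=W^{2d}$ and $V=\text{Graph}(\alpha_{1})$: there is a closed hyperplane $\widetilde{\Pi}$ with $\text{Graph}(\alpha_{1})\subseteq\widetilde{\Pi}$ and $\widetilde{\Pi}$ containing no interior point of $W^{2d}$. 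To upgrade ``disjoint from the interior'' to ``supporting'', suppose $W^{2d}$ met both open half-spaces cut out by $\widetilde{\Pi}$; joining an interior point of $W^{2d}$ on one side to a point of $W^{2d}$ on the other side yields a segment that, except possibly at its far endpoint, lies in the interior of $W^{2d}$ and crosses $\widetilde{\Pi}$, giving an interior point of $W^{2d}$ on $\widetilde{\Pi}$ --- a contradiction. Hence $W^{2d}$ lies in one closed half-space of $\widetilde{\Pi}$, and since $x^{d}\in\text{Graph}(\alpha_{1})\cap W^{2d}\subseteq\widetilde{\Pi}\cap W^{2d}$, the hyperplane $\widetilde{\Pi}$ supports $W^{2d}$ at $x^{d}$.

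Finally, uniqueness of the supporting hyperplane at $x^{d}$ forces $\widetilde{\Pi}=\Pi_{W^{2d}}$, so $\text{Graph}(\alpha_{1})\subseteq\Pi_{W^{2d}}$, i.e. $\alpha_{1}$ lies on $\Pi_{W^{2d}}$. I expect the main obstacle to be precisely this identification step: one has to extract genuine uniqueness of the supporting hyperplane at $x^{d}$ from the hypothesis of local strict convexity (being careful, since in general strict convexity of the boundary near $x^{d}$ and smoothness of the boundary at $x^{d}$ are distinct properties), together with the minor but necessary technical fact that a hyperplane missing the interior of a convex body with nonempty interior is automatically a supporting hyperplane. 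One also implicitly uses that $W^{2d}$ is closed with nonempty interior, which should be recorded among the standing hypotheses.
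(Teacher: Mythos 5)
Your overall route is the same as the paper's: apply Lemma~\ref{T1} with $K=W^{2d}$ and $V=\text{Graph}(\alpha_{1})$ to embed the graph in a closed hyperplane missing the interior of $W^{2d}$, then argue that this hyperplane supports $W^{2d}$ at $(u^{1d},u^{2d},u^{3d})$. In two places you are actually more careful than the paper: you justify that $(u^{1d},u^{2d},u^{3d})$ is not an interior point (the segment-in-a-ball argument, which the paper silently assumes when it asserts that $\alpha_{1}$ contains no interior points of $W^{2d}$), and you upgrade ``misses the interior'' to ``supporting'' by an explicit segment-crossing argument, where the paper only appeals to the definition and Lemma~\ref{T2}. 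You are also right that nonempty interior of $W^{2d}$ must be added to the standing hypotheses for either lemma to apply.

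The genuine gap is your identification step: you claim that local strict convexity of $W^{2d}$ at $(u^{1d},u^{2d},u^{3d})$ makes the supporting hyperplane at that point unique, and then conclude $\widetilde{\Pi}=\Pi_{W^{2d}}$. That implication is false: local strict convexity excludes flat pieces of the boundary (it forces any supporting hyperplane to meet $W^{2d}$ only at the point --- this is exactly how the paper uses it, and what Corollary~\ref{T4} records), but it does not exclude corners. For example, the set $\{(x,y)\in\R^{2}:\ |x|+x^{2}-y\le 0\}$ is strictly convex yet admits a continuum of supporting lines at the origin; uniqueness of the supporting hyperplane corresponds to differentiability of the boundary, which is not assumed in this corollary and only enters later, in Theorem~\ref{T41}, where Fr\'echet differentiability of $J_{2}$ pins down $\Pi_{W^{2d}}$ via $\nabla J_{2}$. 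The repair is to drop the detour rather than fix it: in the paper's reading, $\Pi_{W^{2d}}$ is simply the supporting hyperplane produced by the Hahn--Banach step, so the corollary asserts that $\alpha_{1}$ lies on \emph{some} hyperplane supporting $W^{2d}$ at $(u^{1d},u^{2d},u^{3d})$ --- and that much your second and third steps already prove without any uniqueness claim.
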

\begin{proof}
	Product of Euclidean spaces $ \Omega_{1}=\R^{m_{1}},  \Omega_{2}=\R^{m_{2}}, \Omega_{3}=\R^{m_{3}} $ is a normed linear space. Hence, $  \Omega_{1}\times \Omega_{2}\times \Omega_{3} $ is a normed linear space and the sublevel set $ W^{2d}\subset \Omega_{1}\times \Omega_{2}\times \Omega_{3} $.
	Moreover, as $ \text{Graph} (\alpha_{1})\cap W^{2d}= (u^{1d},u^{2d},u^{3d}) $, then $ \alpha_{1} $ does not contain interior point of $ W^{2d} $. Then by Lemma \ref{T1} there exists a hyperplane $ \Pi_{W^{2d}} $ through $ (u^{1d},u^{2d},u^{3d}) $ containing affine (linear variety)  $ \alpha_{1} $. Since $  W^{2d} $ is locally strictly convex at  	$ (u^{1d},u^{2d},u^{3d}) $, then  $ W^{2d}\cap \Pi_{ W^{2d}}(u^{1d},u^{2d},u^{3d})= (u^{1d},u^{2d},u^{3d})$. Hence, $ \Pi_{W^{2d}} $ supports  $ W^{2d} $ at $ (u^{1d},u^{2d},u^{3d}) $ follows from the definition of the supporting hyperplane and Lemma \ref{T2}.
\end{proof}

\begin{corollary}\label{T4}
	Assume that $ W^{2d} $ is convex and  locally strictly convex at the point $ (u^{1d},u^{2d},u^{3d}) $. Let $ \Omega_1=\R^{m_1}, \Omega_2 = \R^{m_2}, \Omega_3 = \R^{m_3}$. Then a supporting hyperplane $\Pi_{ W^{2d}}(u^{1d},u^{2d},u^{3d})$  intersects with  $ W^{2d} $ only at the point $ (u^{1d},u^{2d},u^{3d}) $.
\end{corollary}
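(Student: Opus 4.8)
The statement asserts that under the hypotheses (local strict convexity of $W^{2d}$ at the desired point, Euclidean decision spaces), the supporting hyperplane $\Pi_{W^{2d}}(u^{1d},u^{2d},u^{3d})$ meets $W^{2d}$ only at $(u^{1d},u^{2d},u^{3d})$. Since Corollary \ref{T3} is already available, the cleanest route is to reduce to it. The plan is to take the supporting hyperplane $\Pi_{W^{2d}}:=\Pi_{W^{2d}}(u^{1d},u^{2d},u^{3d})$ and produce, inside it, an affine relation $\alpha_1$ of full dimension $m_2+m_3$ passing through $(u^{1d},u^{2d},u^{3d})$. Because a hyperplane in $\R^{m_1+m_2+m_3}$ has dimension $m_1+m_2+m_3-1$, which is at least $m_2+m_3$ as soon as $m_1\ge 1$, such an $\alpha_1$ exists and can be chosen so that $\text{Graph}(\alpha_1)\subseteq \Pi_{W^{2d}}$; i.e. $\alpha_1\in\mathcal{A}_1^{\Pi_{W^{2d}}}$.

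Second, I would argue by contradiction: suppose $W^{2d}\cap\Pi_{W^{2d}}$ contains a point $p\neq(u^{1d},u^{2d},u^{3d})$. I would arrange $\alpha_1$ so that $\text{Graph}(\alpha_1)$ passes through both $(u^{1d},u^{2d},u^{3d})$ and $p$ — this is possible precisely because both points lie in the hyperplane $\Pi_{W^{2d}}$ and $\text{Graph}(\alpha_1)$ can be taken to be any $(m_2+m_3)$-dimensional affine subspace of that hyperplane through the desired point; a dimension count shows there is enough room to also contain $p$ (an affine flat of dimension $m_2+m_3$ inside one of dimension $m_1+m_2+m_3-1$ can be steered through a prescribed extra point when $m_1\ge 2$, and when $m_1=1$ the hyperplane \emph{is} the graph, so it automatically contains $p$). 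Then $\text{Graph}(\alpha_1)\cap W^{2d}\supseteq\{(u^{1d},u^{2d},u^{3d}),p\}$, so it is not the singleton, contradicting the conclusion of Corollary \ref{T3} applied to this $\alpha_1$ — or, more directly, contradicting local strict convexity, which by definition forces $W^{2d}\cap\Pi_{W^{2d}}$ to be exactly $\{(u^{1d},u^{2d},u^{3d})\}$ in a neighborhood, and then convexity of $W^{2d}$ propagates this globally along the segment from $(u^{1d},u^{2d},u^{3d})$ to $p$.

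Alternatively — and this is probably the most economical write-up — I would not reduce to Corollary \ref{T3} at all but argue intrinsically: local strict convexity at $(u^{1d},u^{2d},u^{3d})$ means there is a neighborhood $N$ with $W^{2d}\cap\Pi_{W^{2d}}\cap N=\{(u^{1d},u^{2d},u^{3d})\}$; if some $p\ne(u^{1d},u^{2d},u^{3d})$ lay in $W^{2d}\cap\Pi_{W^{2d}}$, then by convexity of $W^{2d}$ the whole segment $[(u^{1d},u^{2d},u^{3d}),p]$ lies in $W^{2d}$, and since $\Pi_{W^{2d}}$ is affine that segment also lies in $\Pi_{W^{2d}}$; its initial portion lies in $N$ and consists of points of $W^{2d}\cap\Pi_{W^{2d}}$ other than $(u^{1d},u^{2d},u^{3d})$, contradicting the local statement. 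The main obstacle is essentially bookkeeping: making the notion ``locally strictly convex at a point'' precise enough that the neighborhood argument is rigorous, and making sure the supporting hyperplane $\Pi_{W^{2d}}$ referenced here is the same one whose existence was established in Corollary \ref{T3} (so one should either cite that corollary for existence of $\Pi_{W^{2d}}$ or invoke Lemma \ref{T2} directly). No hard analysis is needed; the content is the convexity-plus-locality synthesis.
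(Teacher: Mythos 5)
Your preferred (second) argument is correct and in substance close to the paper's own proof, but it is actually more complete. The paper proves Corollary~\ref{T4} by citing Corollary~\ref{T3} for the existence of the supporting hyperplane and then simply asserting, from local strict convexity together with Lemma~\ref{T2}, that the contact point is unique; the passage from ``locally unique contact'' to ``globally unique contact'' is left implicit there. Your segment argument supplies exactly that missing step: if some $p\neq(u^{1d},u^{2d},u^{3d})$ lay in $W^{2d}\cap\Pi_{W^{2d}}$, convexity of $W^{2d}$ places the whole segment $[(u^{1d},u^{2d},u^{3d}),p]$ in $W^{2d}$, affinity of $\Pi_{W^{2d}}$ places it in the hyperplane, and its initial portion then contradicts local strict convexity at the desired point. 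This is the right way to make the corollary rigorous, and it visibly uses the global convexity hypothesis on $W^{2d}$, which the paper's one-line justification does not. One caution about your first variant: the appeal to Corollary~\ref{T3} there is backwards. Corollary~\ref{T3} takes the singleton intersection $\text{Graph}(\alpha_{1})\cap W^{2d}=\{(u^{1d},u^{2d},u^{3d})\}$ as a \emph{hypothesis} and concludes that $\alpha_{1}$ lies in the supporting hyperplane; it does not assert that every $(m_{2}+m_{3})$-dimensional flat inside the hyperplane meets $W^{2d}$ only at the point, so steering such a flat through $p$ yields no contradiction with \ref{T3} itself --- the contradiction comes only from local strict convexity plus convexity, i.e., you are back to the direct argument. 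So drop the reduction and write up the intrinsic proof, making precise (as you note) what ``locally strictly convex at a point'' means, namely that in some neighborhood the supporting hyperplane meets $W^{2d}$ only at that point.
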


\begin{proof}
	We have proved  in Corollary \ref{T3} the existence of the supporting hyperplane $\Pi_{ W^{2d}}(u^{1d},u^{2d},u^{3d})$ to $ W^{2d} $ at $(u^{1d},u^{2d},u^{3d})$. Then by the definition of a supporting hyperplane, $(u^{1d},u^{2d},u^{3d}) $ is not in an interior point of  $ W^{2d} $.  Since $ W^{2d} $ is locally strictly convex at $(u^{1d},u^{2d},u^{3d})$, applying Lemma \ref{T2} we see that  $\Pi_{ W^{2d}}$ supports $ W^{2d} $ only at $(u^{1d},u^{2d},u^{3d})$.
\end{proof}

In the following theorem we prove the existence of leader's affine reverse Stackelberg strategy provided the assumptions are satisfied. To consider the trilevel problem in its entirety (Eqs. (\ref{Ea1})-(\ref{E2})) we make the following assumption.

\begin{assumption}\label{Assum:3}
	If the mapping (\ref{E6}) is announced by the leader as an optimal reverse Stackelberg strategy to the followers, then the middle level player can find a strategy $ \gamma^{2}\in \Gamma^{2} $ that satisfies Eq. (\ref{E3}).
\end{assumption}	

This assumption states that in the trilevel hierarchical Stackelberg game, for each action chosen by the leader, the middle level player always has a room to respond optimally in the required direction. The conditions for which this assumption is satisfied are given in Theorem \ref{T16}. But first we shall show in the next arguments that the leader can achieve his/her desired equilibrium solution if he/she announces $ \gamma^{1*} $ as optimal reverse Stackelberg strategy.

\begin{theorem}\label{T41}
	Suppose that  $ W^{2d} $ is  convex and locally strictly convex at the point $ (u^{1d},u^{2d},u^{3d}) $, Assumption 3 is satisfied, $ J_{2}(u^{1},u^{2},u^{3}) $ is Fr\'{e}chet differentiable at $ (u^{1d},u^{2d},u^{3d}) $ and $ \Omega_{1}= \R^{m_{1}}, \Omega_{2}= \R^{m_{2}}, \Omega_{3}= \R^{m_{3}}$. If $ \nabla_{u^{1}}J_{2}(u^{1d},u^{2d},u^{3d})\neq 0 $, then, there exists leader's affine map $ \gamma^{1}: \Omega_{2}\times\Omega_{3} \rightarrow \Omega_{1}$ that realizes the desired equilibrium $ (u^{1d},u^{2d},u^{3d}) $.
\end{theorem}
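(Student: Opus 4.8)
The plan is to realize $\gamma^{1*}$ as an affine map whose graph sits inside the supporting hyperplane of $W^{2d}$ at the desired point $p:=(u^{1d},u^{2d},u^{3d})$, so that local strict convexity forces this graph to meet $W^{2d}$ only at $p$; the hypothesis $\nabla_{u^{1}}J_{2}(p)\neq 0$ is exactly what lets the graph of an $\Omega_{1}$-valued affine map fit inside a codimension-one hyperplane. First I would record two preliminary facts. The point $p$ is not interior to $W^{2d}$: otherwise $J_{2}\le J_{2}(p)$ would hold on a whole neighbourhood of $p$, and Fr\'echet differentiability would force $\nabla J_{2}(p)=0$, contradicting $\nabla_{u^{1}}J_{2}(p)\neq 0$. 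Moreover $W^{2d}$ has nonempty interior, since along a descent direction of $J_{2}$ at $p$ the value of $J_{2}$ drops strictly below $J_{2}(p)$ and, by continuity, a whole ball around such a point lies in $W^{2d}$. Hence Lemma~\ref{T2} and Corollaries~\ref{T3}--\ref{T4} are applicable at $p$.

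Next I would pin down the supporting hyperplane. Put $\Pi:=\{x:\langle\nabla J_{2}(p),\,x-p\rangle=0\}$, which is a genuine hyperplane because $\nabla J_{2}(p)\neq 0$. For any $q\in W^{2d}$, convexity of $W^{2d}$ gives $[p,q]\subseteq W^{2d}$, so $J_{2}\big(p+t(q-p)\big)\le J_{2}(p)$ for $t\in[0,1]$, and letting $t\downarrow 0$ yields $\langle\nabla J_{2}(p),\,q-p\rangle\le 0$; thus $W^{2d}$ lies on one side of $\Pi$, i.e.\ $\Pi$ supports $W^{2d}$ at $p$. Since $W^{2d}$ is locally strictly convex at $p$, Corollary~\ref{T4} (equivalently, Lemma~\ref{T2}) gives $\Pi\cap W^{2d}=\{p\}$.

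Then I would construct $\gamma^{1*}$ explicitly. Look for $\gamma^{1}(u^{2},u^{3})=u^{1d}+A(u^{2}-u^{2d})+B(u^{3}-u^{3d})$ with $A\in\R^{m_{1}\times m_{2}}$ and $B\in\R^{m_{1}\times m_{3}}$. Since $\Omega_{2}=\R^{m_{2}}$ and $\Omega_{3}=\R^{m_{3}}$, the condition $\text{Graph}(\gamma^{1})\subseteq\Pi$ is equivalent to $A^{\top}\nabla_{u^{1}}J_{2}(p)=-\nabla_{u^{2}}J_{2}(p)$ and $B^{\top}\nabla_{u^{1}}J_{2}(p)=-\nabla_{u^{3}}J_{2}(p)$. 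Because $\nabla_{u^{1}}J_{2}(p)\neq 0$, each of these is solvable (one scalar linear equation per column of $A$, resp.\ $B$); for instance one may take $A=-\|\nabla_{u^{1}}J_{2}(p)\|^{-2}\,\nabla_{u^{1}}J_{2}(p)\,\nabla_{u^{2}}J_{2}(p)^{\top}$ and $B$ analogously. The resulting $\gamma^{1}=:\gamma^{1*}$ is affine with $\gamma^{1*}(u^{2d},u^{3d})=u^{1d}$, so $\gamma^{1*}\in\Gamma^{1}$ by Assumption~\ref{Assum:2}; and $\text{Graph}(\gamma^{1*})\subseteq\Pi$ together with the previous step gives $\text{Graph}(\gamma^{1*})\cap W^{2d}\subseteq\Pi\cap W^{2d}=\{p\}$, while the reverse inclusion is immediate, so $\text{Graph}(\gamma^{1*})\cap W^{2d}=\{p\}$.

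Finally I would close the argument. For any $(u^{2},u^{3})\neq(u^{2d},u^{3d})$ the point $(\gamma^{1*}(u^{2},u^{3}),u^{2},u^{3})$ lies on $\text{Graph}(\gamma^{1*})$ but outside $W^{2d}$, so $J_{2}(\gamma^{1*},u^{2},u^{3})>J_{2}(p)$; hence $(u^{2d},u^{3d})=\argmin_{(u^{2},u^{3})\in\Omega_{2}\times\Omega_{3}}J_{2}(\gamma^{1*},u^{2},u^{3})$, which is Eq.~(\ref{E3}), and Eq.~(\ref{E1}) holds by construction. By the proposition established in Section~\ref{sec:prem}, a strategy $\gamma^{1*}$ satisfying Eqs.~(\ref{E3}) and (\ref{E1}) induces the middle-level player to form a reverse Stackelberg strategy $\gamma^{2*}$, which Assumption~\ref{Assum:3} guarantees to exist in $\Gamma^{2}$; therefore the desired equilibrium $(u^{1d},u^{2d},u^{3d})$ is realized. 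The main obstacle I expect is the interplay of the two middle steps: checking that the abstract supporting hyperplane furnished by the Hahn--Banach/support theorems is the concrete gradient hyperplane $\Pi$ (uniqueness of support at a differentiable, locally strictly convex boundary point), and confirming that the ``vertical'' graph of an $\Omega_{1}$-valued affine map can be housed inside the codimension-one $\Pi$ --- which is exactly the role of $\nabla_{u^{1}}J_{2}(p)\neq 0$, as it prevents $\Pi$ from being a cylinder in the $u^{1}$-direction.
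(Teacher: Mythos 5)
Your proof is correct, and it rests on the same geometric idea as the paper's: put the graph of $\gamma^{1}$ inside the supporting hyperplane of $W^{2d}$ at $(u^{1d},u^{2d},u^{3d})$, use local strict convexity to get a unique intersection point, and use $\nabla_{u^{1}}J_{2}\neq 0$ to guarantee that this hyperplane can house the graph of an $\Omega_{1}$-valued map defined on all of $\Omega_{2}\times\Omega_{3}$. The execution differs in two ways that are worth noting. The paper argues existentially: it invokes Corollaries \ref{T3} and \ref{T4} to obtain an affine relation $\alpha_{1}^{\Pi_{W^{2d}}}$ lying on the supporting hyperplane, shows via a projection argument that $\alpha_{1}^{\Pi_{W^{2d}}}(\Omega_{1})=\Omega_{2}\times\Omega_{3}$, and then sets $\gamma^{1}:=(\alpha_{1}^{\Pi_{W^{2d}}})^{-1}$. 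You instead verify the supporting property of the gradient hyperplane directly from convexity plus Fr\'{e}chet differentiability (the one-sided inequality $\langle\nabla J_{2}(p),q-p\rangle\le 0$ for $q\in W^{2d}$), and you construct $\gamma^{1}$ explicitly by solving $A^{\top}\nabla_{u^{1}}J_{2}(p)=-\nabla_{u^{2}}J_{2}(p)$, $B^{\top}\nabla_{u^{1}}J_{2}(p)=-\nabla_{u^{3}}J_{2}(p)$ with a rank-one choice of $A$ and $B$ --- which is in effect the formula the paper only writes down later in Eq.~(\ref{EEx1}) of Subsection \ref{subsect:char1}. Your route is more elementary (it bypasses the Hahn--Banach machinery entirely) and also more careful on two preconditions the paper leaves implicit: that $(u^{1d},u^{2d},u^{3d})$ is not an interior point of $W^{2d}$ (it would force $\nabla J_{2}(p)=0$) and that $W^{2d}$ has nonempty interior, both needed for the support-theorem corollaries to apply; the only mild tacit assumption in your version is continuity of $J_{2}$ near $p$ for the nonempty-interior step, which the paper needs as well. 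The paper's more abstract formulation, on the other hand, is what it later recycles verbatim for the middle-level player (Theorem \ref{T16}) and for the nonconvex-hull extensions, so each presentation has its use.
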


\begin{proof}
	Since the mapping $ \gamma^{1}$ belongs to $\Gamma^{1}$, it satisfies Eq. (\ref{E1}).  In view of Assumption \ref{Assum:3} the problem faced by the leader is to find $ \gamma^{1} $ that satisfies Eq. (\ref{E3}). From Corollary \ref{T3} and Corollary \ref{T4}, it follows that there exists an affine mapping
	\begin{equation}\label{Ea7}
		\alpha_{1}^{\Pi_{W^{2d}}}\in \mathcal{A}_{1}^{\Pi_{W^{2d}}}
	\end{equation} such that
	\begin{equation*}\label{Eb7}
		\text{Graph}(\alpha_{1}^{\Pi_{W^{2d}}}) \cap W^{2d}= (u^{1d},u^{2d},u^{3d}).
	\end{equation*}
	Now, we need to show that $\alpha_{1}^{\Pi_{W^{2d}}} (\Omega_{1}) = \Omega_{2}\times \Omega_{3}$. Since $ J_{2} $ is Fr\'{e}chet differentiable at $ (u^{1d},u^{2d},u^{3d}) $ the normal vector to $ W^{2d} $ at $ (u^{1d},u^{2d},u^{3d}) $ exists, is unique and  equal to $ \nabla J_{2}(u^{1d},u^{2d},u^{3d}) $, in which case its supporting hyperplane $ \Pi_{ W^{2d}}(u^{1d},u^{2d},u^{3d}) $ is given by		
	\begin{equation}\label{E7}
		\begin{split}
			\langle\nabla_{u^{1}}J_{2}(u^{1d},u^{2d},u^{3d}),u^{1}-u^{1d}\rangle+ \langle\nabla_{u^{2}}J_{2}(u^{1d},u^{2d},u^{3d}),\\ u^{2}-u^{2d}\rangle+\langle\nabla_{u^{3}}J_{2}(u^{1d},u^{2d},u^{3d}),u^{3}-u^{3d}\rangle=0.	
		\end{split}
	\end{equation}
	If $ \nabla_{u^{1}}J_{2}(u^{1d},u^{2d},u^{3d})\neq 0 $, then the normal vector defining the hyperplane \\
    $ \Pi_{W^{2d}}(u^{1d},u^{2d},u^{3d}) $ is not orthogonal to the decision space $ \Omega_{1}$, that means,
	\begin{center}
		$\text{Proj}_{\Omega_{1}}^{{\Pi_{W^{2d}}}(u^{1d},u^{2d},u^{3d})}\neq \{0\} $.
	\end{center}
	Then it follows that the hyperplane is not orthogonal to $\{0\}^{m_{1}}\times \Omega_{2}\times\Omega_{3}$. Thus,
        \[\text{Proj}_{\Omega_{2}\times\Omega_{3}}^{\Pi_{W^{2d}}(u^{1d},u^{2d},u^{3d})} = \Omega_{2}\times\Omega_{3}.\]
	
	Hence, for all $(u^{2},u^{3})\in \Omega_{2}\times\Omega_{3}$ there exists  $ u^{1}\in \Omega_{1}$ such that $ (u^{1},u^{2},u^{3})\in \Pi_{W^{2d}}(u^{1d},u^{2d},u^{3d})$. 	Thus, the affine map $\alpha_{1}^{\Pi_{W^{2d}}}$ whose existence is verified in Eq. (\ref{Ea7}) is full dimensional and fulfills the condition that
	\begin{center}
		$\alpha_{1}^{\Pi_{W^{2d}}} (\Omega_{1}) = \Omega_{2}\times \Omega_{3}$.
	\end{center}
	The  leader's strategy given by $ \gamma^{1}:= (\alpha_{1}^{\Pi_{W^{2d}}})^{-1} $ restricts the optimization of the middle level player to set of points determined by the map $ \gamma^{1} $. Since graph of $ \gamma^{1} $ intersects the level set $ W^{2d} $ only at $ (u^{1d},u^{2d},u^{3d}) $, the minimum of $ J_{2}(\gamma^{1*},u^{2},u^{3}) $ over $ \Omega_{2}\times \Omega_{3} $ is obtained at $ (\gamma^{1*}(u^{2d},u^{3d}),u^{2d},u^{3d}) $.

\end{proof}

\begin{remark}
	If $ \nabla_{u^{1}}J_{2}(u^{1d},u^{2d},u^{3d})= 0$, then  $ J_{2} $ is not sensitive to the change of leader's control variable $ u^{1}$. In this case, the leader can not directly influence the decision of the  middle level player.	 On the other hand, if $ \nabla_{u^{1}}J_{2}(u^{1d},u^{2d},u^{3d})\neq 0$ and $ \nabla_{u^{1}}J_{3}(u^{1d},u^{2d},u^{3d})= 0$	the leader can still achieve the desired equilibrium under the conditions in Theorem \ref{T16}.
\end{remark}

\begin{theorem}\label{T5}
	Suppose that $ W^{2d} $ is  convex and is locally strictly convex at the point $ (u^{1d},u^{2d},u^{3d}) $, Assumption 3 is satisfied, $ J_{2}(u^{1},u^{2},u^{3}) $ is Fr\'{e}chet differentiable at $ (u^{1d},u^{2d},u^{3d}) $ and $ \Omega_{1}= \R^{m_{1}}, \Omega_{2}= \R^{m_{2}}, \Omega_{3}= \R^{m_{3}} $. If the leader's affine map   $\  \gamma^{1}: \Omega_{2}\times\Omega_{3} \rightarrow \Omega_{1}$ realizes the desired equilibrium $ (u^{1d},u^{2d},u^{3d}) $, then
	\[ \nabla_{u^{1}}J_{2}(u^{1d},u^{2d},u^{3d})\neq 0.\]
\end{theorem}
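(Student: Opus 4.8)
The plan is to prove Theorem \ref{T5} by contraposition: assume $\nabla_{u^{1}}J_{2}(u^{1d},u^{2d},u^{3d}) = 0$ and show that no affine leader map $\gamma^{1}$ can realize the desired equilibrium, i.e., that the minimization in Eq. (\ref{E3}) of $J_{2}(\gamma^{1},u^{2},u^{3})$ over $\Omega_{2}\times\Omega_{3}$ cannot be forced to occur at $(u^{1d},u^{2d},u^{3d})$. The geometric picture that drives the argument is the one already set up in Theorem \ref{T41}: since $W^{2d}$ is locally strictly convex at the desired point and $J_{2}$ is Fr\'{e}chet differentiable there, Corollary \ref{T3} and Corollary \ref{T4} tell us that the only candidate for $\text{Graph}(\gamma^{1})$ that meets $W^{2d}$ solely at $(u^{1d},u^{2d},u^{3d})$ is an affine set lying on the supporting hyperplane $\Pi_{W^{2d}}(u^{1d},u^{2d},u^{3d})$, whose normal direction is $\nabla J_{2}(u^{1d},u^{2d},u^{3d})$.

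First I would write out the supporting hyperplane as in Eq. (\ref{E7}) and observe that if $\nabla_{u^{1}}J_{2}(u^{1d},u^{2d},u^{3d}) = 0$, then the equation defining $\Pi_{W^{2d}}(u^{1d},u^{2d},u^{3d})$ contains no $u^{1}$-term at all; equivalently, the hyperplane is of the form $\R^{m_{1}} \times H$ for some affine hyperplane $H \subset \Omega_{2}\times\Omega_{3}$ (equivalently, it is ``orthogonal'' to $\{0\}^{m_{1}}\times\Omega_{2}\times\Omega_{3}$, reversing the projection argument used in Theorem \ref{T41}). Next I would argue that because $\text{Graph}(\gamma^{1})$ is the graph of a function defined on all of $\Omega_{2}\times\Omega_{3}$, it must project onto all of $\Omega_{2}\times\Omega_{3}$; but a subset of $\R^{m_{1}}\times H$ can project onto at most $H$, a proper subset of $\Omega_{2}\times\Omega_{3}$. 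Hence $\text{Graph}(\gamma^{1})$ cannot be contained in $\Pi_{W^{2d}}(u^{1d},u^{2d},u^{3d})$, so by Corollary \ref{T3} any affine $\gamma^{1}$ satisfying Eq. (\ref{E1}) necessarily has $\text{Graph}(\gamma^{1})$ containing a point of $W^{2d}$ other than $(u^{1d},u^{2d},u^{3d})$.

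To close the contradiction I would then need one extra analytic step: having $\text{Graph}(\gamma^{1})$ hit another point of the sublevel set $W^{2d}$ must prevent $(u^{1d},u^{2d},u^{3d})$ from being the (unique) minimizer of $J_{2}$ restricted to $\text{Graph}(\gamma^{1})$. If the graph meets the interior of $W^{2d}$ this is immediate, since then $J_{2}$ is strictly below its desired value somewhere feasible. The delicate case is when the additional intersection happens only on the boundary of $W^{2d}$; here I would use the assumed uniqueness of the team optimum of $J_{1}$ together with local strict convexity and a short argument that an affine set lying through an interior-free point but not on the supporting hyperplane must actually dip into the interior (a standard consequence of strict convexity and the Support Theorem, Lemma \ref{T2}), so the minimum of $J_{2}$ over $\text{Graph}(\gamma^{1})$ is strictly less than $J_{2}(u^{1d},u^{2d},u^{3d})$, contradicting Eq. (\ref{E3}).

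The main obstacle I anticipate is precisely this last step: ruling out the degenerate possibility that $\text{Graph}(\gamma^{1})$ is tangent to $W^{2d}$ along a lower-dimensional face or touches it at isolated boundary points without entering the interior, in which case $(u^{1d},u^{2d},u^{3d})$ could still conceivably be a minimizer (just not a unique one). Handling this cleanly requires leaning on the \emph{local strict convexity} hypothesis at the desired point to guarantee that the supporting hyperplane is the unique affine set of its dimension meeting $W^{2d}$ only there, so that any deviation forces penetration into the interior; making that local-to-global transition precise for affine sets of dimension $m_{2}+m_{3}$ is the part of the argument that needs the most care, and I would isolate it as a short lemma if the paper's development permits.
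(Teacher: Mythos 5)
Your proposal follows essentially the same contrapositive route as the paper: when $\nabla_{u^{1}}J_{2}(u^{1d},u^{2d},u^{3d})=0$ the supporting hyperplane of Eq.~(\ref{E7}) contains no $u^{1}$-term, hence projects onto a proper subset of $\Omega_{2}\times\Omega_{3}$, so no affine function defined on all of $\Omega_{2}\times\Omega_{3}$ can have its graph inside it, and Corollary~\ref{T3} then blocks realization of the desired equilibrium. The final step you flag as delicate is where you are actually more careful than the paper (whose proof simply stops at ``the affine map $\gamma^{1}$ cannot be defined''), and it closes easily: Eq.~(\ref{E3}) demands that the minimizer of $J_{2}$ along $\text{Graph}(\gamma^{1})$ be exactly $(u^{2d},u^{3d})$, so a second intersection point of $\text{Graph}(\gamma^{1})$ with $W^{2d}$ (or the first-order dip into the strict sublevel set along a non-tangential affine direction) already yields the contradiction.
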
	

\begin{proof}
	Since $ J_{2} $ is differentiable at $ (u^{1d},u^{2d},u^{3d}) $, the normal vector to $ W^{2d} $ at $ (u^{1d},u^{2d},u^{3d}) $ exists, is unique and is equal to $ \nabla J_{2}(u^{1d},u^{2d},u^{3d}) $ in which case the supporting hyperplane $ \Pi_{ W^{2d}}(u^{1d},u^{2d},u^{3d}) $ is given by Eq. (\ref{E7}).
	
	The proof is carried out by contrapositive arguments. Suppose that,
	\[ \nabla_{u^{1}}J_{2}(u^{1d},u^{2d},u^{3d})=0.\]
	Then it follows from Eq. (\ref{E7}) that
	\[ \langle\nabla_{u^{2}}J_{2}(u^{1d},u^{2d},u^{3d}),u^{2}-u^{2d}\rangle+\langle\nabla_{u^{3}}J_{2}(u^{1d},u^{2d},u^{3d}),u^{3}-u^{3d}\rangle=0.\]
	Then the normal vector defining the hyperplane $ \Pi_{W^{2d}}(u^{1d},u^{2d},u^{3d})$ is
	\[ \mathbf{v} = (0,\nabla_{u^{2}}J_{2}(u^{1d},u^{2d},u^{3d}),\nabla_{u^{3}}J_{2}(u^{1d},u^{2d},u^{3d})).\]
    Since $W^{2d}$ is locally strictly convex at $(u^{1d},u^{2d},u^{3d})$, the hyperplane \\
    $\Pi_{W^{2d}}(u^{1d},u^{2d},u^{3d})$ is orthogonal to $\{0\}^{m_{1}}\times \Omega_{2}\times\Omega_{3}  $. That means,
	\[\Omega_{2}\times\Omega_{3} \subsetneq \text{Proj}_{\Omega_{2}\times\Omega_{3}}^{\Pi_{W^{2d}}(u^{1d},u^{2d},u^{3d})}.\]
	
	Hence, $ \Pi_{W^{2d}}(u^{1d},u^{2d},u^{3d})$ does  not include any element $ (u^{1},u^{2},u^{3})\in$ $\Omega_{1}\times (\Omega_{2}\times\Omega_{3}\backslash \{(u^{2d},u^{3d})\}) $, which implies that $ \alpha_{1}^{\Pi_{W^{2d}}}(\Omega_{1})\subsetneq \Omega_{2}\times\Omega_{3} $.
	That means, $ \gamma^{1} = (\alpha_{1}^{\Pi_{W^{2d}}})^{-1} $ does not hold. As a result the affine map $ \gamma^{1} $ can not be defined.

\end{proof}

\begin{proposition}\label{T7}
	Suppose that all the conditions of Theorem \ref{T41} are satisfied to guarantee the existence of $\gamma^1$. Then for all $ (u^{1},u^{2},u^{3})\in \text{Graph}(\gamma^{1}) $, we have
	\begin{equation}\label{ET1}
		J_{2}(u^{1},u^{2},u^{3})\geq J_{2}(u^{1d},u^{2d},u^{3d}),
	\end{equation}
	where equality can only be achieved at the  equilibrium point $ (u^{1d},u^{2d},u^{3d}) $, which is desired by the leader.
\end{proposition}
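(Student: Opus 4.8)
The plan is to derive (\ref{ET1}) directly from the construction of $\gamma^{1}$ carried out in the proof of Theorem \ref{T41}, with no new analytic input. The crucial fact supplied by that theorem is that the leader's affine strategy is $\gamma^{1} = (\alpha_{1}^{\Pi_{W^{2d}}})^{-1}$, whose graph lies in the supporting hyperplane $\Pi_{W^{2d}}(u^{1d},u^{2d},u^{3d})$ and satisfies
\[ \text{Graph}(\gamma^{1}) \cap W^{2d} = (u^{1d},u^{2d},u^{3d}). \]
Together with the definition (\ref{E5}) of the sublevel set $W^{2d}$, this identity is all that the argument will use.

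First I would fix an arbitrary $(u^{1},u^{2},u^{3}) \in \text{Graph}(\gamma^{1})$ and argue by contradiction: suppose $J_{2}(u^{1},u^{2},u^{3}) < J_{2}(u^{1d},u^{2d},u^{3d})$. Then by (\ref{E5}) the point $(u^{1},u^{2},u^{3})$ belongs to $W^{2d}$, hence to $\text{Graph}(\gamma^{1}) \cap W^{2d}$, and therefore $(u^{1},u^{2},u^{3}) = (u^{1d},u^{2d},u^{3d})$ by the identity above. But then $J_{2}(u^{1},u^{2},u^{3}) = J_{2}(u^{1d},u^{2d},u^{3d})$, contradicting the assumed strict inequality. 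This proves the inequality (\ref{ET1}) at every point of $\text{Graph}(\gamma^{1})$.

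For the last assertion of the proposition I would re-run the same set-membership step with equality in place of strict inequality: if $(u^{1},u^{2},u^{3}) \in \text{Graph}(\gamma^{1})$ and $J_{2}(u^{1},u^{2},u^{3}) = J_{2}(u^{1d},u^{2d},u^{3d})$, then the defining inequality of $W^{2d}$ holds (with equality), so $(u^{1},u^{2},u^{3}) \in \text{Graph}(\gamma^{1}) \cap W^{2d} = (u^{1d},u^{2d},u^{3d})$; thus equality in (\ref{ET1}) forces $(u^{1},u^{2},u^{3})$ to be the desired equilibrium.

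No step here is a genuine obstacle --- the whole content of the proposition has already been packed into Theorem \ref{T41}. The only points requiring a little care are to cite the precise conclusion of Theorem \ref{T41} (that the graph of $\gamma^{1}$ meets $W^{2d}$ only at the desired point) rather than rebuilding it from Corollaries \ref{T3} and \ref{T4}, and to keep $W^{2d}$ defined by the non-strict inequality ``$\leq$'' so that the same membership argument simultaneously disposes of the strict inequality and the equality case.
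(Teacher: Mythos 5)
Your proof is correct, and it proves exactly what the proposition claims. It is, however, packaged a little differently from the paper's own argument, and the difference is worth noting. The paper re-invokes the supporting hyperplane: since $\text{Graph}(\gamma^{1})\subseteq\Pi_{W^{2d}}$ and $W^{2d}$ lies on one side of $\Pi_{W^{2d}}$, the inequality $J_{2}(u^{1},u^{2},u^{3})\geq J_{2}(u^{1d},u^{2d},u^{3d})$ is asserted for all points of $\Pi_{W^{2d}}$ and then restricted to the graph, with the equality case extracted from the definition (\ref{E5}) of $W^{2d}$. You instead bypass the hyperplane entirely and use only the set identity $\text{Graph}(\gamma^{1})\cap W^{2d}=\{(u^{1d},u^{2d},u^{3d})\}$ supplied by the construction in Theorem \ref{T41} (via Corollaries \ref{T3} and \ref{T4}), together with the non-strict inequality defining $W^{2d}$: any graph point with $J_{2}\leq J_{2}(u^{1d},u^{2d},u^{3d})$ lands in the intersection and is therefore the desired point. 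This is the more elementary route, and it also makes explicit a step the paper leaves implicit --- ``$W^{2d}$ lies on one side of $\Pi_{W^{2d}}$'' by itself only says hyperplane points are outside $W^{2d}$ or on its boundary; converting that into the inequality (\ref{ET1}) and its equality case really does require the unique-intersection fact plus the sublevel-set definition, which is precisely what you use. The trade-off is that the paper's phrasing emphasizes the geometric support picture that drives the whole section, while yours isolates the minimal logical content; both rest on the same construction of $\gamma^{1}$, so neither is more or less circular than the other. One small caution: state clearly (as you do) that $\gamma^{1}$ is the strategy constructed in Theorem \ref{T41}, since the unique-intersection identity is a property of that construction, not of an arbitrary element of $\Gamma^{1}$.
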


\begin{proof}
	Suppose $ (u^{1},u^{2},u^{3})\in \text{Graph}(\gamma^{1})$, then by definition we have  $ (u^{1},u^{2},u^{3})\in \Pi_{W^{2d}}$. Since $ \Pi_{W^{2d}} $ supports $ W^{2d} $ at $ (u^{1d},u^{2d},u^{3d}) $, $ W^{2d} $ lies on one side of $ \Pi_{W^{2d}}$. Hence for all $  (u^{1},u^{2},u^{3})\in \Pi_{W^{2d}} $, we have
	\begin{equation}\label{E9}
		J_{2}(u^{1},u^{2},u^{3}) \geq  J_{2}(u^{1d},u^{2d},u^{3d})
	\end{equation}
	
	Since $\text{Graph}(\gamma^{1})$ lies on $\Pi_{W^{2d}} $, the result follows as well in $ \text{Graph}(\gamma^{1}) $.
	Then, it follows from Eqs. (\ref{E5}) and (\ref{E9}) that equality is achieved at the equilibrium point $ (u^{1d},u^{2d},u^{3d}) $.	
\end{proof}

Therefore, it is the best (rational) interest  of the middle level player  to find a reverse Stackelberg strategy $ \gamma^{2} $ to induce the follower's decision to the desired  value $ u^{3}= u^{3d} $.

Next, we analyze the existence  of a reverse Stackelberg strategy $ \gamma^{2}\in\Gamma^{2} $ which satisfies Assumption \ref{Assum:3}. The following proposition guarantees the existence of a supporting hyperplane to the sublevel set $ W^{3d} $ of $ J_{3}$ at $ (u^{1d},u^{2d},u^{3d})  $.

\begin{proposition}\label{T8}
	If $ W^{3d} $ is  convex and  locally strictly convex at $ (u^{1d},u^{2d},u^{3d}) $, $ J_{3}(u^{1},u^{2},u^{3}) $ is Fr\'{e}chet differentiable at $ (u^{1d},u^{2d},u^{3d}) $ and $ \Omega_{1}= \R^{m_{1}}, \Omega_{2}= \R^{m_{2}}, \Omega_{3}= \R^{m_{3}} $ then there exists a supporting hyperplane $ \Pi_{W^{3d}} $ that intersects $ W^{3d} $  uniquely at the point $ (u^{1d},u^{2d},u^{3d}) $.
\end{proposition}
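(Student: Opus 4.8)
The plan is to reproduce, now for the follower's sublevel set $W^{3d}$, the same geometric reasoning that Corollary \ref{T3} and Corollary \ref{T4} carried out for $W^{2d}$. First I would note that $\Omega_{1}\times\Omega_{2}\times\Omega_{3}=\R^{m_{1}+m_{2}+m_{3}}$ is a real normed linear vector space and that $W^{3d}$, being a full-dimensional convex sublevel set in this space, is a convex set containing interior points; in particular the hypotheses of the Support Theorem (Lemma \ref{T2}) are met. Since $J_{3}$ attains its level value $J_{3}(u^{1d},u^{2d},u^{3d})$ at the desired equilibrium and $W^{3d}$ is locally strictly convex there, the point $(u^{1d},u^{2d},u^{3d})$ is a boundary point of $W^{3d}$, hence not interior. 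Applying Lemma \ref{T2} therefore yields a closed hyperplane $\Pi_{W^{3d}}$ through $(u^{1d},u^{2d},u^{3d})$ such that $W^{3d}$ lies entirely on one side of it.

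Next I would identify this hyperplane explicitly using the Fr\'{e}chet differentiability hypothesis: because $J_{3}$ is Fr\'{e}chet differentiable at $(u^{1d},u^{2d},u^{3d})$, the normal vector to $W^{3d}$ at that boundary point exists, is unique, and equals $\nabla J_{3}(u^{1d},u^{2d},u^{3d})$; consequently the supporting hyperplane $\Pi_{W^{3d}}(u^{1d},u^{2d},u^{3d})$ is the affine hyperplane obtained from Eq. (\ref{E7}) with $J_{2}$ replaced by $J_{3}$. This mirrors exactly the argument in the proof of Theorem \ref{T41}.

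Finally, to obtain uniqueness of the intersection I would invoke the local strict convexity of $W^{3d}$ at $(u^{1d},u^{2d},u^{3d})$, just as in Corollary \ref{T4}: any second point of $W^{3d}$ lying on $\Pi_{W^{3d}}$ would, together with $(u^{1d},u^{2d},u^{3d})$, force a nondegenerate segment of the boundary of $W^{3d}$ to lie in the supporting hyperplane, contradicting strict convexity near the point, while convexity of $W^{3d}$ together with the one-sidedness from Lemma \ref{T2} rules out such a point lying outside that neighborhood as well. Hence $\Pi_{W^{3d}}\cap W^{3d}=(u^{1d},u^{2d},u^{3d})$, which is the assertion. I do not anticipate any genuine obstacle: the proof is a direct transcription of Corollaries \ref{T3}--\ref{T4}, and the only point needing brief care is confirming that $W^{3d}$ has nonempty interior so that Lemma \ref{T2} applies, which is handled exactly as for $W^{2d}$.
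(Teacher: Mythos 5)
Your proposal is correct and follows essentially the same route as the paper, whose proof of this proposition simply invokes Corollaries \ref{T3} and \ref{T4} with $W^{2d}$ replaced by $W^{3d}$; you merely spell out the transcription (Support Theorem for existence, Fr\'{e}chet differentiability for the explicit normal, local strict convexity for uniqueness) in more detail than the paper does.
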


\begin{proof}
	The result follows automatically  from Corollary \ref{T3} and Corollary \ref{T4}.
\end{proof}

In consideration of the case where $ \nabla_{u^{1}} J_{3}(u^{1d},u^{2d},u^{3d})\neq 0 $ we then analyze the influence of $ \gamma^{1} $ on $ J_{3} $.

Since $ J_{3}(u^{1},u^{2},u^{3}) $ is Fr\'{e}chet differentiable at $ (u^{1d},u^{2d},u^{3d}) $ the equation of the hyperplane $ \Pi_{W^{3d}} $ can be written as 	
\begin{equation}\label{E11}
	\begin{split}
		\langle\nabla_{u^{1}}J_{3}(u^{1d},u^{2d},u^{3d}),u^{1}-u^{1d}\rangle+ \langle\nabla_{u^{2}}J_{3}(u^{1d},u^{2d},u^{3d}),u^{2}-u^{2d}\rangle\\ +\langle\nabla_{u^{3}}J_{3}(u^{1d},u^{2d},u^{3d}),u^{3}-u^{3d}\rangle=0.
	\end{split}
\end{equation}

By virtue of Theorem \ref{T41}, if $ \nabla_{u^{1}}J_{3}(u^{1d},u^{2d},u^{3d})\neq 0 $, there exists an affine leader function
\begin{equation}\label{ET3}
	\gamma^{1'}: \Omega_{2}\times\Omega_{3}\rightarrow \Omega_{1}
\end{equation}
that lies on the supporting hyperplane $ \Pi_{W^{3d}} $ to the sublevel set $ W^{3d} $.

Note that both the affine relations $\gamma^{1}$ and $\gamma^{1'}$ intersect the sublevel sets $ W^{2d} $ and $ W^{3d} $  at the point $ (u^{1d},u^{2d},u^{3d}) $ and lie on the supporting hyperplanes  $ \Pi_{W^{2d}} $ and $ \Pi_{W^{3d}} $ respectively.

The two supporting hyperplanes $ \Pi_{W^{2d}} $ and $ \Pi_{W^{3d}} $ passing through $ (u^{1d},u^{2d},u^{3d}) $ have common points.  Therefore, there exists  a set containing points that satisfy both Eqs. (\ref{E7}) and (\ref{E11}). Define this set $ \Phi $ as

\begin{equation}\label{E13}
	\Phi = \{(u^{1},u^{2},u^{3})\in \Omega_{1}\times\Omega_{2}\times\Omega_{3}~\vert~~(u^{1},u^{2},u^{3}) \  \text{satisfies}  \ \ \text{Eqs.} (\ref{E7})\ \ \text{and}\ \ (\ref{E11}) \}.
\end{equation}

   However, the main results of this article follow regardless of the value of  $ \nabla_{u^{1}} J_{3}(u^{1d},u^{2d},u^{3d}) $.
That means, the leader can acquire the desired solution even when  $ \nabla_{u^{1}} J_{3}(u^{1d},u^{2d},u^{3d})= 0 $ by declaring a strategy $ \gamma^{1} $ in conformity with Eq. (\ref{E4}).

\begin{proposition}\label{T9}
	Suppose that all the conditions of Theorem \ref{T41} are satisfied.	Then for $ (u^{1},u^{2},u^{3})\in \Phi $, we have
	\begin{equation}\label{ET2}
		J_{3}(u^{1},u^{2},u^{3}) \geq  J_{3}(u^{1d},u^{2d},u^{3d}),
	\end{equation}
	where equality can only be achieved at the desired equilibrium point $ (u^{1d},u^{2d},u^{3d}) $.
\end{proposition}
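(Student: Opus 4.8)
The plan is to run the argument of Proposition \ref{T7} again, but now with the sublevel set $W^{3d}$ and its supporting hyperplane $\Pi_{W^{3d}}$ in place of $W^{2d}$ and $\Pi_{W^{2d}}$. The key observation is that the set $\Phi$ of Eq. (\ref{E13}) is, by its very definition, contained in $\Pi_{W^{3d}}$: every $(u^{1},u^{2},u^{3})\in\Phi$ satisfies Eq. (\ref{E11}), and Eq. (\ref{E11}) is precisely the equation of $\Pi_{W^{3d}}$ (this hyperplane being well defined since $J_{3}$ is Fr\'echet differentiable at $(u^{1d},u^{2d},u^{3d})$). Thus $\Phi\subseteq\Pi_{W^{3d}}$; in particular the other defining relation, Eq. (\ref{E7}) (i.e.\ membership in $\Pi_{W^{2d}}$), plays no role in this particular estimate. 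I would then invoke Proposition \ref{T8}, whose hypotheses are in force here as part of the standing assumptions ($W^{3d}$ convex and locally strictly convex at $(u^{1d},u^{2d},u^{3d})$, $J_{3}$ Fr\'echet differentiable there, $\Omega_{i}=\R^{m_{i}}$): it gives that $\Pi_{W^{3d}}$ supports $W^{3d}$ at $(u^{1d},u^{2d},u^{3d})$ and that $\Pi_{W^{3d}}\cap W^{3d}=\{(u^{1d},u^{2d},u^{3d})\}$.

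From here the conclusion follows directly. Take any $(u^{1},u^{2},u^{3})\in\Phi$. If $(u^{1},u^{2},u^{3})=(u^{1d},u^{2d},u^{3d})$ then Eq. (\ref{ET2}) holds with equality. Otherwise, since $(u^{1},u^{2},u^{3})\in\Phi\subseteq\Pi_{W^{3d}}$ and the only point of $\Pi_{W^{3d}}\cap W^{3d}$ is $(u^{1d},u^{2d},u^{3d})$, we conclude $(u^{1},u^{2},u^{3})\notin W^{3d}$; by the definition of $W^{3d}$ in Eq. (\ref{E10}) this means $J_{3}(u^{1},u^{2},u^{3})>J_{3}(u^{1d},u^{2d},u^{3d})$. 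Hence the inequality (\ref{ET2}) holds for all $(u^{1},u^{2},u^{3})\in\Phi$, and it is an equality exactly at the desired equilibrium point. Conversely, this characterization of the equality case can also be phrased as in Proposition \ref{T7}: no point of the supporting hyperplane $\Pi_{W^{3d}}$ is an interior point of $W^{3d}$, and local strict convexity upgrades the resulting weak inequality to strictness away from $(u^{1d},u^{2d},u^{3d})$.

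I do not expect a genuine obstacle. The one point needing a little care is simply that the hypotheses on $W^{3d}$ and $J_{3}$ used through Proposition \ref{T8} have to be read as part of the standing assumptions of this subsection, and — as already noted in the discussion preceding the proposition — that no condition on $\nabla_{u^{1}}J_{3}(u^{1d},u^{2d},u^{3d})$ is required here: both $\Pi_{W^{3d}}$ and the set $\Phi$ are well defined regardless of the value of that gradient, so the proposition holds even when $\nabla_{u^{1}}J_{3}(u^{1d},u^{2d},u^{3d})=0$. In essence this is the $J_{3}$-analogue of Proposition \ref{T7}, with the role of $\text{Graph}(\gamma^{1})\subseteq\Pi_{W^{2d}}$ replaced by $\Phi\subseteq\Pi_{W^{3d}}$.
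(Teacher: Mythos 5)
Your proposal is correct and follows essentially the same route as the paper's own proof: both reduce the claim to the observation that $\Phi\subseteq\Pi_{W^{3d}}$, that $\Pi_{W^{3d}}$ supports $W^{3d}$ at $(u^{1d},u^{2d},u^{3d})$, and that the definition of the sublevel set $W^{3d}$ in Eq.~(\ref{E10}) pins down the equality case. Your explicit appeal to Proposition~\ref{T8} for the unique intersection $\Pi_{W^{3d}}\cap W^{3d}=\{(u^{1d},u^{2d},u^{3d})\}$, and your remark that the hypotheses on $W^{3d}$ and $J_{3}$ must be read as standing assumptions (Theorem~\ref{T41}'s stated conditions only concern $W^{2d}$ and $J_{2}$), if anything tighten the paper's slightly looser ``lies on one side'' step.
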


\begin{proof}
	If $ \Phi $ is a singleton, then	$ \Phi = \{(u^{1d},u^{2d},u^{3d})\} $ and the result follows automatically. Suppose $ \Phi $ is not a singleton.
	Let $ (u^{1},u^{2},u^{3})\in \Phi $ such that $ (u^{1},u^{2},u^{3})\neq (u^{1d},u^{2d},u^{3d}) $. Then by definition we have  $ (u^{1},u^{2},u^{3})\in \Pi_{W^{3d}} $. Since $ \Pi_{W^{3d}} $ supports $ W^{3d} $ at $ (u^{1d},u^{2d},u^{3d}) $, $ W^{3d} $ lies on one side of $ \Pi_{W^{3d}} $. Hence for all $  (u^{1},u^{2},u^{3})\in \Pi_{W^{3d}} $, we have
	\begin{equation}\label{E14}
		J_{3}(u^{1},u^{2},u^{3}) \geq  J_{3}(u^{1d},u^{2d},u^{3d}).
	\end{equation}
	Then, it follows from Eqs. (\ref{E10}) and (\ref{E14}) that equality is achieved at the equilibrium point $ (u^{1d},u^{2d},u^{3d}) $.	
\end{proof}	

Therefore, if the middle level player can find a strategy $ \gamma^{2}\in \Gamma^{2} $, corresponding to leader's strategy $ \gamma^{1} $, to constrain the  follower's decision space to $ \Phi $, the  optimal performance available to the  follower can only be achieved at the point $ (u^{1d},u^{2d},u^{3d}) $. That also means that the problem faced by the middle level player is to find $ \gamma^{2} $ that satisfy Eq. (\ref{E4}).

Substituting the fixed leader's optimal reverse Stackelberg strategy  $ \gamma^{1*} $, we denote $ J_{3}(\gamma^{1*},u^{2},u^{3}) $ by $ \bar{J}_{3}(u^{2},u^{3}) $. For such a case, the sublevel set of $ J_{3} $ at $ (u^{1d},u^{2d},u^{3d}) $ is denoted and defined as	
\begin{equation}\label{E19}
	\bar{W}^{3d}= \{(u^{2},u^{3})\in \Omega_{2}\times\Omega_{3}~\vert~~\bar{J}_{3}(u^{2},u^{3})\leq \bar{J}_{3}(u^{2d},u^{3d})\}.
\end{equation}

\begin{remark}
	Since $ \bar{W}^{3d} $ is the outcome of $ W^{3d} $ for fixed  affine leader's strategy $ \gamma^{1*} $, it is  convex and locally strictly convex at $  (u^{2d},u^{3d}) $.
\end{remark}

Substituting the fixed leader's optimal affine reverse Stackelberg strategy \\
$ \gamma^{1*}(u^{1},u^{2},u^{3}) $ in Eq. (\ref{E11}) and using Assumption \ref{Assum:2} we have

\begin{equation}\label{ET6}
	\langle\nabla_{u^{2}}\bar{J}_{3}(u^{2d},u^{3d}), u^{2}-u^{2d}\rangle+\langle\nabla_{u^{3}}\bar{J}_{3}(u^{2d},u^{3d}),u^{3}-u^{3d}\rangle=0
\end{equation}	
which is a hyperplane orthogonal to the decision space $ \Omega_{2}\times \Omega_{3} $. The middle level player can retain his/her desired solution $ (u^{2d},u^{3d})$ by an  affine strategy  $ \gamma^{2}:\Omega_{3}\rightarrow \Omega_{2} $ 	whose graph intersects $ \bar{W}^{3d} $ only at $ (u^{2d},u^{3d})$. This idea will be proved in Theorem \ref{T16} below.

Denote the set of all affine relations of dimension $  m_{3} $ in $ \Omega_{3}$ whose graph passes through $ (u^{2d},u^{3d}) $ by $ \mathcal{A}_{2} $. Then an element  $ \alpha_{2}\in \mathcal{A}_{2}$ satisfies
\[ \text{Graph}(\alpha_{2}) \cap \bar{W}^{3d}= (u^{2d},u^{3d}).\]
Since $ \alpha_{2}\in \mathcal{A}_{2}$ is an affine relation having full dimension  $  m_{3} $, for all  $ u^{3} \in  \Omega_{3} $, there exists $ u^{2}\in \Omega_{2} $ such that $ \alpha_{2}(u^{2})= u^{3} $. Thus, for every  $ \alpha_{2}\in \mathcal{A}_{2}$ we have  $ \alpha_{2}(\Omega_{2}) = \Omega_{3}$. Then the candidate for the leader's function is characterized by $ \gamma^{2}:= (\alpha_{2})^{-1} $. Next, let us consider the set of affine relations whose graph passes through  $ (u^{2d},u^{3d}) $ and  lie on the supporting hyperplane $ \Pi_{\bar{W}^{3d}} $. This set is denoted by $ \mathcal{A}^{\Pi_{\bar{W}^{3d}}}_{2}$ and is defined as

\[\mathcal{A}^{\Pi_{\bar{W}^{3d}}}_{2}:= \{\alpha_{2}\in \mathcal{A}_{2}: \alpha_{2} \subseteq\Pi_{\bar{W}^{3d}}\}.\]

\begin{theorem}\label{T16}
	Suppose that   $\bar{W}^{3d} $ is  convex and locally strictly convex at the point $ (u^{2d},u^{3d}) $, $ \bar{J}_{3}(u^{2},u^{3})$ is Fr\'{e}chet differentiable at $ (u^{2d},u^{3d}) $ and $  \Omega_{2}= \R^{m_{2}}, \Omega_{3}= \R^{m_{3}} $. Under the announced leader's optimal reverse Stackelberg strategy $ \gamma^{1*} $, if $ \nabla_{u^{2}}\bar{J}_{3}(u^{2d},u^{3d})\neq 0 $, there exists  middle level player's optimal  strategy $ \gamma^{2*}: \Omega_{3} \rightarrow \Omega_{2}$ that satisfies Eq. (\ref{E4}).    	
\end{theorem}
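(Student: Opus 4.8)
The plan is to transcribe the proof of Theorem \ref{T41} one level down in the hierarchy, with the middle level player now playing the part the leader played there. Under the substitution $u^{1}=\gamma^{1*}(u^{2},u^{3})$ the correspondence is: $\bar{J}_{3}$ replaces $J_{2}$, the sublevel set $\bar{W}^{3d}$ replaces $W^{2d}$, the point $(u^{2d},u^{3d})$ replaces $(u^{1d},u^{2d},u^{3d})$, the decision space $\Omega_{2}$ replaces $\Omega_{1}$, and $\Omega_{3}$ replaces $\Omega_{2}\times\Omega_{3}$. The hypotheses that $\bar{W}^{3d}$ is convex and locally strictly convex at $(u^{2d},u^{3d})$ --- which the Remark above explains is automatic once $\gamma^{1*}$ is affine --- and that $\Omega_{2},\Omega_{3}$ are Euclidean put us in a position to invoke Corollaries \ref{T3} and \ref{T4} in the normed space $\Omega_{2}\times\Omega_{3}$. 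Observe also that any relation in $\mathcal{A}_{2}^{\Pi_{\bar{W}^{3d}}}$ passes through $(u^{2d},u^{3d})$ by construction, so its inverse --- once it is shown to be single-valued --- automatically obeys $\gamma^{2*}(u^{3d})=u^{2d}$, which is Eq. (\ref{E2}); hence the only substantive task is to verify Eq. (\ref{E4}).

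First I would apply Corollary \ref{T3} and Corollary \ref{T4} to $\bar{W}^{3d}$ at $(u^{2d},u^{3d})$ to obtain an affine relation $\alpha_{2}^{\Pi_{\bar{W}^{3d}}}\in\mathcal{A}_{2}^{\Pi_{\bar{W}^{3d}}}$ lying on the supporting hyperplane $\Pi_{\bar{W}^{3d}}$ and satisfying
\[
\text{Graph}\bigl(\alpha_{2}^{\Pi_{\bar{W}^{3d}}}\bigr)\cap\bar{W}^{3d}=(u^{2d},u^{3d}).
\]
Since $\bar{J}_{3}$ is Fr\'{e}chet differentiable at $(u^{2d},u^{3d})$, the normal to $\bar{W}^{3d}$ there is unique and equal to $\nabla\bar{J}_{3}(u^{2d},u^{3d})$, so $\Pi_{\bar{W}^{3d}}$ is precisely the hyperplane (\ref{ET6}), whose block-decomposed normal is $\bigl(\nabla_{u^{2}}\bar{J}_{3}(u^{2d},u^{3d}),\nabla_{u^{3}}\bar{J}_{3}(u^{2d},u^{3d})\bigr)$. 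I would then feed in the hypothesis $\nabla_{u^{2}}\bar{J}_{3}(u^{2d},u^{3d})\neq 0$: exactly as in the proof of Theorem \ref{T41}, this says $\Pi_{\bar{W}^{3d}}$ is not orthogonal to $\{0\}^{m_{2}}\times\Omega_{3}$, so for each $u^{3}\in\Omega_{3}$ the defining equation (\ref{ET6}) is a nontrivial, solvable affine condition on $u^{2}$, and therefore $\text{Proj}_{\Omega_{3}}^{\Pi_{\bar{W}^{3d}}}=\Omega_{3}$. Thus $\alpha_{2}^{\Pi_{\bar{W}^{3d}}}$ is full dimensional with $\alpha_{2}^{\Pi_{\bar{W}^{3d}}}(\Omega_{2})=\Omega_{3}$, and $\gamma^{2*}:=(\alpha_{2}^{\Pi_{\bar{W}^{3d}}})^{-1}\colon\Omega_{3}\to\Omega_{2}$ is a well-defined affine map belonging to $\Gamma^{2}$, so Eq. (\ref{E2}) holds.

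To conclude I would transpose Proposition \ref{T7}: announcing $\gamma^{1*}$ and then $\gamma^{2*}$ confines the follower's search to $\text{Graph}(\gamma^{2*})\subseteq\Pi_{\bar{W}^{3d}}$; since $\Pi_{\bar{W}^{3d}}$ supports $\bar{W}^{3d}$ at $(u^{2d},u^{3d})$ and $\bar{W}^{3d}$ lies on one side of it, we get $\bar{J}_{3}(u^{2},u^{3})\geq\bar{J}_{3}(u^{2d},u^{3d})$ for every $(u^{2},u^{3})\in\text{Graph}(\gamma^{2*})$, with equality only at $(u^{2d},u^{3d})$ because $\text{Graph}(\gamma^{2*})$ meets $\bar{W}^{3d}$ there and nowhere else. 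Reading this through $\bar{J}_{3}(u^{2},u^{3})=J_{3}(\gamma^{1*},u^{2},u^{3})$ and $u^{2}=\gamma^{2*}(u^{3})$ yields $u^{3d}=\argmin_{u^{3}\in\Omega_{3}}J_{3}(\gamma^{1*},\gamma^{2*},u^{3})$, which is precisely Eq. (\ref{E4}).

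The step I expect to be the main obstacle is the inversion in the second paragraph --- showing that the affine relation chosen on $\Pi_{\bar{W}^{3d}}$ through $(u^{2d},u^{3d})$ really admits a single-valued inverse defined on all of $\Omega_{3}$. This is where $\nabla_{u^{2}}\bar{J}_{3}(u^{2d},u^{3d})\neq 0$ is indispensable: were it zero, the normal of $\Pi_{\bar{W}^{3d}}$ would lie in $\{0\}^{m_{2}}\times\Omega_{3}$, $\Pi_{\bar{W}^{3d}}$ would contain the entire $\Omega_{2}$-direction, $\text{Proj}_{\Omega_{3}}^{\Pi_{\bar{W}^{3d}}}$ would be a proper subset of $\Omega_{3}$, and $\gamma^{2*}$ could not be defined --- the one-level-down counterpart of Theorem \ref{T5}. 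Everything else is a faithful copy of the leader-level argument; the only extra bookkeeping is the routine check that $\bar{J}_{3}$ inherits Fr\'{e}chet differentiability from $J_{3}$ and the affine $\gamma^{1*}$, and that this substitution collapses (\ref{E11}) to the hyperplane (\ref{ET6}) sitting in $\Omega_{2}\times\Omega_{3}$.
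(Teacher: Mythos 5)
Your proposal follows essentially the same route as the paper's own proof: invoke Corollaries \ref{T3} and \ref{T4} for $\bar{W}^{3d}$ at $(u^{2d},u^{3d})$, identify the supporting hyperplane with Eq. (\ref{ET6}) via Fr\'{e}chet differentiability, use $\nabla_{u^{2}}\bar{J}_{3}(u^{2d},u^{3d})\neq 0$ to get $\text{Proj}_{\Omega_{3}}^{\Pi_{\bar{W}^{3d}}}=\Omega_{3}$, and define $\gamma^{2*}:=(\alpha_{2}^{\Pi_{\bar{W}^{3d}}})^{-1}$ so that the unique intersection with $\bar{W}^{3d}$ forces the follower's minimum to $u^{3d}$. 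Your added support-theorem argument for the final minimization step (the analogue of Proposition \ref{T7}) only makes explicit what the paper states in one line, so the two proofs coincide in substance.
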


\begin{proof}
	Since the mapping $ \gamma^{2} $ is assumed to satisfy Eq. (\ref{E2}) by Assumption \ref{Assum:2}, the problem faced by the middle level player is to find $ \gamma^{2} $ that satisfies Eq. (\ref{E4}). From Corollary \ref{T3} and Corollary \ref{T4}, it follows that there exists an affine mapping
	
	\begin{equation}\label{Ea3}
		\alpha_{2}^{\Pi_{\bar{W}^{3d}}}\in \mathcal{A}_{2}^{\Pi_{\bar{W}^{3d}}}
	\end{equation} such that
	\begin{equation*}\label{Eb3}
		\text{Graph}(\alpha_{2}^{\bar{W}^{3d}}) \cap \bar{W}^{3d}= (u^{2d},u^{3d}).
	\end{equation*}
	
	Now, we need to show that $\alpha_{2}^{\Pi_{\bar{W}^{3d}}} (\Omega_{2}) =  \Omega_{3}$. Since $ \bar{J}_{3} $ is Fr\'{e}chet differentiable at $ (u^{2d},u^{3d}) $ the normal vector to $ \bar{W}^{3d} $ at $ (u^{2d},u^{3d}) $ exists, is unique and  equal to $ \nabla \bar{J}_{2}(u^{2d},u^{3d}) $, in which case its supporting hyperplane $ \Pi_{ \bar{W}^{3d}}(u^{2d},u^{3d}) $ is determined by the relation in	Eq. (\ref{ET6}).	
	
	If $ \nabla_{u^{2}}\bar{J}_{2}(u^{2d},u^{3d})\neq 0 $, then the normal vector defining the hyperplane $ \Pi_{\bar{W}^{3d}}(u^{2d},u^{3d}) $ is not orthogonal to the decision space $ \Omega_{2}$, that means,
	\[\text{Proj}_{\Omega_{2}}^{{\Pi_{\bar{W}^{3d}}}(u^{2d},u^{3d})}\neq \{0\}.\]
	Then it follows that the hyperplane is not orthogonal to $\{0\}^{m_{2}}\times\Omega_{3}$. Thus,
    \[\text{Proj}_{\Omega_{3}}^{\Pi_{\bar{W}^{3d}}(u^{2d},u^{3d})} = \Omega_{3}.\]
	
	Hence, for all 	$u^{3}\in \Omega_{3}$ there exists  $ u^{2}\in \Omega_{2}$ such that $ (u^{2},u^{3})\in \Pi_{\bar{W}^{3d}}(u^{2d},u^{3d})$.
	Thus, the affine map $\alpha_{2}^{\Pi_{\bar{W}^{3d}}}$ whose existence is verified in Eq. (\ref{Ea3}) is full dimensional and fulfills the relation:
	\[\alpha_{2}^{\Pi_{\bar{W}^{3d}}} (\Omega_{2}) =  \Omega_{3}.\]
	
	The  middle level player's strategy given by $ \gamma^{2}:= (\alpha_{2}^{\Pi_{\bar{W}^{3d}}})^{-1} $ restricts the optimization of the follower to set of points determined by the map $ \gamma^{1} $ and $ \gamma^{2} $. Since the graph of $ \gamma^{2} $ intersects the level set $ \bar{W}^{3d} $ only at $ (u^{2d},u^{3d}) $, the minimum of $ \bar{J}_{3}(\gamma^{1*},\gamma^{2},u^{3}) $ is obtained at $ u^{3d} $.

\end{proof}

The  supporting hyperplanes $ \Pi_{W^{2d}} $ and $ \Pi_{\bar{W}^{3d}}  $  through the point $ (u^{1d},u^{2d},u^{3d}) $ have common points.
Therefore, there exists  a set containing points that satisfy both Eqs. (\ref{E7}) and (\ref{ET6}). Define this set $ \Psi $ as	
\begin{equation}\label{E26}
	\Psi = \{(u^{1d},u^{2},u^{3})\in \Omega_{1}\times\Omega_{2}\times\Omega_{3}~\vert~~(u^{1d},u^{2},u^{3}) \  \text{satisfies} \ \text{Eqs.} (\ref{E7}) \ \text{and} \ (\ref{ET6}) \ \}.
\end{equation}

\begin{proposition}\label{T15}
	Suppose that the conditions of Theorems \ref{T41} and \ref{T5} are satisfied so that the  existence of $ \gamma^{1} $ and $ \gamma^{2} $ are guarantied.	Then for $ (u^{1d},u^{2},u^{3})\in \Psi$, we have
	\begin{equation}\label{ET5}
		J_{3}(u^{1d},u^{2},u^{3}) \geq  J_{3}(u^{1d},u^{2d},u^{3d}),
	\end{equation}
	where equality can only be achieved at the desired equilibrium point $ (u^{1d},u^{2d},u^{3d}) $.
\end{proposition}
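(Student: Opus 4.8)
The plan is to transcribe the supporting-hyperplane argument of Proposition~\ref{T9} into the ``barred'' setting in which the leader's optimal strategy $\gamma^{1*}$ has already been substituted, i.e.\ to replace $W^{3d}$, $\Phi$, $\Pi_{W^{3d}}$ and Eq.~(\ref{E11}) by $\bar{W}^{3d}$, $\Psi$, $\Pi_{\bar{W}^{3d}}$ and Eq.~(\ref{ET6}) respectively. First I would dispose of the degenerate case: if $\Psi=\{(u^{1d},u^{2d},u^{3d})\}$ the inequality~(\ref{ET5}) holds trivially with equality, so I may assume $\Psi$ is not a singleton and fix an arbitrary $(u^{1d},u^{2},u^{3})\in\Psi$ with $(u^{2},u^{3})\neq(u^{2d},u^{3d})$.

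Next I would establish the supporting-hyperplane property for $\bar{W}^{3d}$. By the remark following Eq.~(\ref{E19}), $\bar{W}^{3d}$ is convex and locally strictly convex at $(u^{2d},u^{3d})$, and $\bar{J}_{3}=J_{3}(\gamma^{1*},\cdot,\cdot)$ is Fr\'{e}chet differentiable there because $J_{3}$ is and $\gamma^{1*}$ is affine; hence Corollary~\ref{T3} and Corollary~\ref{T4} apply to $\bar{W}^{3d}$ exactly as in the proof of Theorem~\ref{T16}, so the hyperplane $\Pi_{\bar{W}^{3d}}$ given by Eq.~(\ref{ET6}) supports $\bar{W}^{3d}$ at $(u^{2d},u^{3d})$ and meets it only at that point. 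By the definition of $\Psi$ in Eq.~(\ref{E26}), the chosen $(u^{2},u^{3})$ satisfies Eq.~(\ref{ET6}), hence lies on $\Pi_{\bar{W}^{3d}}$; being different from the unique contact point it cannot belong to $\bar{W}^{3d}$, and by the definition of that sublevel set in Eq.~(\ref{E19}) this forces $\bar{J}_{3}(u^{2},u^{3})>\bar{J}_{3}(u^{2d},u^{3d})$, while equality clearly holds at $(u^{2},u^{3})=(u^{2d},u^{3d})$.

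Finally I would pass back from $\bar{J}_{3}$ to $J_{3}$: since $\gamma^{1*}$ realizes the desired equilibrium we have $\gamma^{1*}(u^{2d},u^{3d})=u^{1d}$, and on the portion of $\text{Graph}(\gamma^{1*})$ relevant to $\Psi$ the leader's coordinate equals $u^{1d}$, so $\bar{J}_{3}(u^{2},u^{3})=J_{3}(u^{1d},u^{2},u^{3})$ and $\bar{J}_{3}(u^{2d},u^{3d})=J_{3}(u^{1d},u^{2d},u^{3d})$; combining this with the previous step yields~(\ref{ET5}) with equality attained only at $(u^{1d},u^{2d},u^{3d})$. The genuinely delicate point -- the only one that is not a line-by-line copy of the proof of Proposition~\ref{T9} -- is this last identification, namely keeping the substitution $u^{1}=\gamma^{1*}(u^{2},u^{3})$ consistent with the first coordinate being pinned at $u^{1d}$ along $\Psi$; the supporting-hyperplane estimate itself is routine once the convexity and differentiability of the barred data are in hand.
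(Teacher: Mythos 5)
Your supporting-hyperplane work for $\bar{W}^{3d}$ is fine, but the step you yourself flag as delicate is where the argument breaks: the identification $\bar{J}_{3}(u^{2},u^{3})=J_{3}(u^{1d},u^{2},u^{3})$ on $\Psi$ is not justified and is in general false. By definition $\bar{J}_{3}(u^{2},u^{3})=J_{3}(\gamma^{1*}(u^{2},u^{3}),u^{2},u^{3})$, so this identity would require $\gamma^{1*}(u^{2},u^{3})=u^{1d}$ for every $(u^{1d},u^{2},u^{3})\in\Psi$. Nothing gives that: the first coordinate of points of $\Psi$ equals $u^{1d}$ by fiat in the definition (Eq.~(\ref{E26})), which says nothing about the value of $\gamma^{1*}$ at $(u^{2},u^{3})$; the points of $\Psi$ need not lie on $\text{Graph}(\gamma^{1*})$; and the realization condition (Eq.~(\ref{E1})) only fixes $\gamma^{1*}(u^{2d},u^{3d})=u^{1d}$. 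The most you can extract (combining Eq.~(\ref{E7}) at $u^{1}=u^{1d}$ with $\text{Graph}(\gamma^{1*})\subset\Pi_{W^{2d}}$) is $\langle\nabla_{u^{1}}J_{2}(u^{1d},u^{2d},u^{3d}),\gamma^{1*}(u^{2},u^{3})-u^{1d}\rangle=0$, which does not force $\gamma^{1*}(u^{2},u^{3})=u^{1d}$. Consequently what your argument actually establishes is $J_{3}(\gamma^{1*}(u^{2},u^{3}),u^{2},u^{3})\geq J_{3}(u^{1d},u^{2d},u^{3d})$, i.e.\ an inequality along the graph of the leader's strategy, not the stated inequality (\ref{ET5}) at the pinned value $u^{1}=u^{1d}$.

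The paper's (one-line) proof transcribes Proposition~\ref{T9} without the detour through the barred data: a point $(u^{1d},u^{2},u^{3})\in\Psi$ is regarded as lying on the supporting hyperplane $\Pi_{W^{3d}}$ of the unbarred sublevel set $W^{3d}$ (the identification of the defining relation (\ref{ET6}) with (\ref{E11}) under the fixed strategy is exactly what the paper later records in Lemma~\ref{T12}); since $\Pi_{W^{3d}}$ meets $W^{3d}$ only at $(u^{1d},u^{2d},u^{3d})$, any other point of $\Psi$ is outside $W^{3d}$ and the conclusion follows directly from the definition (\ref{E10}), with the first coordinate genuinely equal to $u^{1d}$ throughout. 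To repair your version you would either have to argue along these lines (work with $W^{3d}$ and $\Pi_{W^{3d}}$, so that $J_{3}$ is evaluated at $u^{1}=u^{1d}$ from the start) or add the missing hypothesis/argument showing $\langle\nabla_{u^{1}}J_{3}(u^{1d},u^{2d},u^{3d}),\gamma^{1*}(u^{2},u^{3})-u^{1d}\rangle=0$ on $\Psi$, which is precisely the content your current wording assumes away.
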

\begin{proof}
	Follows a similar argument as in the proof of Proposition \ref{T9}.

\end{proof}

\begin{lemma}\label{T12}
	Suppose that the conditions of Theorems \ref{T41} and \ref{T5} are satisfied. Then for optimal reverse Stackelberg strategies  $ \gamma^{1*} $ and  $ \gamma^{2*} $, the sets $  \Phi $ and  $ \Psi$ are equal.  	
\end{lemma}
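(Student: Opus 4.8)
The plan is to prove the two inclusions $\Phi\subseteq\Psi$ and $\Psi\subseteq\Phi$ by comparing the linear equations that cut out the two sets. Both $\Phi$ and $\Psi$ are carved out by the common equation (\ref{E7}), which is the defining equation of the supporting hyperplane $\Pi_{W^{2d}}$; the only difference is that $\Phi$ additionally imposes (\ref{E11}) (the equation of $\Pi_{W^{3d}}$) while $\Psi$ imposes (\ref{ET6}) (the equation of $\Pi_{\bar W^{3d}}$). Since, under the hypotheses carried over from Theorems \ref{T41} and \ref{T5}, the leader has already committed to the affine strategy $\gamma^{1*}$, the whole content of the lemma is that, relative to $\gamma^{1*}$, equations (\ref{E11}) and (\ref{ET6}) express the same constraint.

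The key step is to pass from (\ref{E11}) to (\ref{ET6}) through $\gamma^{1*}$. Write $\gamma^{1*}$ in affine form as $\gamma^{1*}(u^{2},u^{3}) = u^{1d} + A(u^{2}-u^{2d}) + B(u^{3}-u^{3d})$, with $(A,B)$ the linear part, so that along $\text{Graph}(\gamma^{1*})$ one has the identity $u^{1}-u^{1d} = A(u^{2}-u^{2d}) + B(u^{3}-u^{3d})$. Because $\bar J_{3}(u^{2},u^{3}) = J_{3}(\gamma^{1*}(u^{2},u^{3}),u^{2},u^{3})$ and $J_{3}$ is Fr\'{e}chet differentiable at the desired point (which underlies the very definitions (\ref{E11}) and (\ref{ET6}) of the two hyperplanes), the chain rule gives
\[
\nabla_{u^{2}}\bar J_{3}(u^{2d},u^{3d}) = \nabla_{u^{2}}J_{3} + A^{\top}\nabla_{u^{1}}J_{3},
\qquad
\nabla_{u^{3}}\bar J_{3}(u^{2d},u^{3d}) = \nabla_{u^{3}}J_{3} + B^{\top}\nabla_{u^{1}}J_{3},
\]
with all gradients of $J_{3}$ evaluated at $(u^{1d},u^{2d},u^{3d})$. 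Substituting the affine identity for $u^{1}-u^{1d}$ into the left-hand side of (\ref{E11}) and regrouping the $u^{2}$- and $u^{3}$-terms turns (\ref{E11}) into precisely (\ref{ET6}), and the substitution is reversible; hence along $\text{Graph}(\gamma^{1*})$ equations (\ref{E11}) and (\ref{ET6}) are equivalent. Since Theorem \ref{T41} constructs $\gamma^{1*}$ with $\text{Graph}(\gamma^{1*})\subseteq\Pi_{W^{2d}}$, equation (\ref{E7}) holds identically along $\text{Graph}(\gamma^{1*})$, so a point of $\text{Graph}(\gamma^{1*})$ satisfies (\ref{E7}) and (\ref{E11}) if and only if it satisfies (\ref{E7}) and (\ref{ET6}).

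It then remains to identify both $\Phi$ (from (\ref{E13})) and $\Psi$ (from (\ref{E26})) with this common slice of $\text{Graph}(\gamma^{1*})$, which is where I expect the main obstacle to lie: the first coordinate is free in the definition of $\Phi$ but pinned to $u^{1d}$ in that of $\Psi$, so one must argue that the part of $\Phi$ actually realizable under $\gamma^{1*}$ --- and hence the part relevant to Propositions \ref{T9} and \ref{T15} --- is exactly $\text{Graph}(\gamma^{1*})\cap\Pi_{W^{3d}}$, and that on this locus writing the first coordinate as $\gamma^{1*}(u^{2},u^{3})$ is consistent with the form $(u^{1d},u^{2},u^{3})$ used for $\Psi$. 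The degenerate case in which $\Phi$ is a singleton is immediate, since then $\Phi=\{(u^{1d},u^{2d},u^{3d})\}\subseteq\Psi$ and conversely; otherwise one runs the double inclusion using the equivalence of (\ref{E11}) and (\ref{ET6}) above together with the already-noted fact that $\Pi_{W^{2d}}$, $\Pi_{W^{3d}}$ and $\Pi_{\bar W^{3d}}$ all pass through $(u^{1d},u^{2d},u^{3d})$, so all the intersections involved are nonempty.
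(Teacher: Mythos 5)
Your central computation is exactly the mechanism that the paper's (very terse) proof relies on: Eq.~(\ref{ET6}) is Eq.~(\ref{E11}) with $u^{1}=\gamma^{1*}(u^{2},u^{3})$ substituted, and your chain-rule identities $\nabla_{u^{2}}\bar J_{3}=\nabla_{u^{2}}J_{3}+A^{\top}\nabla_{u^{1}}J_{3}$, $\nabla_{u^{3}}\bar J_{3}=\nabla_{u^{3}}J_{3}+B^{\top}\nabla_{u^{1}}J_{3}$ make explicit what the paper merely asserts when it says that (\ref{ET6}) ``is equal to (\ref{E11}) for fixed $\gamma^{1*}$'' and that (\ref{E7}) ``follows automatically'' (because $\text{Graph}(\gamma^{1*})\subseteq\Pi_{W^{2d}}$). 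So on substance you are on the same route as the paper, and in fact more careful than it is.

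However, as submitted your argument is not a proof: the last paragraph flags the mismatch between the free first coordinate in (\ref{E13}) and the coordinate pinned to $u^{1d}$ in (\ref{E26}) as ``where I expect the main obstacle to lie'' and leaves the identification of $\Phi$ and $\Psi$ with the slice of $\text{Graph}(\gamma^{1*})$ as something ``one must argue,'' so the double inclusion is never actually carried out. The resolution is that the lemma is only meant -- and only true -- under the restriction you hesitate over: read $\Phi$ along the graph of $\gamma^{1*}$, i.e.\ with $u^{1}=\gamma^{1*}(u^{2},u^{3})$, and read a point $(u^{1d},u^{2},u^{3})\in\Psi$ as the pair $(u^{2},u^{3})$ tagged with the desired value $u^{1d}$; this is what ``for optimal reverse Stackelberg strategies $\gamma^{1*}$ and $\gamma^{2*}$'' is intended to mean, and the paper says so immediately after the lemma (``\dots holds for the set of points determined by the affine reverse Stackelberg strategies $\gamma^{1*}$ and $\gamma^{2*}$''). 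Note that taken literally the sets are not equal, and the paper's own proof has exactly the blemish you detected: for $(u^{1d},u^{2},u^{3})\in\Psi$ with $(u^{2},u^{3})\neq(u^{2d},u^{3d})$, Eq.~(\ref{ET6}) asserts (\ref{E11}) at the point $(\gamma^{1*}(u^{2},u^{3}),u^{2},u^{3})$, not at $(u^{1d},u^{2},u^{3})$, the two differing by the term $\langle\nabla_{u^{1}}J_{3},\gamma^{1*}(u^{2},u^{3})-u^{1d}\rangle$, while the appeal to (\ref{E1}) in the forward inclusion likewise presupposes that the point lies on $\text{Graph}(\gamma^{1*})$. Once you adopt this graph-restricted reading, your equivalence of (\ref{E11}) and (\ref{ET6}) along $\text{Graph}(\gamma^{1*})$, together with the fact that (\ref{E7}) holds identically there, gives both inclusions immediately; you should state that reading explicitly and finish the two or three lines, rather than leave it as an anticipated obstacle.
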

\begin{proof}
	Suppose that $ (u^{1},u^{2},u^{3})\in \Phi $, then Eq. (\ref{E7}) follows automatically and Eq. (\ref{ET6}) follows  by  Eq. (\ref{E1}).
	Therefore, $ \Phi \subseteq\Psi $.
	
	Conversely if $ (u^{1d},u^{2},u^{3})\in \Psi $, then we have Eq. (\ref{ET6}) which is equal to Eq. (\ref{E11}) for fixed $ \gamma^{1*} $.  And Eq. (\ref{E7})  follows automatically.
	Hence,  $ \Psi \subseteq\Phi $.

\end{proof}

The optimal reverse Stackelberg strategies $ \gamma^{1*} $ and $ \gamma^{2*} $   lie on the supporting hyperplanes  $ \Pi_{W^{2d}} $ and $ \Pi_{\bar{W}^{3d}}  $ respectively.  Therefore, results of Proposition \ref{T9}, Proposition \ref{T15} and Lemma \ref{T12} holds for the set of points determined by the affine reverse Stackelberg  strategies $ \gamma^{1*} $ and $ \gamma^{2*} $.

We conclude the discussion  about the existence of  optimal reverse Stackelberg strategies $ \gamma^{1*} $ and  $ \gamma^{2*} $  with the following theorem.
\begin{theorem}\label{T16d}
	Suppose that $ W^{2d} $ and  $\bar{W}^{3d} $ are  convex and locally strictly convex at the point $(u^{1d},u^{2d},u^{3d})$ and $(u^{2d},u^{3d})$, respectively, $J_{2}(u^{1},u^{2},u^{3})$ and $\bar{J}_{3}(u^{2},u^{3})$ are Fr\'{e}chet differentiable at $(u^{1d},u^{2d},u^{3d})$ and $(u^{2d},u^{3d})$ respectively and $ \Omega_{1} = \R^{m_{1}}, \Omega_{2} = \R^{m_{2}}, \Omega_{3}= \R^{m_{3}}$. If  $\nabla_{u^{1}}J_{2}(u^{1d},u^{2d},u^{3d})\neq 0$, $\nabla_{u^{2}}\bar{J}_{3}(u^{2d},u^{3d})\neq 0$, then there exist optimal reverse Stackelberg strategies $\gamma^{1*} $ and $ \gamma^{2*}$, that guarantee the achievement of the equilibrium solution $(u^{1d},u^{2d},u^{3d})$.    	
\end{theorem}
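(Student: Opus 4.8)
The plan is to obtain Theorem \ref{T16d} by gluing together the one-step results already established, taking care that the leader's strategy fed into the follower's subproblem is precisely the one produced upstream, so that the three optimality (argmin) conditions (\ref{Ea1})--(\ref{E2}) hold simultaneously rather than merely individually. Nothing new needs to be computed; the work lies in sequencing the invocations and in discharging Assumption \ref{Assum:3}.

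First I would apply Theorem \ref{T41} to the data $(W^{2d},J_{2},\Omega_{1},\Omega_{2},\Omega_{3})$: the hypotheses on convexity and local strict convexity of $W^{2d}$, Fr\'{e}chet differentiability of $J_{2}$, the Euclidean structure of the decision spaces, and $\nabla_{u^{1}}J_{2}(u^{1d},u^{2d},u^{3d})\neq 0$ are all present here, while Assumption \ref{Assum:3} will be justified a posteriori from Theorem \ref{T16} (see the final paragraph). This yields an affine map $\gamma^{1*}\in\Gamma^{1}$ whose graph lies on the supporting hyperplane $\Pi_{W^{2d}}$ and meets $W^{2d}$ only at $(u^{1d},u^{2d},u^{3d})$; hence (\ref{E3}) holds, and (\ref{E1}) holds because $\gamma^{1*}\in\Gamma^{1}$. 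Proposition \ref{T7} then certifies $J_{2}\geq J_{2}(u^{1d},u^{2d},u^{3d})$ along $\text{Graph}(\gamma^{1*})$ with equality only at the desired point, i.e.\ the middle level player's team-optimal reply under $\gamma^{1*}$ is $(u^{2d},u^{3d})$.

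Next, fixing this $\gamma^{1*}$, I would pass to the reduced data $\bar{J}_{3}(u^{2},u^{3})=J_{3}(\gamma^{1*},u^{2},u^{3})$ and $\bar{W}^{3d}$ as in (\ref{E19}), which by hypothesis (consistent with the remark following (\ref{E19})) is convex and locally strictly convex at $(u^{2d},u^{3d})$, with $\bar{J}_{3}$ Fr\'{e}chet differentiable there and reduced supporting hyperplane (\ref{ET6}). Since $\nabla_{u^{2}}\bar{J}_{3}(u^{2d},u^{3d})\neq 0$, Theorem \ref{T16} applies and furnishes an affine $\gamma^{2*}\in\Gamma^{2}$ satisfying (\ref{E4}); membership $\gamma^{2*}\in\Gamma^{2}$ gives (\ref{E2}). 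In particular the middle level player can reply optimally in the required direction for the announced $\gamma^{1*}$, so Assumption \ref{Assum:3} is met, which retroactively legitimizes the use of Theorem \ref{T41}. For (\ref{Ea1}) I would invoke the standing assumption of Section \ref{sec:prem} that $(u^{1d},u^{2d},u^{3d})$ is the unique team optimum of $J_{1}$: the triple $(\gamma^{1*},\gamma^{2*},u^{3d})$ attains $J_{1}(u^{1d},u^{2d},u^{3d})$ and hence minimizes $J_{1}$ over $\Gamma_{1}\times\Gamma_{2}\times\Omega_{3}$. Finally, Lemma \ref{T12} identifies $\Phi$ with $\Psi$ for $(\gamma^{1*},\gamma^{2*})$, and Propositions \ref{T9} and \ref{T15} give $J_{3}\geq J_{3}(u^{1d},u^{2d},u^{3d})$ on this common set with equality only at the desired point, so the follower's best decision under $(\gamma^{1*},\gamma^{2*})$ is $u^{3d}$. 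Collecting (\ref{Ea1})--(\ref{E2}), the pair $(\gamma^{1*},\gamma^{2*})$ realizes $(u^{1d},u^{2d},u^{3d})$.

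The main obstacle is logical rather than analytic: the apparent circularity between Theorem \ref{T41} (which lists Assumption \ref{Assum:3} among its hypotheses) and Theorem \ref{T16} (which supplies that assumption, but only once a $\gamma^{1*}$ has been chosen). The clean way to break it is to observe that the \emph{construction} of $\gamma^{1*}$ in Theorem \ref{T41} — intersecting $\mathcal{A}_{1}$ with the supporting hyperplane $\Pi_{W^{2d}}$ — uses only $W^{2d}$ and the condition $\nabla_{u^{1}}J_{2}\neq 0$, while Assumption \ref{Assum:3} enters solely to upgrade ``$\gamma^{1*}$ solves the middle player's relaxed problem (\ref{E3})'' to ``$\gamma^{1*}$ solves the full trilevel problem.'' I would therefore first build $\gamma^{1*}$, then build $\gamma^{2*}$ relative to it via Theorem \ref{T16}, and only then read off that all of (\ref{Ea1})--(\ref{E2}) hold, making this order explicit so that no step depends on a conclusion proved later.
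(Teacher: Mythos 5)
Your proposal is correct and takes essentially the same route as the paper, whose entire proof of this theorem is the single line that the result ``follows from Theorem \ref{T41} and Theorem \ref{T16}.'' Your extra care in sequencing the two invocations and in discharging Assumption \ref{Assum:3} via Theorem \ref{T16} (so as to break the apparent circularity) is more explicit than anything the paper provides, but it is a refinement of the same argument, not a different one.
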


\begin{proof}
	The proof follows from  Theorem \ref{T41} and Theorem \ref{T16}.

\end{proof}

\subsection{Nonconvex sublevel sets }

In this Section, we consider nonconvex sublevel sets $ W^{2d} $ and $ W^{3d} $. By taking the convex hulls of  $ W^{2d} $ and $ W^{3d} $, we develop  conditions under which results for the convex case can be applied.

When the sublevel sets  $ W^{2d} $ and $ W^{3d} $ are allowed to be nonconvex, the desired solution $ (u^{1d},u^{2d},u^{3d}) $ of the leader may not be at the boundary point of the sublevel sets. So, the requirement  	Graph$( \alpha_{1}) \cap W^{2d}= (u^{1d},u^{2d},u^{3d}) $ fails to hold. Consequently, the results in Theorem \ref{T41} and Theorem \ref{T16} can not be applied directly. In the following propositions, we present conditions under which results of Theorem \ref{T41} and Theorem \ref{T16} can be applied  to nonconvex sublevel sets  $ W^{2d} $ and $ W^{3d} $. In what follows, we denote the convex hull of a set $ A $ by $ conv(A) $.

\begin{proposition}\label{T17}
	Let  $ conv(W^{2d}) $ be locally strictly convex at $ (u^{1d},u^{2d},u^{3d}) $ and assume that  $ J_{2}(u^{1},u^{2},u^{3}) $ is Fr\'{e}chet differentiable at $ (u^{1d},u^{2d},u^{3d}) $ and $ \Omega_{1}= \R^{m_{1}}, \Omega_{2}= \R^{m_{2}}, \Omega_{3}= \R^{m_{3}}$. If $ (u^{1d},u^{2d},u^{3d}) $ is an exposed point of $ conv(W^{2d}) $ and $ \nabla_{u^{1}}J_{2}(u^{1d},u^{2d},u^{3d})\neq 0 $, then  there exists leader's affine map $ \gamma^{1}: \Omega_{2}\times\Omega_{3} \rightarrow \Omega_{1}$ that realizes the desired equilibrium $ (u^{1d},u^{2d},u^{3d}) $.
\end{proposition}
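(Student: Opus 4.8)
The plan is to reduce the statement to the convex case already handled in Theorem~\ref{T41} by replacing $W^{2d}$ with its convex hull. Throughout write $x^{d}:=(u^{1d},u^{2d},u^{3d})$. First I would extract, from the hypothesis that $x^{d}$ is an \emph{exposed} point of $conv(W^{2d})$, a hyperplane $\Pi$ that supports $conv(W^{2d})$ and meets it only at $x^{d}$; this is exactly the definition of an exposed point. Because $W^{2d}\subseteq conv(W^{2d})$ and $x^{d}\in W^{2d}\cap\Pi$, it follows at once that $\Pi\cap W^{2d}=\{x^{d}\}$, so any affine relation $\alpha_{1}$ whose graph lies in $\Pi$ and passes through $x^{d}$ has $\text{Graph}(\alpha_{1})\cap W^{2d}=\{x^{d}\}$. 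This is precisely the property whose failure (noted just before the statement) blocked a direct application of Theorem~\ref{T41}; the exposedness assumption restores it. I would also note that $x^{d}$ cannot lie in the interior of $W^{2d}$ — otherwise it would lie in the interior of $conv(W^{2d})$, contradicting exposedness — so $x^{d}\in\partial W^{2d}$ and $J_{2}(x^{d})$ equals the level value defining $W^{2d}$.

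The second step, and the real content, is to show that $\Pi$ is the \emph{same} hyperplane as the gradient hyperplane of $J_{2}$ at $x^{d}$, namely the one in Eq.~(\ref{E7}). Local strict convexity of $conv(W^{2d})$ at $x^{d}$ makes its supporting hyperplane at $x^{d}$ unique — Corollary~\ref{T4} applied to the convex set $conv(W^{2d})$ — so it is enough to identify the outward normal $n$ of $\Pi$ with a positive multiple of $\nabla J_{2}(x^{d})$. Since $W^{2d}\subseteq conv(W^{2d})$ lies entirely on one side of $\Pi$, any direction $d$ with $\langle n,d\rangle>0$ leaves $W^{2d}$ immediately, so $J_{2}(x^{d}+td)>J_{2}(x^{d})$ for small $t>0$ and hence $\langle\nabla J_{2}(x^{d}),d\rangle\ge 0$; letting $\langle n,d\rangle\downarrow 0$ this persists for every $d$ with $\langle n,d\rangle\ge 0$, which forces $\nabla J_{2}(x^{d})=\lambda n$ with $\lambda\ge 0$. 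Fréchet differentiability guarantees $\nabla J_{2}(x^{d})$ is the unique normal, and $\lambda>0$ because $\nabla_{u^{1}}J_{2}(x^{d})\neq 0$ makes $\nabla J_{2}(x^{d})\neq 0$. Hence $\Pi=\Pi_{W^{2d}}(x^{d})$ is given by Eq.~(\ref{E7}).

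From here the proof of Theorem~\ref{T41} carries over. The $u^{1}$-block of the normal, $\nabla_{u^{1}}J_{2}(x^{d})$, being nonzero, $\Pi$ is not orthogonal to $\Omega_{1}$, so $\text{Proj}_{\Omega_{2}\times\Omega_{3}}^{\Pi}=\Omega_{2}\times\Omega_{3}$ and there is a full-dimensional affine relation $\alpha_{1}^{\Pi_{W^{2d}}}\in\mathcal{A}_{1}^{\Pi_{W^{2d}}}$ with $\alpha_{1}^{\Pi_{W^{2d}}}(\Omega_{1})=\Omega_{2}\times\Omega_{3}$. Then $\gamma^{1}:=(\alpha_{1}^{\Pi_{W^{2d}}})^{-1}$ is affine from $\Omega_{2}\times\Omega_{3}$ to $\Omega_{1}$, belongs to $\Gamma^{1}$ (it satisfies Eq.~(\ref{E1}) by construction), and has $\text{Graph}(\gamma^{1})\subseteq\Pi$. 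Since $\Pi\cap W^{2d}=\{x^{d}\}$, every point of $\text{Graph}(\gamma^{1})$ other than $x^{d}$ gives a strictly larger value of $J_{2}$, so $J_{2}(\gamma^{1}(u^{2},u^{3}),u^{2},u^{3})$ is minimized over $\Omega_{2}\times\Omega_{3}$ only at $(u^{2d},u^{3d})$; by Assumption~\ref{Assum:3} the middle-level player then responds so that Eq.~(\ref{E3}) holds, and therefore $\gamma^{1}$ realizes the desired equilibrium $x^{d}$.

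The hard part is the normal-identification in the middle paragraph: nonconvexity of $W^{2d}$ means the exposing hyperplane is produced abstractly, and one must be sure that passing to the convex hull has not rotated the outward normal at $x^{d}$ away from $\nabla J_{2}(x^{d})$ — the combination of local strict convexity (uniqueness of the support), the one-sided inclusion $W^{2d}\subseteq conv(W^{2d})$, and differentiability of $J_{2}$ (to pin the normal to the gradient) is exactly what makes this work. Once that identification is in hand, Proposition~\ref{T17} is a corollary of Theorem~\ref{T41}, and the analogous reduction — replacing $W^{3d}$ and $\bar{W}^{3d}$ by their convex hulls and invoking Theorem~\ref{T16} — would give the companion statement for the middle-level player.
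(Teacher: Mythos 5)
Your proof is correct and follows essentially the same route as the paper: use the exposed-point hypothesis to get a hyperplane supporting $conv(W^{2d})$ and meeting it (hence $W^{2d}$) only at $(u^{1d},u^{2d},u^{3d})$, then rerun the full-dimensionality and inversion argument of Theorem~\ref{T41} with $W^{2d}$ replaced by its convex hull. Your middle paragraph identifying the exposing hyperplane's normal with $\nabla J_{2}(u^{1d},u^{2d},u^{3d})$ simply makes explicit a step the paper leaves implicit (via Fr\'{e}chet differentiability and the reference back to Theorem~\ref{T41}), and it is carried out correctly.
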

\begin{proof}
	As $ (u^{1d},u^{2d},u^{3d}) $ is an exposed point of $ conv(W^{2d}) $, the existence of a  hyperplane   $ \Pi_{conv (W^{2d})}$ that supports $ conv(W^{2d}) $ at $ (u^{1d},u^{2d},u^{3d}) $ follows by definition. Furthermore, we have
    \[ \Pi_{conv (W^{2d})}(u^{1d},u^{2d},u^{3d})\cap conv(W^{2d})=(u^{1d},u^{2d},u^{3d}).\]
Noting that the desired equilibrium point $(u^{1d},u^{2d},u^{3d})$ is in $W^{2d}$ and $W^{2d}\subset conv(W^{2d}) $, the hyperplane $ \Pi_{conv (W^{2d})}(u^{1d},u^{2d},u^{3d})$ supports $ W^{2d} $ at $ (u^{1d},u^{2d},u^{3d}) $ as well.
	Define an affine map $ \alpha_{1}^{\Pi_{conv (W^{2d})}}: \Omega_{1}\rightarrow\Omega_{2}\times \Omega_{3} $  that lies on the supporting hyperplane $  \Pi_{conv (W^{2d})}(u^{1d},u^{2d},u^{3d})$. Now, we need to check whether this map satisfies  $\alpha_{1}^{\Pi_{conv(W^{2d})}} (\Omega_{1}) = \Omega_{2}\times \Omega_{3}$. But this follows from the proof of Theorem 1  replacing  $ W^{2d} $ by $ conv(W^{2d})$.

\end{proof}

\begin{proposition}\label{T18}
	Suppose that   $conv(\bar{W}^{3d}) $ is locally strictly convex at $ (u^{1d},u^{2d},u^{3d}) $, $ \bar{J}_{3}(u^{2},u^{3})  $ is differentiable at $ (u^{2d},u^{3d}) $ and $  \Omega_{2}= \R^{m_{2}}, \Omega_{3}= \R^{m_{3}} $. Under the announced leader's optimal reverse Stackelberg strategy $ \gamma^{1*} $, if  $ (u^{1d},u^{2d},u^{3d}) $ is an exposed point of $ conv(W^{2d}) $ and $ \nabla_{u^{2}}\bar{J}_{3}(u^{2d},u^{3d})\neq 0 $, then, there exists  the middle level player's optimal  strategy $ \gamma^{2*}: \Omega_{3} \rightarrow \Omega_{2}$ that satisfies Eq. (\ref{E4}).    	
\end{proposition}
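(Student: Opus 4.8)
The plan is to transcribe the proof of Proposition \ref{T17} one level down the hierarchy, i.e. with $\bar{J}_{3}$, $\bar{W}^{3d}$, $\mathcal{A}_{2}$ and Theorem \ref{T16} playing the roles of $J_{2}$, $W^{2d}$, $\mathcal{A}_{1}$ and Theorem \ref{T41}. (Here I read the exposed-point hypothesis as ``$(u^{2d},u^{3d})$ is an exposed point of $conv(\bar{W}^{3d})$'', consistent with the statement of Proposition \ref{T17} and with the remark that $\bar{W}^{3d}$ is the outcome of $W^{3d}$ under the fixed $\gamma^{1*}$.) First I would use the definition of an exposed point: there is a hyperplane $\Pi_{conv(\bar{W}^{3d})}$ through $(u^{2d},u^{3d})$ with $\Pi_{conv(\bar{W}^{3d})}(u^{2d},u^{3d})\cap conv(\bar{W}^{3d}) = (u^{2d},u^{3d})$. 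Since $(u^{2d},u^{3d})\in\bar{W}^{3d}\subseteq conv(\bar{W}^{3d})$, the same hyperplane supports $\bar{W}^{3d}$ and intersects it only at $(u^{2d},u^{3d})$; this replaces the appeal to Corollaries \ref{T3} and \ref{T4} that was available in the convex case.

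Next I would verify that this supporting hyperplane is full dimensional over $\Omega_{3}$, i.e. that it can be written as $\text{Graph}(\alpha_{2})$ for an affine $\alpha_{2}\colon\Omega_{2}\to\Omega_{3}$ with $\alpha_{2}(\Omega_{2})=\Omega_{3}$. Because $\bar{J}_{3}$ is Fr\'{e}chet differentiable at $(u^{2d},u^{3d})$, the normal of $\bar{W}^{3d}$ there is unique and equals $\nabla\bar{J}_{3}(u^{2d},u^{3d})$, so $\Pi_{conv(\bar{W}^{3d})}(u^{2d},u^{3d})$ is exactly the hyperplane of Eq. (\ref{ET6}); the hypothesis $\nabla_{u^{2}}\bar{J}_{3}(u^{2d},u^{3d})\neq 0$ then forces it to be non-orthogonal to $\{0\}^{m_{2}}\times\Omega_{3}$, whence $\text{Proj}_{\Omega_{3}}^{\Pi_{conv(\bar{W}^{3d})}(u^{2d},u^{3d})}=\Omega_{3}$. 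This is verbatim the projection computation in the proof of Theorem \ref{T16}, now applied with $\bar{W}^{3d}$ replaced by $conv(\bar{W}^{3d})$, so I would simply cite it.

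Finally I would set $\gamma^{2*} := (\alpha_{2}^{\Pi_{conv(\bar{W}^{3d})}})^{-1}\colon\Omega_{3}\to\Omega_{2}$; it is a well-defined affine map by the previous paragraph, and $\gamma^{2*}(u^{3d}) = u^{2d}$ holds by construction, so $\gamma^{2*}\in\Gamma^{2}$. Its graph lies in $\Pi_{conv(\bar{W}^{3d})}$ and hence meets $\bar{W}^{3d}$ only at $(u^{2d},u^{3d})$, so the restriction of $\bar{J}_{3}(\gamma^{1*},\gamma^{2*},u^{3})$ to $u^{3}\in\Omega_{3}$ attains its minimum at $u^{3d}$; this is precisely Eq. (\ref{E4}), which is what had to be produced.

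The one place where care is genuinely needed — and the only step that is not a mechanical copy of Theorem \ref{T16} — is the first one: one must be sure that passing to the convex hull does not spoil the ``unique intersection'' property, i.e. that a hyperplane touching $conv(\bar{W}^{3d})$ only at $(u^{2d},u^{3d})$ touches the possibly nonconvex set $\bar{W}^{3d}$ only there as well. This is immediate from $\bar{W}^{3d}\subseteq conv(\bar{W}^{3d})$ together with $(u^{2d},u^{3d})\in\bar{W}^{3d}$, which is exactly why the exposed-point assumption (rather than mere local strict convexity of the hull) is the right hypothesis. Everything downstream of that observation is identical to the convex-case argument already given.
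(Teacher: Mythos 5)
Your proposal is correct and follows essentially the same route as the paper's own proof: it uses the exposed-point definition to obtain a hyperplane meeting $conv(\bar{W}^{3d})$ only at the desired point, observes that by $\bar{W}^{3d}\subseteq conv(\bar{W}^{3d})$ the same hyperplane supports $\bar{W}^{3d}$ uniquely there, and then reuses the projection/full-dimensionality argument of Theorem \ref{T16} to invert the affine map into $\gamma^{2*}$. Your reading of the exposed-point hypothesis as referring to $conv(\bar{W}^{3d})$ at $(u^{2d},u^{3d})$ matches the paper's intent (its statement contains a copy-over of the $conv(W^{2d})$ hypothesis from Proposition \ref{T17}), and your explicit justification of the unique-intersection step is a slightly more careful rendering of the same argument, not a different one.
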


\begin{proof}
	Since	$ (u^{1d},u^{2d},u^{3d}) $ is assumed to be an exposed point of $conv(\bar{W}^{3d})$, the existence of a supporting hyperplane $\Pi_{\bar{W}^{3d} }(u^{1d},u^{2d},u^{3d})$ that supports $ conv(\bar{W}^{3d}) $ at $ (u^{1d},u^{2d},u^{3d}) $ follows from its definition. Furthermore, we have
 \[\Pi_{conv (\bar{W}^{3d})}(u^{1d},u^{2d},u^{3d})\cap conv(\bar{W}^{3d})=(u^{1d},u^{2d},u^{3d}).\]
 Noting that the desired equilibrium $ (u^{1d},u^{2d},u^{3d})$ is in $\bar{W}^{3d} $ and $ \bar{W}^{3d}\subset conv(\bar{W}^{3d}) $, the hyperplane $ \Pi_{conv (\bar{W}^{3d})}(u^{1d},u^{2d},u^{3d})$ supports $\bar{W}^{3d} $ at $ (u^{1d},u^{2d},u^{3d}) $ as well.
 	Define an affine map $ \alpha_{2}^{\Pi_{conv (\bar{W}^{3d})}}: \Omega_{2}\rightarrow \Omega_{3} $  that lie on the supporting hyperplane $  \Pi_{conv (\bar{W}^{3d})}(u^{1d},u^{2d},u^{3d})$. Now,we  check whether this map satisfies  $\alpha_{2}^{\Pi_{conv(\bar{W}^{3d})}} (\Omega_{2}) =  \Omega_{3}$.

	Again the assertion that   $\alpha_{2}^{\Pi_{conv(\bar{W}^{3d})}} (\Omega_{2}) = \Omega_{3}$ follows from Theorem \ref{T16}  replacing  $ \bar{W}^{3d} $ by $ conv(\bar{W}^{3d})$.

\end{proof}

\begin{remark} The idea described above can be extended to any finite $n$-level reverse Stackelberg problem that has similar structure as in problems \ref{Ea1} -- \ref{E2}. In deed, once the leaders optimal affine maping $\gamma^{1*}$ is constructed, since affine strategy of the leader ($ 1^{\textrm{st}} $-level player) preserves convexity of sublevel sets of the followers, the structure of the objective functions of the remaining $2^{\textrm{nd}},\ldots, (n-1)^{\textrm{th}}, n^{\textrm{th}}$ level players remains unaffected. Next, the $ 2^{\textrm{nd}}$-level player  in the hierarchy  constructs reverse Stackelberg strategy to preserve his/her team optimal solution induced by the announced strategy of the leader. This process continues up to the $ (n-1)^{\textrm{th}} $-level player in the vertical hierarchy which constructs a reverse Stackelberg strategy to obtain the team optimal solution induced by the strategy of the $ (n-2)^{\textrm{th}} $-level player in the hierarchy. This task can be achieved by repeating a procedure used for finding optimal reverse Stackelberg strategy of the middle level player in trilevel  games $ (n-2) $ number of times sequentially
\end{remark}

\section{Characterization of Optimal Reverse Stackelberg Strategies}\label{sect:characSt}
	
	In the first part of this Section, we provide illustrative examples to support the foregoing  existence results. Reverse Stackelberg strategies for the leader are constructed based on Fr\'{e}chet derivatives  of $ J_{2} $ and $ J_{3} $ at the desired solution point $ (u^{1d},u^{2d},u^{3d}) $. The solution method which is also discussed in \cite{mizukami1989constructions} is not all inclusive. It  results in a single leader's affine reverse Stackelberg strategy, where in reality a number of strategies can be constructed. In the second part of the Section, we develop a more general  method to construct multiple reverse Stackelberg strategies for the leader.
	
	\subsection{A single solution case}\label{subsect:char1}
	
	In this Subsection an affine strategy $ \gamma^{1}: \Omega_{2}\times\Omega_{3}\rightarrow\Omega_{1} $ that yields the desired equilibrium point $(u^{1d},u^{2d},u^{3d})$ is characterized. Following the presentation in  \cite{mizukami1989constructions} we assume that the decision spaces $ \Omega_{1},\Omega_{2},\Omega_{3}$ are unconstrained Hilbert spaces, $ J_{2},J_{3} $  are Fr\'{e}chet differentiable and   convex.  The determination of leader's function $ \gamma^{1} $ in the finite dimensional case (with $ \Omega_{1}=\R^{m_1},\Omega_{2}=\R^{m_2}$ and $\Omega_{3}=\R^{m_3}$), which is given by
	\begin{equation}\label{Ex1}
	u^{1}:= \gamma^{1}(u^{2},u^{3})= u^{1d}- Q_{1}(u^{2}-u^{2d})- Q_{2}(u^{3}-u^{3d}),
	\end{equation}
satisfies Eq. (\ref{E1}) automatically, and is	reduces to a computation of $ m_{1}\times m_{2} $ matrix $ Q_{1} $ and $ m_{1}\times m_{3} $ matrix $ Q_{2} $. In the finite dimensional Hilbert space the linear operators  $ Q_{1} $ and $ Q_{2} $ satisfy
	\begin{equation}\label{Ex2}
		Q_{1}^{*}\nabla_{u_{1}}J_{2}(u^{1d},u^{2d},u^{3d}) =  \nabla_{u_{2}}J_{2}(u^{1d},u^{2d},u^{3d}),
\end{equation}

\begin{equation}\label{Ex3}	
	 Q_{2}^{*}\nabla_{u_{1}}J_{2}(u^{1d},u^{2d},u^{3d}) =  \nabla_{u_{3}}J_{2}(u^{1d},u^{2d},u^{3d}),
\end{equation}
	 where $ Q_{i}^{*}, i=1,2 $ denote the adjoint of $ Q_{i}, i=1,2 $. For finite dimensional spaces we use the fact that $ Q_{i}^{*}= Q_{i}^{T}$.

	In this case the leader's affine reverse Stackelberg strategy is
	
	\begin{equation}\label{EEx1}
	\begin{split}
	\gamma^{1}(u^{2},u^{3})= u^{1d}- \left( \begin{matrix}\frac{\nabla_{u_{1}}J_{2}(u^{1d},u^{2d},u^{3d})}{\langle \nabla_{u_{1}}J_{2}(u^{1d},u^{2d},u^{3d}),\nabla_{u_{1}}J_{2}(u^{1d},u^{2d},u^{3d})\rangle}\end{matrix}\right)\qquad \qquad\\ \times\left( \begin{matrix}
	\langle\nabla_{u_{2}}J_{2}(u^{1d},u^{2d},u^{3d}),u^{2}-u^{2d}\rangle
	\end{matrix}\right)\\ -  \left( \begin{matrix}\frac{\nabla_{u_{1}}J_{2}(u^{1d},u^{2d},u^{3d})}{\langle \nabla_{u_{1}}J_{2}(u^{1d},u^{2d},u^{3d}),\nabla_{u_{1}}J_{2}(u^{1d},u^{2d},u^{3d})\rangle}\end{matrix}\right)\qquad \qquad\\ \times\left( \begin{matrix}
		\langle\nabla_{u_{3}}J_{2}(u^{1d},u^{2d},u^{3d}),u^{3}-u^{3d}\rangle
		\end{matrix}\right).
	\end{split}	
	\end{equation}
	Substituting Eq. (\ref{Ex1}) into the supporting hyperplane in   Eq. (\ref{E11}) and simplifying we get
	\begin{equation}\label{Ex4}
	\begin{split}
	\langle\nabla_{u^{2}}J_{3}(u^{1d},u^{2d},u^{3d}) - Q_{1}^{*}(\nabla_{u^{1}}J_{3}(u^{1d},u^{2d},u^{3d})), u^{2}- u^{2d}\rangle \ \ \ \ \ \ \ \ \ \ \   \\
	+ \langle\nabla_{u^{3}}J_{3}(u^{1d},u^{2d},u^{3d}) - Q_{2}^{*}(\nabla_{u^{1}}J_{3}(u^{1d},u^{2d},u^{3d})), u^{3}- u^{3d}\rangle = 0.
	\end{split}
	\end{equation}
	
	The resulting Eq. (\ref{Ex4}) represents a hyperplane that is orthogonal to the product space  \ $ \Omega_{2}\times\Omega_{3} $. As mentioned in \cite{mizukami1989constructions}, if
     \[\nabla_{u^{2}}J_{3}(u^{1d},u^{2d},u^{3d}) - Q_{1}^{*}(\nabla_{u^{1}}J_{3}(u^{1d},u^{2d},u^{3d})) \neq 0,\]
then there exists a bounded linear operator $ Q_{3}^{*}$ that satisfies
	\begin{equation}\label{Ex5}
	\begin{split}
	Q_{3}^{*} [\nabla_{u^{2}}J_{3}(u^{1d},u^{2d},u^{3d}) - Q_{1}^{*}(\nabla_{u^{1}}J_{3}(u^{1d},u^{2d},u^{3d}))] \\= [\nabla_{u^{3}}J_{3}(u^{1d},u^{2d},u^{3d}) - Q_{2}^{*}(\nabla_{u^{1}}J_{3}(u^{1d},u^{2d},u^{3d}))],
	\end{split}	
	\end{equation}
	such that	an affine function
	
	\begin{equation}\label{Ex6}
	u^{2}=  \gamma^{2}(u^{3})   = u^{2d}- Q_{3}(u^{3}- u^{3d}),
	\end{equation}
	 lies on the hyperplane determined by Eq. (\ref{Ex4}).

	Denote expressions in Eq. (\ref{Ex5}) as
	 \begin{equation*}
	 \begin{split}
	 	\bar{u}_{321}=\nabla_{u^{2}}J_{3}(u^{1d},u^{2d},u^{3d}) - Q_{1}^{*}(\nabla_{u^{1}}J_{3}(u^{1d},u^{2d},u^{3d})),  \\
	 \bar{u}_{331}= \nabla_{u^{3}}J_{3}(u^{1d},u^{2d},u^{3d}) - Q_{2}^{*}(\nabla_{u^{1}}J_{3}(u^{1d},u^{2d},u^{3d})).
	 \end{split}
	\end{equation*}
 Then Eq. (\ref{Ex6}) can be re-written as
 \begin{equation}\label{EEx2}
 	\gamma^{2}(u^{3})   = u^{2d}- \frac{\bar{u}_{321}}{\langle \bar{u}_{321},\bar{u}_{321}\rangle}\langle\bar{u}_{331},u^{3}-u^{3d}\rangle.
 \end{equation}

That means, given a trilevel game with differentiable cost functions of followers, the top level player can construct optimal reverse Stackelberg strategy $\gamma^{1*}$ using the expression given in Eq. (\ref{EEx1}). The strategy declared by the leader induces the middle level player to construct $\gamma^{2*}$ using the expressions given in  Eq. (\ref{EEx2}). These strategies are set up in such a way that the leader can achieve his/her desired equilibrium.
 \begin{example}
 	Consider the trilevel problem with objective functions of each hierarchical level is respectively given by
 	\begin{equation*}
 	\begin{split}
 	&J_{1}(u^{1},u^{2},u^{3}) = (u^{1}-2)^{2}+(u^{2}-1)^{2}+(u^{3}-3)^{2},\\
 	&J_{2}(u^{1},u^{2},u^{3}) = (u^{1}-1)^{2}+(u^{2})^{2}+(u^{3})^{2},\\
 	&J_{3}(u^{1},u^{2},u^{3}) = (u^{1})^{2}+(u^{2}-2)^{2}+(u^{3})^{2}.
 	\end{split}
 	\end{equation*}
 \end{example}
	
	We obtain  team optimal solution  $(u^{1d},u^{2d},u^{3d})= (2,1,3) $ by minimizing $ J_{1} $ over $\R^3$. So, the leader's optimal reverse Stackelberg strategy  $ \gamma^{1*} $ that modifies follower's objective functions so as to  reach his/her desired equilibrium must satisfy
	\begin{equation*}
	\begin{split}	
	& \argmin_{u^{2},u^{3}} J_{2}(\gamma^{1*}(u^{2},u^{3}),u^{2},u^{3})=(1,3),\\	
	&\gamma^{1*}(1,3)= 2.
	\end{split}
	\end{equation*}
	Since $ J_{2} $ is differentiable, $\nabla J_{2}(u^{1},u^{2},u^{3})= (2(u^{1}-1),2u^{2},2u^{3})^{\top}$
	and  at the desired equilibrium point   $(2,1,3), \nabla J_{2}(2,1,3)= (2,2,6)^{\top}  $. From Eqs. (\ref{Ex2}) and (\ref{Ex3}) at the team optimal solution $ Q_{1}= 1, Q_{2}= 3 $. In this case, the optimal reverse Stackelberg strategy of the leader is
	\[ \gamma^{1*}= -u^{2} - 3u^{3} +12.\]

	When the leader announces his/her optimal reverse Stackelberg strategy $ \gamma^{1*} $ we verify that Eqs. (\ref{E1}) and (\ref{E2}) are satisfied.	
	\begin{equation*}
		\begin{split}	
			& \argmin_{u^{2},u^{3}} [(-u^{2} - 3u^{3} +12)^{2}+(u^{2})^{2}+(u^{3})^{2}]=(1,3),\\	
			&\gamma^{1*}(1,3)= 2.
		\end{split}
	\end{equation*}
	In reaction to announcement of  $ \gamma^{1*} $ by the leader, the middle level player constructs $ \gamma^{2*}$ that satisfies Eqs. (\ref{E3}) and (\ref{E4}) by solving :
	\begin{equation*}
	\begin{split}	
		& \argmin_{u^{3}} J_{3}(\gamma^{1*}(u^{2},u^{3}),\gamma^{2*}(u^{3}),u^{3})= 3,\\	
		&\gamma^{2*}(3)=1.
	\end{split}
\end{equation*}
Since $ \nabla J_{3}(u^{1},u^{2},u^{3})= (2u^{1},2(u^{2}-2),2u^{3})^\top  $, the Fr\'{e}chet derivative at the desired equilibrium is $ \nabla J_{3}(2,1,3)= (4,-2,6)^\top  $. It follows from Eq. (\ref{Ex5}) that $ Q_{3}= 1 $ and from Eq. (\ref{EEx1}) that optimal  strategy of  the middle level player is  \begin{center}
		$ \gamma^{2*}= -u^{3}+4 $.
	\end{center}
	
\begin{example}
	Consider the general quadratic trilevel dynamic game with the objective functions for each $i = 1,2,3$ is given by
	\begin{equation}\label{EX21}
	J_{i}(u^{1},u^{2},u^{3})= \sum_{j,k=1,2,3}\langle u^{j},A^{i}_{jk}u^{k} \rangle + \sum_{k=1,2,3}\langle u^{k},l^{i}_{k}\rangle,
	\end{equation}
 where $ k\geq j $ and  $ A^{j}_{ik} $ are linear bounded operators, $ A^{j}_{ii} $  are strongly positive, $ A^{i}_{ki}=0 $ for $ i\neq k $, $ l^{i}_{k}\in\Omega_{k} $ are known and the conditions on $ A^i_{jk} $ makes the Hessian matrix non negative definite for all players.
\end{example}
For $ i=1,2,3 $,  the cost function of each of the players becomes
\begin{equation*}
\begin{split}
J_{i}= \langle u^{1},A^{i}_{11}u^{1} \rangle + \langle  u^{1},A^{i}_{12}u^{2} \rangle + \langle  u^{2},A^{i}_{22}u^{2}\rangle + \langle  u^{1},A^{i}_{13}u^{3} \rangle\\
+ \langle  u^{2},A^{i}_{23}u^{3} \rangle + \langle u^{3},A^{i}_{33}u^{3} \rangle + \langle u^{1},l_{1}^{i} \rangle + \langle u^{2},l_{2}^{i} \rangle + \langle u^{3},l_{3}^{i} \rangle.
\end{split}
\end{equation*}
 The Fr\'{e}chet derivatives of $ J_{i}$ with respect to $ u^{i},i=1,2,3$ are
	\begin{align}
    	\nabla_{u^{1}}J_{i}(u^{1},u^{2},u^{3})= 2A_{11}^{i}u^{1}+A_{12}^{i}u^{2}+A_{13}^{i}u^{3}+l_{1}^{i},\\
    	\nabla_{u^{2}}J_{i}(u^{1},u^{2},u^{3})= A_{12}^{i}u^{1}+2A_{22}^{i}u^{2}+A_{23}^{i}u^{3}+l_{2}^{i},\\
    	\nabla_{u^{3}}J_{i}(u^{1},u^{2},u^{3})= A_{13}^{i}u^{1}+A_{23}^{i}u^{2}+2A_{33}^{i}u^{3}+l_{3}^{i}.
	\end{align}
	The desired equilibrium for the leader  $( u^{1d},u^{2d},u^{3d}) $ is found as a solution of the system
		\begin{equation}\label{g8}
		\left(
			\begin{matrix}
			2A_{11}^{i} \quad A_{12}^{i} \quad   A_{13}^{i} \\
			A_{12}^{i} \quad 2A_{22}^{i} \quad  A_{23}^{i}\\
			A_{13}^{i} \quad  A_{23}^{i} \quad 2A_{33}^{i}
			\end{matrix}
		\right) \left(
		\begin{matrix}
	 u^{1}\\
	 u^{2}\\
	 u^{3}
		\end{matrix}
		\right)=\left(
		\begin{matrix}
		-l^{i}_{1}\\
		-l^{i}_{2}\\
	    -l^{i}_{3}		
		\end{matrix}
		\right),
		\end{equation}
	for each $i = 1,2,3$.

	Since $ J_{1} $ is convex, minimization of $ J_{1} $ with respect to $(u^{1},u^{2},u^{3})$ provides the desired team optimal solution, say $(u^{1d},u^{2d},u^{3d}) $. The  sets $ W^{2d} $ and $ W^{3d} $, given by
		\begin{equation*}
	W^{2d} = \{(u^{1},u^{2},u^{3})\in \Omega_{1}\times \Omega_{2}\times \Omega_{3}: J_{2}(u^{1},u^{2},u^{3})\leq J_{2}(u^{1d},u^{2d},u^{3d})\}
	\end{equation*}	
	\begin{equation*}
	W^{3d} = \{(u^{1},u^{2},u^{3})\in \Omega_{1}\times \Omega_{2}\times \Omega_{3}: J_{3}(u^{1},u^{2},u^{3})\leq J_{3}(u^{1d},u^{2d},u^{3d})\}
	\end{equation*}
	are convex (as each is a sublevel set of a convex function).

	Supporting hyperplanes of $ W^{2d} $ and $ W^{3d} $ at  $( u^{1d},u^{2d},u^{3d}) $ are given by
		\begin{equation*}
	\begin{split}
	\langle\nabla_{u^{1}}J_{i}(u^{1d},u^{2d},u^{3d}),u^{1}-u^{1d}\rangle+ \langle\nabla_{u^{2}}J_{i}(u^{1d},u^{2d},u^{3d}),\\ u^{2}-u^{2d}\rangle+\langle\nabla_{u^{3}}J_{i}(u^{1d},u^{2d},u^{3d}),u^{3}-u^{3d}\rangle=0,	
	\end{split}
	\end{equation*}
	where $ i=2,3 $. To construct  optimal reverse Stackelberg strategy of the leader,  gradients of $ J_{2} $ and $ J_{3} $ at the team optimal solution  $ (u^{1d},u^{2d},u^{3d}) $ are
	\begin{equation*}
	\begin{split}
		\nabla_{u^{1}}J_{i}(u^{1d},u^{2d},u^{3d})= 2A_{1d}^{i}u^{1d}+A_{12}^{i}u^{2d}+A_{13}^{i}u^{3d}+l_{1}^{i},\\
		\nabla_{u^{2}}J_{i}(u^{1d},u^{2d},u^{3d})= A_{12}^{i}u^{1d}+2A_{22}^{i}u^{2d}+A_{23}^{i}u^{3d}+l_{2}^{i},\\
		\nabla_{u^{3}}J_{i}(u^{1d},u^{2d},u^{3d})= A_{13}^{i}u^{1d}+A_{23}^{i}u^{2d}+2A_{33}^{i}u^{3d}+l_{3}^{i}.
	\end{split}
	\end{equation*}
	
	Then according to Eqs. (\ref{Ex1})--(\ref{EEx1}), the  optimal reverse Stackelberg strategy of the leader is

\begin{equation}\label{g81}
	 \gamma^{1*}(u^{2},u^{3}) = u^{1d} - Q_{1}(u^{2}-u^{2d})-Q_{3}(u^{3}-u^{3d}),
\end{equation}
where \[ Q_{1}=\frac{\langle2A_{11}^{1}u^{1d}+A_{12}^{1}u^{2d}+A_{13}^{1}u^{3d}+l_{1}^{1},A_{12}^{1}u^{1d}+2A_{22}^{1}u^{2d}+A_{23}^{1}u^{3d}+l_{1}^{1} \rangle}{\langle 2A_{11}^{1}u^{1d}+A_{12}^{1}u^{2d}+A_{13}^{1}u^{3d}+l_{1}^{1},2A_{11}^{1}u^{1d}+A_{12}^{1}u^{2d}+A_{13}^{1}u^{3d}+l_{1}^{1}\rangle}, \]
\[ Q_{2}=\frac{\langle2A_{11}^{1}u^{1d}+A_{12}^{1}u^{2d}+A_{13}^{1}u^{3d}+l_{1}^{1},A_{13}^{1}u^{1d}+A_{23}^{1}u^{2d}+2A_{33}^{1}u^{3d}+l_{1}^{1} \rangle}{\langle 2A_{11}^{1}u^{1d}+A_{12}^{1}u^{2d}+A_{13}^{1}u^{3d}+l_{1}^{1},2A_{11}^{1}u^{1d}+A_{12}^{1}u^{2d}+A_{13}^{1}u^{3d}+l_{1}^{1}\rangle}.  \]

Substituting the fixed leader's optimal affine reverse Stackelberg strategy
 $ u^{1}=\gamma^{1*}(u^{2},u^{3}) $ in $ J_{3}(u^{1},u^{2},u^{3}) $
the supporting hyperplane for
\begin{equation*}
\bar{W}^{3d}= \{(u^{2},u^{3})\in \Omega_{2}\times\Omega_{3}~\vert~~\bar{J}_{3}(u^{2},u^{3})\leq \bar{J}_{3}(u^{2d},u^{3d})\},
\end{equation*}
is given by	
\begin{equation*}
\langle\nabla_{u^{2}}\bar{J}_{3}(u^{2d},u^{3d}), u^{2}-u^{2d}\rangle+\langle\nabla_{u^{3}}\bar{J}_{3}(u^{2d},u^{3d}),u^{3}-u^{3d}\rangle=0.
\end{equation*}	

The middle level player reacts to the leader's reverse Stackelberg  strategy Eq. (\ref{g81}) by constructing $  \gamma^{2*} $ based on Eq. (\ref{EEx2}) as
\begin{equation*}
\gamma^{2*}(u^{3})   = u^{2d}- \frac{\bar{u}_{321}}{\langle \bar{u}_{321},\bar{u}_{321}\rangle}\langle\bar{u}_{331},u^{3}-u^{3d}\rangle,
\end{equation*}
provided that  $ \bar{u}_{332}\neq 0 $ and 	
 \begin{equation*}
	\begin{split}
		\bar{u}_{321}=	 A_{12}^{3}u^{1d}+2A_{22}^{3}u^{2d}+A_{23}^{3}u^{3d}+l_{2}^{3}-Q_{1}(2A_{1d}^{3}u^{1d}+A_{12}^{3}u^{2d}+A_{13}^{3}u^{3d}+l_{1}^{3}),\\		
	\bar{u}_{331}=	A_{13}^{3}u^{1d}+A_{23}^{3}u^{2d}+2A_{33}^{3}u^{3d}+l_{3}^{3}-Q_{2}(A_{13}^{3}u^{1d}+A_{23}^{3}u^{2d}+2A_{33}^{3}u^{3d}+l_{3}^{3}).
		\end{split}	
\end{equation*}

 \subsection{General Characterization}\label{subsect:char2}

 	In Subsection \ref{subsect:char1}, we briefly discussed construction of affine reverse Stackelberg strategies for trilevel games based on the Fr\'{e}chet derivatives of objective functions of followers. However, as illustrated in the following trilevel game, it is possible that the leader can have an unlimited number of strategies to achieve his/her team optimal solution. So, in this section we present a method to construct multiple optimal affine reverse Stackelberg strategies of the leader.     	
 	
 	\begin{example}\label{Ex:3}
 		Consider a three person trilevel game with decision spaces $ u^{1}\in \R^{2},u^{2}\in \R $ and $ u^{3}\in \R $ characterized by
 		\begin{equation}\label{ge1}
 		\begin{split}
 		&J_{1}(u^{1},u^{2},u^{3}) = (u^{1}_{1})^{2}+(u^{1}_{2})^{2}+(u^{2})^{2}+(u^{3})^{2}+5u^{1}_{1}+3u^{1}_{2}+u^{2} + u^{3},\\
 		&J_{2}(u^{1},u^{2},u^{3}) = (u^{1}_{1})^{2}+(u^{1}_{2})^{2}+(u^{2})^{2}+(u^{3})^{2}+ 3u^{2},\\
 		&J_{3}(u^{1},u^{2},u^{3}) = (u^{1}_{1})^{2}+(u^{1}_{2})^{2}+(u^{2})^{2}+(u^{3})^{2},
 		\end{split}
 		\end{equation}
 	where $ u^{1}=\left(
 	\begin{matrix}
 		u^{1}_{1} \\
 		u^{1}_{2}
 	\end{matrix}\right) $.
 	\end{example}
 	
 	In this example the leader's desired equilibrium  is $ (\frac{-5}{2},\frac{-3}{2},\frac{-1}{2},\frac{-1}{2}) $ and the reverse Stackelberg strategy of the leader is	
 	\begin{equation}\label{ge2}
 	\gamma^{1}=\left(
 	\begin{matrix}
 	\frac{-5}{2} \\
 	\frac{-3}{2}
 	\end{matrix}
 	\right) + \left(
 	\begin{matrix}
 	\frac{1}{5}(2-3t_{1})\\
 	t_{1}		
 	\end{matrix}
 	\right)(u^{2}+\frac{1}{2})+ \left(
 	\begin{matrix}
 	-\frac{1}{5}(1+3t_{2})\\
 	t_{2}		
 	\end{matrix}
 	\right)(u^{3}+\frac{1}{2}),
 	\end{equation}
 	where $ t_{1}$ and $ t_{2}$ are free parameters.
 Therefore, there are  infinitely many reverse Stackelberg strategies for this problem. Note that, the characterization of strategies that is used in Subsection \ref{subsect:char1} only yields a singe solution if we apply it also to this example.

 Here, results in  \cite{groot2016optimal} for bi-level games were extended to derive  general characterizations of leader's affine reverse Stackelberg strategies for trilevel games.  We propose the following leader's function.

 \begin{equation}\label{g1}
 \gamma^{1}= u^{1d}- R_{1}s_{1}- R_{1}'s_{2}
 \end{equation}
 where $ R_{1}, R_{1}',s_{1}$ and  $ s_{2} $ are matrices of appropriate dimensions  satisfying

 \begin{equation}\label{g2}
 \left(\begin{matrix}
 u^{1}\\
 u^{2}
 \end{matrix}\right)=\left(\begin{matrix}
 u^{1d}\\
 u^{2d}
 \end{matrix}\right)+\left(\begin{matrix}
 R_{1}\\
 R_{2}
 \end{matrix}\right)s_{1},
 \end{equation}
 \begin{equation}\label{g3}
 \left(\begin{matrix}
 u^{1}\\
 u^{3}
 \end{matrix}\right)=\left(\begin{matrix}
 u^{1d}\\
 u^{3d}
 \end{matrix}\right)+\left(\begin{matrix}
 R_{1}'\\
 R_{3}
 \end{matrix}\right)s_{2},
 \end{equation}
 provided that $ R_{2} $ and $ R_{3} $ are nonsingular. Solving for $ s_{1} $ and $ s_{2} $ from Eqs. (\ref{g2}) and (\ref{g3}), Eq. (\ref{g1}) can be written as
 	\begin{equation}\label{g4}
 u^{1}:= \gamma^{1}(u^{2},u^{3})= u^{1d}- R_{1}R_{2}^{-1}(u^{2}-u^{2d})- R_{1}'R_{3}^{-1}(u^{3}-u^{3d}).
 \end{equation}

 In order to explore the general characterization of $ \gamma^{1} $ in Eqs. (\ref{g1})-(\ref{g4}), set $ R^{1}=[R_{1}^\top \quad R_{2}^\top], R^{1}\in \R^{(m_{1}+m_{2})\times m_{2}}$ and $ R^{2}=[R_{1}'^\top \quad R_{3}^\top], R^{2}\in \R^{(m_{1}+m_{3})\times m_{3}}$.

  \begin{lemma}\label{lemma:4}
  	If the leader function $ \gamma^{1} $ given in Eq. (\ref{g4}) is optimal for $  R^{1}=[R_{1}^\top\quad R_{2}^\top] $ and  $ R^{2}=[R_{1}'^\top\quad R_{3}^\top] $, the following conditions hold.
  	\begin{enumerate}
  		\item The matrices  $  R^{1} $ and  $ R^{2}$ satisfy
  		\begin{eqnarray}
  	&	[\nabla_{u^{1}} J_{2}(u^{1d},u^{2d},u^{3d}) \quad \nabla_{u^{2}} J_{2}(u^{1d},u^{2d},u^{3d})]^\top R^{1}=0,\label{gm1}  \\  & [\nabla_{u^{1}} J_{2}(u^{1d},u^{2d},u^{3d}) \quad \nabla_{u^{3}} J_{2}(u^{1d},u^{2d},u^{3d})]^\top R^{2}=0 \label{gm2}.
  		\end{eqnarray}
  		\item The columns of $ R_{2} $  and $ R_{3} $ should form bases for $ \Omega_{2} $ and $ \Omega_{3} $ respectively, i.e., $ R_{2} $  and $ R_{3} $ are of full rank  matrices of size $ m_{2} $ and $ m_{3} $ respectively.
  	\end{enumerate}
  \end{lemma}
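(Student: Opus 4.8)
The plan is to read off both conditions from the geometry that forces a leader function to be optimal: by the construction behind Theorem~\ref{T41}, an optimal $\gamma^{1}$ must be a genuine affine map defined on all of $\Omega_{2}\times\Omega_{3}$, and its graph must lie on the (unique) supporting hyperplane $\Pi_{W^{2d}}$ of $W^{2d}$ at the desired point. Throughout I work under the standing assumptions of Theorem~\ref{T41}: $\Omega_{i}=\R^{m_{i}}$, $J_{2}$ is Fr\'echet differentiable at $p:=(u^{1d},u^{2d},u^{3d})$ with $\nabla_{u^{1}}J_{2}(p)\neq 0$, $W^{2d}$ is convex and locally strictly convex at $p$, and $p$ is the unique team optimum of $J_{1}$.

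The first step is to make the graph explicit. Writing the $\gamma^{1}$ of Eqs.~(\ref{g1})--(\ref{g4}) in parametric form through $s_{1},s_{2}$, the set $\text{Graph}(\gamma^{1})$ is the affine variety through $p$ whose tangent space is spanned by the vectors $(R_{1}s_{1},R_{2}s_{1},0)$, $s_{1}\in\R^{m_{2}}$, and $(R_{1}'s_{2},0,R_{3}s_{2})$, $s_{2}\in\R^{m_{3}}$ (equivalently, by the columns of $[R_{1}^{\top}\ R_{2}^{\top}\ 0]^{\top}$ and $[(R_{1}')^{\top}\ 0\ R_{3}^{\top}]^{\top}$); overall signs are immaterial below. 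For part~(2), since an optimal $\gamma^{1}$ belongs to $\Gamma^{1}$ it is defined for every $(u^{2},u^{3})\in\Omega_{2}\times\Omega_{3}$, so $\text{Graph}(\gamma^{1})$ projects onto $\Omega_{2}\times\Omega_{3}$; by the description just given this projection equals $(u^{2d},u^{3d})+\big(\text{Range}(R_{2})\times\text{Range}(R_{3})\big)$, hence $\text{Range}(R_{2})=\R^{m_{2}}$ and $\text{Range}(R_{3})=\R^{m_{3}}$, and since $R_{2}$ is $m_{2}\times m_{2}$ and $R_{3}$ is $m_{3}\times m_{3}$ they are nonsingular and their columns form bases of $\Omega_{2}$ and $\Omega_{3}$. (This is exactly the invertibility needed to pass from Eqs.~(\ref{g1})--(\ref{g3}) to Eq.~(\ref{g4}).)

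For part~(1), I would first record that $\text{Graph}(\gamma^{1})\subseteq\Pi_{W^{2d}}$. By Eq.~(\ref{E3}) and uniqueness of the desired equilibrium, $J_{2}(\gamma^{1}(u^{2},u^{3}),u^{2},u^{3})>J_{2}(p)$ whenever $(u^{2},u^{3})\neq(u^{2d},u^{3d})$, so $\text{Graph}(\gamma^{1})\cap W^{2d}=\{p\}$; in particular the affine variety $\text{Graph}(\gamma^{1})$ contains no interior point of $W^{2d}$. Hence, by the reasoning behind Corollary~\ref{T3} (Lemma~\ref{T1} together with the uniqueness of the supporting hyperplane at the smooth boundary point $p$), $\text{Graph}(\gamma^{1})$ lies on $\Pi_{W^{2d}}$, which by differentiability of $J_{2}$ is the hyperplane~(\ref{E7}) with normal $\big(\nabla_{u^{1}}J_{2}(p),\nabla_{u^{2}}J_{2}(p),\nabla_{u^{3}}J_{2}(p)\big)$. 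Substituting the two families of tangent vectors of $\text{Graph}(\gamma^{1})$ into (\ref{E7}) forces this normal to be orthogonal to $(R_{1}s_{1},R_{2}s_{1},0)$ for all $s_{1}$ and to $(R_{1}'s_{2},0,R_{3}s_{2})$ for all $s_{2}$, i.e.
\[
R_{1}^{\top}\nabla_{u^{1}}J_{2}(p)+R_{2}^{\top}\nabla_{u^{2}}J_{2}(p)=0,\qquad (R_{1}')^{\top}\nabla_{u^{1}}J_{2}(p)+R_{3}^{\top}\nabla_{u^{3}}J_{2}(p)=0,
\]
which, with $R^{1}=[R_{1}^{\top}\ R_{2}^{\top}]^{\top}$ and $R^{2}=[(R_{1}')^{\top}\ R_{3}^{\top}]^{\top}$, are exactly Eqs.~(\ref{gm1})--(\ref{gm2}).

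The step I expect to be the main obstacle is the implication ``$\gamma^{1}$ optimal $\Rightarrow$ $\text{Graph}(\gamma^{1})\subseteq\Pi_{W^{2d}}$'': one has to argue carefully that optimality (through Eq.~(\ref{E3}) and uniqueness of the team optimum) really does confine $\text{Graph}(\gamma^{1})\cap W^{2d}$ to the single boundary point $p$, and then that Fr\'echet differentiability of $J_{2}$ pins the supporting hyperplane supplied by Lemma~\ref{T1} down to the one in~(\ref{E7}). Once this is in hand, extracting (\ref{gm1})--(\ref{gm2}) and part~(2) is routine linear algebra.
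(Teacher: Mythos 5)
Your proof is correct and follows essentially the same route as the paper's: part (1) comes from requiring the graph of the affine strategy (\ref{g4}) to lie on the supporting hyperplane (\ref{E7}) and reading off that the coefficients of $u^{2}-u^{2d}$ and $u^{3}-u^{3d}$ must vanish, and part (2) from the full-dimensionality (surjectivity onto $\Omega_{2}\times\Omega_{3}$) required by Theorem \ref{T41}. The only difference is that you make explicit the step the paper leaves implicit, namely why optimality forces $\text{Graph}(\gamma^{1})\subseteq\Pi_{W^{2d}}$ via $\text{Graph}(\gamma^{1})\cap W^{2d}=\{(u^{1d},u^{2d},u^{3d})\}$ and Corollary \ref{T3}, which is a welcome clarification rather than a departure.
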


\begin{proof}
	\begin{enumerate}
   \item Substituting the proposed strategy Eq. (\ref{g4}) for the leader in Eq. (\ref{E7}) we have 	
    \begin{equation*}\label{gp1}
    	\begin{split}
    	\langle\nabla_{u^{1}}J_{2}(u^{1d},u^{2d},u^{3d}), R_{1}R_{2}^{-1}(u^{2}-u^{2d})
    	+ R_{1}'R_{3}^{-1}(u^{3}-u^{3d})\rangle\\+ \langle\nabla_{u^{2}}J_{2}(u^{1d},u^{2d},u^{3d}), u^{2}-u^{2d}\rangle  \qquad \qquad \qquad\\
    	+\langle\nabla_{u^{3}}J_{2}(u^{1d},u^{2d},u^{3d}),u^{3}-u^{3d}\rangle=0.\\
    	\end{split}
    \end{equation*}	
Then it follows that	
	\begin{equation*}
	\begin{split}
	\langle\nabla_{u^{1}}J_{2}(u^{1d},u^{2d},u^{3d}), R_{1}R_{2}^{-1}(u^{2}-u^{2d})\rangle \qquad \qquad \qquad\\+ \langle\nabla_{u^{1}}J_{2}(u^{1d},u^{2d},u^{3d}),R_{1}'R_{3}^{-1}(u^{3}-u^{3d})\rangle \qquad\qquad\\+ \langle\nabla_{u^{2}}J_{2}(u^{1d},u^{2d},u^{3d}), u^{2}-u^{2d}\rangle \qquad \qquad \\+\langle\nabla_{u^{3}}J_{2}(u^{1d},u^{2d},u^{3d}),u^{3}-u^{3d}\rangle=0.\\
	\end{split}
	\end{equation*}	
	Thus
	\begin{equation*}
	\begin{split}
		\langle (R_{1}R_{2}^{-1})^\top\nabla_{u^{1}}J_{2}(u^{1d},u^{2d},u^{3d}), (u^{2}-u^{2d})\rangle \qquad \qquad\qquad \\+ \langle (R_{1}'R_{3}^{-1})^\top\nabla_{u^{1}}J_{2}(u^{1d},u^{2d},u^{3d}),(u^{3}-u^{3d})\rangle \qquad  \\+ \langle\nabla_{u^{2}}J_{2}(u^{1d},u^{2d},u^{3d}), u^{2}-u^{2d}\rangle \qquad \qquad \\+\langle\nabla_{u^{3}}J_{2}(u^{1d},u^{2d},u^{3d}),u^{3}-u^{3d}\rangle\qquad\\		
	=\langle(R_{1}R_{2}^{-1})^\top\nabla_{u^{1}}J_{2}(u^{1d},u^{2d},u^{3d})\qquad\qquad\qquad \qquad\qquad\\+\nabla_{u^{2}}J_{2}(u^{1d},u^{2d},u^{3d}), (u^{2}-u^{2d})\rangle\qquad \qquad\qquad\\+ \langle (R_{1}'R_{3}^{-1})^\top\nabla_{u^{1}}J_{2}(u^{1d},u^{2d},u^{3d})\qquad \qquad\qquad\\+\nabla_{u^{3}}J_{2}(u^{1d},u^{2d},u^{3d}),(u^{3}-u^{3d})\rangle=0
	\end{split}
	\end{equation*}
	from which the first statement of the lemma follows.
	
	\item According to Theorem \ref{T41}, for an  affine leader's function $ \gamma^{1} $ of the form Eq. (\ref{g3}) to be admissible, $ R_{2} $ and $ R_{3} $ must be basis of  $ \Omega_{2} $ and $ \Omega_{3} $ respectively.
\end{enumerate}
\end{proof}
\begin{lemma}
	If there exists an optimal affine mapping $ \gamma^{1} $ characterized by Eq. (\ref{g4}), one can select $ R_{2}= I_{m_{2}}$ and $ R_{3}= I_{m_{3}}$, without any loss of generality.
\end{lemma}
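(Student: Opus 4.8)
The plan is to exploit the fact that the leader's map in Eq.~(\ref{g4}) depends on the four matrices $R_{1},R_{2},R_{1}',R_{3}$ only through the two products $P_{1}:=R_{1}R_{2}^{-1}$ and $P_{2}:=R_{1}'R_{3}^{-1}$, so the parametrization (\ref{g1})--(\ref{g4}) carries a harmless redundancy that can be removed by normalizing $R_{2}$ and $R_{3}$. The first step is to note that these products are well defined: if $\gamma^{1}$ of the form (\ref{g4}) is optimal, then by part~2 of Lemma~\ref{lemma:4} the blocks $R_{2}\in\R^{m_{2}\times m_{2}}$ and $R_{3}\in\R^{m_{3}\times m_{3}}$ are of full rank, hence invertible, so $R_{2}^{-1}$ and $R_{3}^{-1}$ exist.

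Next I would introduce the normalized data $\widehat{R}_{1}:=P_{1}$, $\widehat{R}_{2}:=I_{m_{2}}$, $\widehat{R}_{1}':=P_{2}$, $\widehat{R}_{3}:=I_{m_{3}}$ and check that it produces exactly the same leader function. Indeed $\widehat{R}_{1}\widehat{R}_{2}^{-1}=P_{1}=R_{1}R_{2}^{-1}$ and $\widehat{R}_{1}'\widehat{R}_{3}^{-1}=P_{2}=R_{1}'R_{3}^{-1}$, so feeding the hatted matrices into (\ref{g4}) returns $u^{1d}-P_{1}(u^{2}-u^{2d})-P_{2}(u^{3}-u^{3d})=\gamma^{1}(u^{2},u^{3})$ for all $(u^{2},u^{3})\in\Omega_{2}\times\Omega_{3}$. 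The underlying affine relations (\ref{g2}) and (\ref{g3}) are also unchanged: since $R_{2}$ is invertible, the set parametrized by $\begin{pmatrix}R_{1}\\ R_{2}\end{pmatrix}s_{1}$ coincides with $\{(u^{1},u^{2})\,:\,u^{1}=u^{1d}+R_{1}R_{2}^{-1}(u^{2}-u^{2d})\}$, which is the set parametrized by $\begin{pmatrix}\widehat{R}_{1}\\ I_{m_{2}}\end{pmatrix}\widehat{s}_{1}$ with $\widehat{s}_{1}=u^{2}-u^{2d}$, and likewise for (\ref{g3}) with $R_{3}$ and $\widehat{R}_{1}'$.

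It then remains to confirm that the normalized data still meets the requirements of Lemma~\ref{lemma:4}. Part~2 is immediate, since $I_{m_{2}}$ and $I_{m_{3}}$ are full rank and their columns are bases of $\Omega_{2}$ and $\Omega_{3}$. For part~1, writing $R^{1}$ and $R^{2}$ for the block matrices of Lemma~\ref{lemma:4} and observing that $R^{1}R_{2}^{-1}=\begin{pmatrix}\widehat{R}_{1}\\ I_{m_{2}}\end{pmatrix}$ and $R^{2}R_{3}^{-1}=\begin{pmatrix}\widehat{R}_{1}'\\ I_{m_{3}}\end{pmatrix}$ are precisely the block matrices assembled from the normalized data, I would right-multiply (\ref{gm1}) by $R_{2}^{-1}$ and (\ref{gm2}) by $R_{3}^{-1}$ to obtain
\[
[\,\nabla_{u^{1}} J_{2}(u^{1d},u^{2d},u^{3d})\ \ \nabla_{u^{2}} J_{2}(u^{1d},u^{2d},u^{3d})\,]^{\top}R^{1}R_{2}^{-1}=0
\]
together with the analogous identity with $R^{2}R_{3}^{-1}$ and $\nabla_{u^{3}} J_{2}$.

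Putting the three steps together shows that any optimal $\gamma^{1}$ of the form (\ref{g4}) is realized equally well by data with $R_{2}=I_{m_{2}}$ and $R_{3}=I_{m_{3}}$, which is the asserted ``without loss of generality''. The only non-cosmetic point --- which I would flag as the step to get right rather than a genuine obstacle --- is verifying that the constraints (\ref{gm1})--(\ref{gm2}) survive normalization; this reduces to the elementary fact that right-multiplying the identity $[\nabla_{u^{1}} J_{2}\ \ \nabla_{u^{2}} J_{2}]^{\top}R^{1}=0$ by the invertible matrix $R_{2}^{-1}$ keeps it an identity, and $R^{1}R_{2}^{-1}$ is exactly the normalized block.
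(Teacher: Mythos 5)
Your proof is correct and takes essentially the same route as the paper's: check that the identity selection meets the full-rank requirement of Lemma~\ref{lemma:4} and that the hyperplane condition of its part~1 still holds. Your write-up simply makes explicit what the paper's terse proof leaves implicit, namely that the map in Eq.~(\ref{g4}) depends on the data only through the products $R_{1}R_{2}^{-1}$ and $R_{1}'R_{3}^{-1}$, and that the constraints (\ref{gm1})--(\ref{gm2}) are preserved under right-multiplication by the invertible matrices $R_{2}^{-1}$ and $R_{3}^{-1}$.
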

\begin{proof}
The selection $ R_{2}= I_{m_{2}}$ and $ R_{3}= I_{m_{3}}$ satisfies full dimensionality requirement mentioned in Lemma \ref{lemma:4}. Now, we prove under a selection $ R_{2}= I_{m_{2}}$ and $ R_{3}= I_{m_{3}}$ whether the affine strategy given in Eq. (\ref{g4}) lies on the supporting hyperplane determined by Eq. (\ref{E7}). This simply  follows from the proof of Lemma \ref{lemma:4} by substituting $ I_{m_{2}} $ and $ I_{m_{3}} $ for $ R_{2}$ and $ R_{3}$ respectively.

\end{proof}

When   $ R_{2}= I_{m_{2}}$ and $ R_{3}= I_{m_{3}}$, the following equations can be derived from Eqs. (\ref{gm1}) and (\ref{gm2}).
\begin{eqnarray}
 \nabla_{u^{1}} J_{2}(u^{1d},u^{2d},u^{3d})R_{1} = -\nabla_{u^{2}} J_{2}(u^{1d},u^{2d},u^{3d})^\top I_{m_{2}}\label{g5},\\
\nabla_{u^{1}} J_{2}(u^{1d},u^{2d},u^{3d})R_{1}' = -\nabla_{u^{3}} J_{2}(u^{1d},u^{2d},u^{3d})^\top I_{m_{3}} \label{g6}.
\end{eqnarray}

We conclude the general characterization of leader's reverse Stackelberg strategy by the following theorem  that follows from Theorem \ref{T41}, Lemma \ref{T12} and Lemma \ref{lemma:4}. Moreover, equations that help to find matrices $ R_{1}$ and $ R_{1}'$ satisfying Eqs. (\ref{g5}) and (\ref{g6}) under fixed basis matrices  $ R_{2}= I_{m_{2}}$ and $ R_{3}= I_{m_{3}}$ are given.

\begin{theorem}\label{Th:4}
	Let $ \Omega_{1}= \R^{m_{1}}, \Omega_{2}= \R^{m_{2}}, \Omega_{3}= \R^{m_{3}} $ and conditions of Theorem \ref{T41} are satisfied. Then an affine leader's function of the form Eq. (\ref{g4}) exists where the matrices $ R_{1}$ and $ R_{1}'$ belong to affine spaces of the form
	\begin{equation}\label{g7}
	\begin{split}
   \mathcal{R}_{1}=\{R_{1}=R_{11}^{0} + B_{N} t_{1}\},	\\
   \mathcal{R}_{1}'= \{R_{1}'=R_{12}^{0} + B_{N} t_{2}\},	
	\end{split}
	\end{equation}
	where $ R_{11}^{0} $ and $ R_{12}^{0} $ are particular solutions of (\ref{g5}) and (\ref{g6}) respectively and $ B_{N}\in \text{Null}\{\nabla_{u^{1}} J_{2}(u^{1d},u^{2d},u^{3d})\} $.
\end{theorem}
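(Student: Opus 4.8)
The plan is to combine the existence statement of Theorem~\ref{T41} with the first–order characterization of Lemma~\ref{lemma:4}, and then to read off the solution set of the resulting linear matrix equations as an explicit affine family parametrized by free matrices.

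First I would apply Theorem~\ref{T41}: under the stated hypotheses there is an optimal affine leader map $\gamma^{1}\colon\Omega_{2}\times\Omega_{3}\to\Omega_{1}$ realizing $(u^{1d},u^{2d},u^{3d})$. Being affine with $\gamma^{1}(u^{2d},u^{3d})=u^{1d}$ (Eq.~(\ref{E1})), it can be written in the form~(\ref{g4}), and by the lemma immediately preceding this theorem one may take $R_{2}=I_{m_{2}}$ and $R_{3}=I_{m_{3}}$ without loss of generality. Optimality of $\gamma^{1}$ then forces, through Lemma~\ref{lemma:4}, conditions (\ref{gm1})--(\ref{gm2}), which for $R_{2}=I_{m_{2}}$, $R_{3}=I_{m_{3}}$ are precisely (\ref{g5})--(\ref{g6}).

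Next I would solve these two linear systems for $R_{1}\in\R^{m_{1}\times m_{2}}$ and $R_{1}'\in\R^{m_{1}\times m_{3}}$. Write $g=\nabla_{u^{1}}J_{2}(u^{1d},u^{2d},u^{3d})$, which is nonzero by hypothesis. Column by column, each of (\ref{g5}) and (\ref{g6}) amounts to prescribing the value of the linear functional $g$ on the columns of $R_{1}$ (resp. $R_{1}'$); since $g\neq 0$ this is solvable, e.g. by the rank–one particular solutions $R_{11}^{0}=g\,b_{2}^{\top}/\langle g,g\rangle$ and $R_{12}^{0}=g\,b_{3}^{\top}/\langle g,g\rangle$ with $b_{2}=-\nabla_{u^{2}}J_{2}(u^{1d},u^{2d},u^{3d})$, $b_{3}=-\nabla_{u^{3}}J_{2}(u^{1d},u^{2d},u^{3d})$. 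The associated homogeneous equation requires every column of the correction matrix to lie in $\text{Null}(g)$, a subspace of dimension $m_{1}-1$; fixing $B_{N}$ whose columns form a basis of $\text{Null}(g)$, every homogeneous solution has the form $B_{N}t$ for a matrix $t$ of free parameters. Hence the solution sets of (\ref{g5}) and (\ref{g6}) are exactly the affine spaces $\mathcal{R}_{1}=\{R_{11}^{0}+B_{N}t_{1}\}$ and $\mathcal{R}_{1}'=\{R_{12}^{0}+B_{N}t_{2}\}$ of (\ref{g7}); in particular there is a single strategy when $m_{1}=1$ and infinitely many when $m_{1}\geq 2$.

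Finally, to justify the converse — that every choice of $R_{1}\in\mathcal{R}_{1}$, $R_{1}'\in\mathcal{R}_{1}'$ (with $R_{2}=I_{m_{2}}$, $R_{3}=I_{m_{3}}$) indeed yields an admissible optimal $\gamma^{1}$ — I would retrace the computation in the proof of Lemma~\ref{lemma:4} to see that the graph of such a $\gamma^{1}$ lies on the supporting hyperplane $\Pi_{W^{2d}}$; local strict convexity of $W^{2d}$ then gives $\text{Graph}(\gamma^{1})\cap W^{2d}=\{(u^{1d},u^{2d},u^{3d})\}$ (Corollary~\ref{T4}), so by Proposition~\ref{T7} the desired point minimizes $J_{2}$ over $\text{Graph}(\gamma^{1})$, i.e. Eq.~(\ref{E3}) holds, and Assumption~\ref{Assum:3} (concretely Theorem~\ref{T16}, together with Lemma~\ref{T12} and Propositions~\ref{T9},~\ref{T15}) lets the middle level player respond so that $(u^{1d},u^{2d},u^{3d})$ is realized. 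The main obstacle is the bookkeeping of the third step: correctly identifying the full solution set of the matrix equations (\ref{g5})--(\ref{g6}) as an affine space, checking that the parameter matrices $t_{1},t_{2}$ sweep out all homogeneous solutions, and keeping the row/column conventions for $\nabla_{u^{1}}J_{2}$ straight; the rest is an assembly of Theorem~\ref{T41}, Lemma~\ref{lemma:4} and the local strict convexity corollaries.
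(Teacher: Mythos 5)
Your proposal is correct and follows essentially the same route as the paper: reduce to the linear systems (\ref{g5})--(\ref{g6}) via Lemma~\ref{lemma:4} with $R_{2}=I_{m_{2}}$, $R_{3}=I_{m_{3}}$, use $\nabla_{u^{1}}J_{2}(u^{1d},u^{2d},u^{3d})\neq 0$ to get solvability, and describe the solution set as a particular solution plus the null-space term $B_{N}t$, which is exactly Eq.~(\ref{g7}). You are in fact more explicit than the paper's terse proof (rank-one particular solutions, the dimension count $m_{1}-1$, and the converse check via Corollary~\ref{T4}, Proposition~\ref{T7} and Theorem~\ref{T16} that every member of the family is optimal), but the underlying argument is the same.
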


\begin{proof}
Since $ \nabla_{u^{1}} J_{2}(u^{1d},u^{2d},u^{3d})\neq 0 $, Eqs. (\ref{g5}) and (\ref{g6})	can be solved as a system of equalities. Moreover, for each zero element $  \nabla_{u^{1,j}} J_{2}(u^{1d},u^{2d},u^{3d}), j= 1, \ldots, m_{2} $  the corresponding entry $ R^{0}_{1j} $ is free. Therefore, Eq. (\ref{g7}) is a possible solution of Eqs. (\ref{g5}) and (\ref{g6}).
\end{proof}

Now, given the leader's optimal reverse Stackelberg strategy  $ \gamma^{1*} $, the objective functions $ J_{2}(\gamma^{1*},u^{2},u^{3}) $ and $ J_{3}(\gamma^{1*},u^{2},u^{3}) $ become a function of $ u^{2} $ and $ u^{3} $ only, and hence we denote the later by $ \bar{J}_{3}(u^{2},u^{3}) $ for which a corresponding result for two level problem works.

\begin{example}\label{Ex:4}
	In Example \ref{Ex:3} the leader's reverse Stackelberg strategy depends on the free parameters $ t_{1} $ and $ t_{2} $. For example, fixing $ t_{1}=t_{2}=0 $ in Eq. (\ref{ge2}), the leader's reverse Stackelberg strategy becomes
	
	 \begin{equation}\label{g10}
	\left(\begin{matrix}
	u^{1}_{1}\\
	u^{1}_{2}
	\end{matrix}\right)=\left(\begin{matrix}
	\gamma^{1}_{1}\\
	\gamma^{1}_{2}
	\end{matrix}\right)=\left(\begin{matrix}
\frac{1}{5}(2u^{2}-u^{3}-12)\\
	\frac{-3}{2}
	\end{matrix}\right).
	\end{equation}
Substituting Eq. (\ref{g10}) for $ J_{2} $ and $ J_{3} $ in Eq. (\ref{ge1})	we get
\begin{equation}\label{g11}
\begin{split}
&J_{2}(\gamma^{1*},u^{2},u^{3}) = (\frac{1}{5}(2u^{2}-u^{3}-12))^{2}+(	\frac{-3}{2})^{2}+(u^{2})^{2}+(u^{3})^{2}+ 3u^{2},\\
&J_{3}(\gamma^{1*},u^{2},u^{3}) = (\frac{1}{5}(2u^{2}-u^{3}-12))^{2}+(	\frac{-3}{2})^{2}+(u^{2})^{2}+(u^{3})^{2}.
\end{split}
\end{equation}

Now Eq. (\ref{g11}) is a two level problem for which the middle level player's reverse Stackelberg strategy is
\begin{equation*}
\gamma^{2*}=-\frac{1}{2}.
\end{equation*}
\end{example}

\begin{remark}
The choice of  $ t_{1}=t_{2}=0 $ for the free parameters, in Example \ref{Ex:4} above, is random. However, the leader can choose the values of  $ t_{1}$ and $t_{2} $ by considering secondary optimization problem like optimizing the loss incurred, to the leader and punishment to the followers, in case followers deviate from the desired solution.
\end{remark}

\section{Constrained Decision Space}
In this section, we present a constrained trilevel reverse Stackelberg games. Suppose  that $ \Omega_{1}\times\Omega_{2}\times\Omega_{3}\subsetneq \R^{m_{1}}\times \R^{m_{2}} \times\R^{m_{3}} $ and $ \Omega_{1},\Omega_{2},\Omega_{3}$ are compact. In the presence of constraints, the leader solves constrained nonlinear  optimization problem to determine his/her desired equilibrium $ (u^{1d},u^{2d},u^{3d}) $. To utilize the geometric existence theorems presented in  Sections \ref{sect:existance} and \ref{sect:characSt}  for a constrained decision space it should be verified  that supporting hyperplanes $ \Pi_{ W^{2d}}(u^{1d},u^{2d},u^{3d})$ and $\Pi_{ W^{3d}}(u^{1d},u^{2d},u^{3d}) $  containing affine strategies  $ \gamma^{1} $ and $ \gamma^{2} $ belonged to the constrained decision space $ \Omega_{1}\times\Omega_{2}\times\Omega_{3} $ .
 Therefore, existence  conditions of optimal affine reverse Stackelberg strategies presented in  Sections \ref{sect:existance} and \ref{sect:characSt} for  unconstrained version of the trilevel game  provides only necessary conditions in case of a constrained game.

In  \cite{groot2016optimal}, it is reported that proving conditions for existence of leader's reverse Stackelberg strategy  for bilevel games in constrained decision space is a  challenging task. In the same article, the authors were able to filter feasible optimal strategies of a constrained game among infinitely many reverse Stackelberg strategies in unconstrained case of the bilevel problem. We shall extend this concept to the trilevel (and hence to the general $n$-level) case

 Consider a trilevel problem with linear constraints:
\begin{equation}\label{cons:1}
\begin{split}
	& \min_{u^{1}\in \Omega_{1}}	J_{1}(u^{1},u^{2},u^{3}),\\
	&\quad\min_{u^{2}\in \Omega_{2}} J_{2}(u^{1},u^{2},u^{3}),\\
	&\qquad\min_{u^{3}\in \Omega_{3}} J_{3}(u^{1},u^{2},u^{3}),\\
		&\text{subject to:}	\\
	&	A^{1}u^{1}  +A^2 u^2 + A^3u^3 \leq b^{1}, A^{i}\in \R^{k\times m_{i}}, b^{1}\in \R^{k}, \text{ for } i = 1,2,3.
\end{split}
\end{equation}
Here, since the constraints are linear the feasible set is convex.

The first task towards obtaining reverse Stackelberg strategy of the leader is the determination of a desired equilibrium. The leader searches for  a team solution of a constrained nonlinear optimization problem
\begin{equation}\label{cons:2}
	\begin{split}
	&	 \min_{(u^{1},u^{2},u^{3})\in \Omega_{1}\times\Omega_{2}\times\Omega_{3}}	J_{1}(u^{1},u^{2},u^{3}),\\
	&\text{subject to:}	\\
	&	A^{1}u^{1}  +A^2 u^2 + A^3u^3 \leq b^{1}, A^{i}\in \R^{k\times m_{i}}, b^{1}\in \R^{k}, \text{ for } i = 1,2,3.,
\end{split}
\end{equation}
which gives the desired equilibrium $ (u^{1d},u^{2d},u^{3d}) $ under appropriate conditions.

The next task will be to construct feasible affine reverse Stackelberg strategies that satisfy  Eqs.(\ref{E3})-(\ref{E2}). The strategy $ \gamma^{1} $ of the leader is feasible in a constrained decision space if its domain is $ \Omega_{2}\times \Omega_{3} $  and $ \gamma^{1}(\Omega_{2}\times \Omega_{3}) $ is a subset of $ \Omega_{1} $. Similarly, the corresponding strategy $ \gamma^{2} $  is feasible in a constrained decision space if its domain is $ \Omega_{3} $  and $ \gamma^{2}( \Omega_{3}) $ is a subset of $ \Omega_{2} $. The optimal affine strategies constructed for unconstrained version of the problem fulfill all these conditions except the condition for feasibility. Since  the desired equilibrium Eq. (\ref{cons:2}) is in the constrained decision space, strategies that are optimal for unconstrained game are also optimal for a constrained game provided that they are feasible. Therefore, feasible optimal solutions for a constrained game can be obtained from set of affine optimal solutions of the unconstrained version of the problem.

Given a set of  optimal affine  solutions for unconstrained trilevel reverse Stackelberg game,  we verify solvability of a constrained version of the game by determining feasible strategies that are admissible. Thus, leader's optimal strategy $ \gamma^{1} $ in the unconstrained game  is also admissible in a constrained game if it satisfies the condition that for all $ (u^{2},u^{3})\in \Omega_{2}\times\Omega_{3}$ there exists $ u^{1}\in \Omega_{1}$ such that $u^{1}= \gamma^{1}(u^{2},u^{3})$.  Therefore, optimal affine functions discussed in Section \ref{sect:characSt} for unconstrained game satisfying  $ \gamma^{1}(\Omega_{2}\times\Omega_{3})\subseteq \Omega_{1}$ are admissible for a constrained version of the game provided that $ \gamma^{2} $ exists. Furthermore, among set of optimal affine reverse Stackelberg strategies  $ \gamma^{2} $  for unconstrained game, those that satisfy $ \gamma^{2}(\Omega_{3})\subseteq \Omega_{2} $ are also optimal in a constrained game.

In what follows, we present some cases for which the existence of strategies can be verified based on our characterization of multiple optimal strategies in Section \ref{sect:characSt}.
\begin{itemize}
	\item
	If the leader's decision space is constrained by the  linear constraints
	\begin{equation}\label{cns:1}
		A^{1}u^{1}  \leq b^{1}, A^{1}\in \R^{k\times m_{1}}, b^{1}\in \R^{k},
	\end{equation}
	the set of optimal solutions of unconstrained version of the problem can be reduced by adding a constraint
	\begin{equation*}
		A^{1} [u^{1d}- R_{1}(u^{2}-u^{2d})- R_{1}'(u^{3}-u^{3d})]\leq b^{1}, A^{1}\in \R^{k\times m_{1}}, b^{1}\in \R^{k},
	\end{equation*}
	to the expression in Eq. (\ref{g7}).
	\item Suppose $ \Omega_{3}\subset\R^{m_{3}} $ is constrained but  $ \Omega_{1}=\R^{m_{1}},\Omega_{2}=\R^{m_{2}} $ are unconstrained. In this case if $ \gamma^{1}(\Omega_{2}\times\Omega_{3})\subseteq \Omega_{1} $ and $ \gamma^{2}(\Omega_{3})\subseteq \Omega_{2} $, all affine optimal strategies characterized in section \ref{sect:characSt} for unconstrained game are also feasible for constrained version of the game.
	
	\item If decision spaces of all the players are constrained by linear constraints, the feasible region is a convex  polyhedron
	in which affine mappings $ \gamma^{1} $ and $ \gamma^{2} $ preserve convexity. However,  one must verify the satisfaction of the conditions that $\gamma^{1}(\Omega_{2}\times\Omega_{3})\subseteq \Omega_{1} $ and $ \gamma^{2}(\Omega_{3})\subseteq \Omega_{2}$.
\end{itemize}

\section{Conclusion}

In a static multilevel  Stackelberg game, the decision process is sequential from the top $1^\textrm{st}$-level player to $2^\textrm{nd}, \ldots, (n-1)^\textrm{th}, n^\textrm{th}$-level player (the bottom follower). Players at each level optimize their own objective functions which is affected by actions of decision makers at other levels. Reverse Stackelberg strategy of the leader is a mapping from the followers' decision space to the leader's decision space enabling him/her to achieve a desired equilibrium. The results on the existence, and construction of such strategies in the current literature applies only to problems where objective functions of followers are strictly convex and provide only a single strategy to the leader.

This article formulates existence conditions of affine reverse Stackelberg strategy in multilevel static game where the sublevel sets of objective functions of followers' at the desired equilibrium are required to be connected. Moreover, the construction of leader's  multiple optimal  reverse Stackelberg strategies is developed. The attainment of more than one optimal strategy provides an opportunity to consider secondary optimization criteria as well as to solve a constrained game.

However, more research is still needed to address similar type problems with multiple decision entities that are acting according to Nash game at each level of hierarchy. The development of numerical solution techniques is also another area to be worked on. One can also consider development of  nonlinear reverse Stackelberg strategies which are more stable as compared to the affine strategies.

Another important direction for future work is  existence and construction of reverse Stackelberg strategy  for multilevel differential game, in which case the state evolves according to a differential equation and the  performance criteria is integral. In continuous time setting, the strategies of the leader and the middle level player should satisfy causality constraint to be  admissible as opposed to the static case.

%


\bibliographystyle{spmpsci}
\bibliography{TrilevelNEW1}

\begin{thebibliography}{10}
\providecommand{\url}[1]{{#1}}
\providecommand{\urlprefix}{URL }
\expandafter\ifx\csname urlstyle\endcsname\relax
  \providecommand{\doi}[1]{DOI~\discretionary{}{}{}#1}\else
  \providecommand{\doi}{DOI~\discretionary{}{}{}\begingroup
  \urlstyle{rm}\Url}\fi

\bibitem{bacsar1981equilibrium}
Ba{\c{s}}ar, T.: Equilibrium strategies in dynamic games with multi-levels of
  hierarchy.
\newblock Automatica \textbf{17}(5), 749--754 (1981).
\newblock \doi{10.1016/0005-1098(81)90022-4}

\bibitem{basar1981stochastic}
Ba{\c{s}}ar, T.: Stochastic multicriteria decision problems with multilevels of
  hierarchy.
\newblock IEEE Transactions on Automatic Control \textbf{26}(2), 549--553
  (1981).
\newblock \doi{10.1109/TAC.1981.1102666}

\bibitem{bacsar1998dynamic}
Ba{\c{s}}ar, T., Olsder, G.J.: Dynamic noncooperative game theory.
\newblock SIAM, Philadelphia, USA (1998).
\newblock \doi{10.1137/1.9781611971132}

\bibitem{basar1980performance}
Ba{\c{s}}ar, T., Sankur, B., Abut, H.: Performance bounds and optimal linear
  coding for discrete-time multichannel communication systems (corresp.).
\newblock IEEE Transactions on Information Theory \textbf{26}(2), 212--217
  (1980).
\newblock \doi{10.1109/TIT.1980.1056156}

\bibitem{basar1979new}
Ba{\c{s}}ar, T., Selbuz, H.: A new approach for derivation of closed-loop
  stackelberg strategies.
\newblock In: 1978 {IEEE} Conference on Decision and Control including the 17th
  Symposium on Adaptive Processes, pp. 1113--1118 (1978).
\newblock \doi{10.1109/CDC.1978.268106}

\bibitem{cansever1982minimum}
Cansever, D.H., Basar, T.: A minimum sensitivity approach to incentive design
  problems.
\newblock In: 1982 21st IEEE Conference on Decision and Control, pp. 158--163
  (1982).
\newblock \doi{10.1109/CDC.1982.268419}

\bibitem{ehtamo1989incentive}
Ehtamo, H., H{\"a}m{\"a}l{\"a}inen, R.P.: Incentive strategies and equilibria
  for dynamic games with delayed information.
\newblock Journal of Optimization Theory and Applications \textbf{63}, 355--369
  (1989)

\bibitem{groot2012existence}
Groot, N., De~Schutter, B., Hellendoorn, H.: Existence conditions for an
  optimal affine leader function in the reverse stackelberg game.
\newblock IFAC Proceedings Volumes \textbf{45}(25), 56--61 (2012).
\newblock \doi{10.3182/20120913-4-IT-4027.00016}

\bibitem{groot2014toward}
Groot, N., De~Schutter, B., Hellendoorn, H.: Toward system-optimal routing in
  traffic networks: A reverse stackelberg game approach.
\newblock IEEE Transactions on Intelligent Transportation Systems
  \textbf{16}(1), 29--40 (2015).
\newblock \doi{10.1109/TITS.2014.2322312}

\bibitem{groot2016optimal}
Groot, N., De~Schutter, B., Hellendoorn, H.: Optimal affine leader functions in
  reverse stackelberg games.
\newblock Journal of Optimization Theory and Applications \textbf{168}(1),
  348--374 (2016).
\newblock \doi{10.1007/s10957-014-0694-4}

\bibitem{groot2017hierarchical}
Groot, N., Zaccour, G., De~Schutter, B.: Hierarchical game theory for
  system-optimal control: Applications of reverse stackelberg games in
  regulating marketing channels and traffic routing.
\newblock IEEE Control Systems Magazine \textbf{37}(2), 129--152 (2017).
\newblock \doi{10.1109/MCS.2016.2643260}

\bibitem{ho1981information}
Ho, Y.C., Luh, P., Muralidharan, R.: Information structure, stackelberg games,
  and incentive controllability.
\newblock IEEE Transactions on Automatic Control \textbf{26}(2), 454--460
  (1981).
\newblock \doi{10.1109/TAC.1981.1102652}

\bibitem{Abay_Semu_2013}
Kassa, A.M., Kassa, S.M.: A multi-parametric programming algorithm for special
  classes of non-convex multilevel optimization problems.
\newblock An International Journal of Optimization and Control: Theories and
  Applications \textbf{3}(2), 133--144 (2013).
\newblock \doi{10.11121/ijocta.01.2013.00156}

\bibitem{Abay_Semu_2016}
Kassa, A.M., Kassa, S.M.: A branch-and-bound multi-parametric programming
  approach for general non-convex multilevel optimization with polyhedral
  constraints.
\newblock Journal of Global Optimization \textbf{64}(4), 745--764 (2016).
\newblock \doi{10.1007/s10898-015-0341-0}

\bibitem{Abay_Semu_2017}
Kassa, A.M., Kassa, S.M.: Deterministic solution approach for some classes of
  nonlinear multilevel programs with multiple follower.
\newblock Journal of Global Optimization \textbf{68}(4), 729--747 (2017).
\newblock \doi{10.1007/s10898-017-0502-4}

\bibitem{Semu_2018}
Kassa, S.M.: Three-level global resource allocation model for {HIV} control:
  {\it A hierarchical decision system approach}.
\newblock Mathematical Biosciences and Engineering \textbf{15}(1), 255--273
  (2018).
\newblock \doi{10.3934/mbe.2018011}

\bibitem{luenberger1969optimization}
Luenberger, D.G.: Optimization by Vector Space Methods, 1st edn.
\newblock John Wiley \& Sons, Inc., USA (1997)

\bibitem{mitiku2007multilevel}
Mitiku, S.: A multilevel programming approach to decentralized (or
  hierarchical) resource allocation systems.
\newblock In: PAMM: Proceedings in Applied Mathematics and Mechanics, vol.~7,
  pp. 2060003--2060004. WILEY-VCH Verlag GmbH \& Co. KGaA, Weinheim (2007).
\newblock \doi{10.1002/pamm.200700127}

\bibitem{mizukami1989constructions}
Mizukami, K., Zhang, S., Xu, H.: Constructions of optimal affine incentive
  strategies in multilevel stackelberg games by a vector space approach.
\newblock Transactions of the Society of Instrument and Control Engineers
  \textbf{25}(3), 298--303 (1989)

\bibitem{olsder2009phenomena}
Olsder, G.J.: Phenomena in inverse stackelberg games, part 1: Static problems.
\newblock Journal of Optimization Theory and Applications \textbf{143},
  589--600 (2009).
\newblock \doi{10.1007/s10957-009-9573-9}

\bibitem{scattolini2009architectures}
Scattolini, R.: Architectures for distributed and hierarchical model predictive
  control -- a review.
\newblock Journal of Process Control \textbf{19}(5), 723--731 (2009).
\newblock \doi{10.1016/j.jprocont.2009.02.003}

\bibitem{scattolini2007hierarchical}
Scattolini, R., Colaneri, P.: Hierarchical model predictive control.
\newblock In: 2007 46th IEEE Conference on Decision and Control, pp. 4803--4808
  (2007).
\newblock \doi{10.1109/CDC.2007.4434079}

\bibitem{von1934marktform}
von Stackelberg, H.: Marktform und gleichgewicht.
\newblock Wien und Berlin: Verlag von Julius Springer (1934)

\bibitem{stankova2009stackelberg}
Stankova, K.: On Stackelberg and Inverse Stackelberg Games and their
  Applications in the Optimal Toll Design Problem, the Energy Markets
  Liberalization Problem.
\newblock Delft Institute of Applied mathematics, Phd Thesis Series (2009).
\newblock
  \urlprefix\url{http://resolver.tudelft.nl/uuid:8a7342eb-cf40-459b-8adc-fa4806f4e024}

\bibitem{zheng1982existence}
Zheng, Y.P., Ba{\c{s}}ar, T.: Existence and derivation of optimal affine
  incentive schemes for stackelberg games with partial information: A geometric
  approach.
\newblock International Journal of Control \textbf{35}(6), 997--1011 (1982).
\newblock \doi{10.1080/00207178208922667}

\bibitem{zheng1984stackelberg}
Zheng, Y.P., Basar, T., Cruz, J.B.: Stackelberg strategies and incentives in
  multiperson deterministic decision problems.
\newblock IEEE Transactions on Systems, Man, and Cybernetics
  \textbf{SMC-14}(1), 10--24 (1984).
\newblock \doi{10.1109/TSMC.1984.6313265}

\end{thebibliography}

%
%
%
%

\end{document}